\tikzset{shorten <>/.style={shorten >=#1,shorten <=#1}}
\newcounter{nodemaker}
\tikzset{Rightarrow/.style={double equal sign distance,>={Implies},->},
triple/.style={-,preaction={draw,Rightarrow}},
quadruple/.style={preaction={draw,Rightarrow,shorten >=0pt},shorten >=1pt,-,double,double
distance=0.2pt}}
\tikzset{%
    symbol/.style={%
        draw=none,
        every to/.append style={%
            edge node={node [sloped, allow upside down, auto=false]{$#1$}}}
    }
}
\newtheorem{theorem}{Theorem}[subsection]
\newtheorem*{theorem*}{Theorem}
\newtheorem{proposition}[theorem]{Proposition}
\newtheorem{corollary}[theorem]{Corollary}
\newtheorem{corollary'}[theorem]{Corollary}
\newtheorem{lemma}[theorem]{Lemma}
\theoremstyle{definition}
\newtheorem{definition}[theorem]{Definition}
\newtheorem{example}[theorem]{Example}
\theoremstyle{definition}
\newtheorem{remark}[theorem]{Remark}
\theoremstyle{definition}
\newtheorem{division}[theorem]{}
\theoremstyle{definition}
\theoremstyle{definition}
\theoremstyle{definition}
\title{Morphisms and comorphisms of sites I \\
Double-categories of sites}
\author[1]{Olivia Caramello}
\author[1]{Axel Osmond}
\affil[1]{Istituto Grothendieck}
\date{May 2025}
\begin{document}

\maketitle

\begin{abstract}
    We arrange morphisms and comorphisms of sites as the horizontal and vertical cells of a double category of sites; using the formalism of extensions and restrictions of presheaves, we explains how one can define a sheafification double functor from this double category to the quintet double category of Grothendieck topoi. We describe properties of this double functor and recover some classical results of topos theory through a new notion of \emph{locally exact square}, generalizing exact squares in the presence of topologies. We also describe a 2-comonad on $\Cat$ for which lax morphisms of coalgebras are morphisms of sites and colax morphisms are comorphisms of sites, explaining the arrangement as a double category.  
\end{abstract}

\tableofcontents

\newpage

\section*{Introduction}

Geometric morphisms can be induced either from morphisms or comorphisms of sites, respectively in a contravariant and a covariant way; the first are characterized through a cover-preservation property (aside flatness), the second through a cover-lifting property. As both define a relevant notion of 1-cells between sites, two questions:\begin{itemize}
    \item is there a proper way to mix them altogether in a single categorical structure on sites, and if so, does it help understanding the reason for which we have those twin classes of functors rather than a single one?
    \item is it possible to subsume them into a single notion jointly generalizing the cover-preservation and cover-reflection into a single condition?
\end{itemize}

In this first part, we will explain how morphisms and comorphisms, though they do not compose with each other, can be arranged as the horizontal and vertical 1-cells of a double-category of sites. Relying on the formalism of extension-restriction applied to sieves seen as presheaves, we will then show how the sheaf-topos construction defines a horizontally contravariant, vertically covariant double-functor to the quintet double-category of topoi and discuss a few properties of this double-functor, as its relation to tabulators, or a conjoints-to-companions phenomenon.

Double-categories are a good environment to manipulate one the most expressive gadgets of category theory, the so called \emph{exact squares} introduced by \cite{guitart1980carrésexact}: those are lax squares whose corresponding extension-restriction square is invertible; in the context of sites, one can speak more generally of \emph{locally exact squares}, those that are sent to an invertible double cell by the sheafification double-functor. After giving an intrinsic characterization of those locally exact squares in terms of \emph{relative cofinality} à la \cite{caramello2020denseness}, will discuss a variety of examples in and recover several classical results of topos theory as local exactness conditions of some suited squares.

In a second part, we will discuss a reason behind this double-categorical presentation. It is known since \cite{paregrandismultiple} that a 2-(co)monad comes with a canonical double-category of strict (co)algebras, together with lax morphisms of (co)algebras as horizontal maps and colax morphisms as vertical maps. Here, we introduce a certain comonad sending a category to its \emph{cofree site} containing all possible filters of sieves. Then one can exhibit sites as coalgebras for the underlying copointed endofunctor, and more crucially, cover-preserving functors as lax-morphisms of coalgebras, and cover-lifting functors as colax-morphisms of coalgebras. Modulo a few adjustments to incorporate flatness, those results give a more conceptual reason for the dichotomy between morphisms and comorphisms of sites. 

A second upcoming instalment of this work will be devoted to the second question, namely the unification of morphisms and comorphisms into a single notion, namely \emph{continuous distributors} between sites. Though both those works will share some mood and rely on the formalism of extension and restriction, they go in quite orthogonal directions; moreover, though distributors and double categories often come altogether, the double categorical structure here is \emph{not} of the kind produced by mixing functors and distributors. To emphasize the independence between those two aspects, we chose to split this program accordingly.

\newpage

\section{Extensions, restrictions, morphisms and comorphisms}

This first section contains prerequisites about the main protagonists of our story; after recalling some elementary categorical technology that will ease our manipulations, we recall the definitions of sites, morphisms and comorphisms as well as the way they induce geometric morphisms between sheaf topoi.

\subsection{Extensions and restrictions}


The present work will rely most on the formalism of \emph{left and right extension} and \emph{restrictions}: those are the three adjoint operations induced by a functor at the level of presheaf categories.

\begin{division}[Nerve and Yoneda extensions]\label{Nerve and Yoneda extensions}
Recall that any functor $ f: \mathcal{C} \rightarrow \mathcal{D}$ with $\mathcal{C}$ small and $\mathcal{D}$ locally small induces a \emph{nerve} functor 
\[\begin{tikzcd}
	{\mathcal{C}} & {\mathcal{D}} \\
	{\widehat{\mathcal{C}}}
	\arrow["f", from=1-1, to=1-2]
	\arrow["{\hirayo_\mathcal{C}}"', hook, from=1-1, to=2-1]
	\arrow[""{name=0, anchor=center, inner sep=0}, "{\mathcal{D}(f,1)}", from=1-2, to=2-1]
	\arrow["{\nu_f}", shorten >=2pt, Rightarrow, from=1-1, to=0]
\end{tikzcd}\]
sending $ d$ in $\mathcal{D}$ to the presheaf $ \mathcal{D}[f,d] : \mathcal{C}^{\op} \rightarrow \Set$, which comes equiped together with a 2-cell $ \nu_f$ whose component at $c$ is the transformation with component at $c'$ given as $f_{c',c} : \mathcal{C}[c',c] \rightarrow \mathcal{D}[f(c'), f(c)] $. On the other hand, if $ \mathcal{D}$ is cocomplete, this nerve functor admits a left adjoint given by the left Kan extension along the Yoneda embedding
\[\begin{tikzcd}
	{\mathcal{C}} & {\mathcal{D}} \\
	{\widehat{\mathcal{C}}}
	\arrow["f", from=1-1, to=1-2]
	\arrow["{\hirayo_\mathcal{C}}"', hook, from=1-1, to=2-1]
	\arrow[""{name=0, anchor=center, inner sep=0}, "{\lan_{\hirayo_\mathcal{C}}f}"', from=2-1, to=1-2]
	\arrow["\simeq"{description}, draw=none, from=1-1, to=0]
\end{tikzcd}\]
Then one can show that the nerve can also be exhibited as a left Kan extension through the relation 
\[ \mathcal{D}(f,1) = \lan_{f} \hirayo_{\mathcal{C}} \]    
\end{division}

\begin{division}[Extensions and restrictions along a functor]\label{Extensions and restrictions}
First recall that any functor $ f : \mathcal{C} \rightarrow \mathcal{D}$ induces a triple of adjoints 
\[\begin{tikzcd}
	{\widehat{\mathcal{C}}} && {\widehat{\mathcal{D}}}
	\arrow[""{name=0, anchor=center, inner sep=0}, "{\lext_f}", curve={height=-18pt}, from=1-1, to=1-3]
	\arrow[""{name=1, anchor=center, inner sep=0}, "{\rext_f}"', curve={height=18pt}, from=1-1, to=1-3]
	\arrow[""{name=2, anchor=center, inner sep=0}, "{\rest_f}"{description}, from=1-3, to=1-1]
	\arrow["\dashv"{anchor=center, rotate=-90}, draw=none, from=0, to=2]
	\arrow["\dashv"{anchor=center, rotate=-90}, draw=none, from=2, to=1]
\end{tikzcd}\]
where $ \lext_f $ (resp. $\rext_f$) sends a presheaf $ X : \mathcal{C}^{\op} \rightarrow \Set$ to its left (resp. right) Kan extension along $ f^{\op}$ as depicted below
\[\begin{tikzcd}
	{\mathcal{C}^{\op}} & \Set \\
	{\mathcal{D}^{\op}}
	\arrow["X", from=1-1, to=1-2]
	\arrow["{f^{\op}}"', from=1-1, to=2-1]
	\arrow[""{name=0, anchor=center, inner sep=0}, "{\lan_{f^{\op}}X}"', from=2-1, to=1-2]
	\arrow["{\zeta_{f^{\op}}}", shorten >=2pt, Rightarrow, from=1-1, to=0]
\end{tikzcd} \hskip1cm \begin{tikzcd}
	{\mathcal{C}^{\op}} & \Set \\
	{\mathcal{D}^{\op}}
	\arrow["X", from=1-1, to=1-2]
	\arrow["{f^{\op}}"', from=1-1, to=2-1]
	\arrow[""{name=0, anchor=center, inner sep=0}, "{\ran_{f^{\op}}X}"', from=2-1, to=1-2]
	\arrow["{\xi_{f^{\op}}}", shorten >=2pt, Rightarrow, from=0, to=1-1]
\end{tikzcd}\]
while the restriction functor $ \rest_f$ sends a presheaf $Y : \mathcal{D}^{\op} \rightarrow \Set$ to the precomposite
\[\begin{tikzcd}
	{\mathcal{C}^{\op}} \\
	{\mathcal{D}^{\op}} & \Set
	\arrow["{f^{\op}}"', from=1-1, to=2-1]
	\arrow["{\rest_f Y}", from=1-1, to=2-2]
	\arrow["Y"', from=2-1, to=2-2]
\end{tikzcd}\]
so that in particular for each $c$ in $\mathcal{C}$ one has 
\[ \rest_f Y(c) = Y(f(c)) \]

In fact both the extensions and restriction functors can also be constructed formally as follows. For $ f : \mathcal{C} \rightarrow \mathcal{D}$ one can construct two possible nerve functors, the left and right nerve constructed respectively as the composite and the extension
\[\begin{tikzcd}
	{\mathcal{C}} & {\mathcal{D}} \\
	& {\widehat{\mathcal{D}}}
	\arrow["f", from=1-1, to=1-2]
	\arrow["{\mathcal{D}(1,f)}"', from=1-1, to=2-2]
	\arrow["{\hirayo_\mathcal{D}}", from=1-2, to=2-2]
\end{tikzcd} \hskip1cm
\begin{tikzcd}
	{\mathcal{C}} & {\mathcal{D}} \\
	{\widehat{\mathcal{C}}}
	\arrow["f", from=1-1, to=1-2]
	\arrow["{\hirayo_\mathcal{C}}"', hook, from=1-1, to=2-1]
	\arrow[""{name=0, anchor=center, inner sep=0}, "{\mathcal{D}(f,1)}", from=1-2, to=2-1]
	\arrow["{n_f}", shorten >=2pt, Rightarrow, from=1-1, to=0]
\end{tikzcd}\]
where $ \mathcal{D}(1,f)$ sends $c$ to the presheaf $ \mathcal{D}(1, f(c)) = \hirayo_{f(c)}$, while $\mathcal{D}(f,1)$ sends $ d$ in $\mathcal{D}$ to the presheaf $ \mathcal{D}(f, d) : \mathcal{C}^{\op} \rightarrow \Set$, which is actually the left Kan extension $ \mathcal{D}(f,1) = \lan_f \hirayo_{\mathcal{D}}$. But now one can compute either the left Kan extension $ \lan_{\hirayo_\mathcal{C}} D(1,f)$ on one side, while one can compute the left Kan extension of $\mathcal{D}(f,1)$ one the other side. 

\begin{proposition}\label{Formulas for restrictions and lextensions}
    For any functor $ f:  \mathcal{C} \rightarrow \mathcal{D}$ the nerve satisfies the following identities 
    \begin{align*}
    \rest_f &= \lan_{\hirayo_{\mathcal{D}}} \mathcal{D}(f,1) \\
     &=  \widehat{\mathcal{D}}(\hirayo_{\mathcal{D}}f, 1) \\
     &= \lan_{\mathcal{D}(1,f)} \hirayo_{\mathcal{C}}
    \end{align*}
while the left and right extensions can be computed as the extensions
    \[ \lext_f = \lan_{\hirayo_\mathcal{C}} \mathcal{D}(1,f) \hskip1cm \rext_f = \ran_{\hirayo_\mathcal{C}} \mathcal{D}(1,f) \]
 \end{proposition}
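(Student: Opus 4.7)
The plan is to deduce each of the six identities from a small toolkit: the Yoneda lemma, the coend formula for Kan extensions, and the universal property of $\hirayo_\mathcal{C}$ as the free cocompletion of $\mathcal{C}$. I would begin with the middle identity $\rest_f = \widehat{\mathcal{D}}(\hirayo_\mathcal{D} f, 1)$, which is a direct restatement of the Yoneda lemma: at any $c \in \mathcal{C}$ and any $Y \in \widehat{\mathcal{D}}$ one has $\widehat{\mathcal{D}}(\hirayo_{f(c)}, Y) \cong Y(f(c)) = \rest_f(Y)(c)$, naturally in both variables.

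The two \emph{density}-type identities $\rest_f = \lan_{\hirayo_\mathcal{D}} \mathcal{D}(f, 1)$ and $\lext_f = \lan_{\hirayo_\mathcal{C}} \mathcal{D}(1, f)$ both follow from the universal property of a Yoneda embedding as free cocompletion: any cocontinuous functor out of a presheaf category is the left Kan extension along Yoneda of its restriction to representables. Both $\rest_f$ and $\lext_f$ are left adjoints (to $\rext_f$ and $\rest_f$ respectively), hence cocontinuous, so it suffices to compute their restriction to representables: one has $\rest_f(\hirayo_d)(c) = \hirayo_d(f c) = \mathcal{D}(f c, d) = \mathcal{D}(f, 1)(d)(c)$, while the co-Yoneda lemma gives $\lext_f(\hirayo_c)(d) = \int^{c'} \mathcal{C}(c', c) \times \mathcal{D}(d, f c') = \mathcal{D}(d, f c) = \hirayo_{f c}(d)$. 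The remaining identity $\rest_f = \lan_{\mathcal{D}(1, f)} \hirayo_\mathcal{C}$ I would handle by a direct coend computation: $\lan_{\mathcal{D}(1, f)} \hirayo_\mathcal{C}(Y) = \int^c \widehat{\mathcal{D}}(\hirayo_{f c}, Y) \cdot \hirayo_c = \int^c Y(f c) \cdot \hirayo_c$, whose value at $c' \in \mathcal{C}$ is $Y(f c') = \rest_f(Y)(c')$ by a second application of co-Yoneda.

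The genuinely delicate case is $\rext_f = \ran_{\hirayo_\mathcal{C}} \mathcal{D}(1, f)$, which I expect to be the main obstacle: the free-cocompletion argument does not apply to right Kan extensions, as $\rext_f$ is continuous rather than cocontinuous, and one cannot simply invoke agreement on representables. My plan is to bootstrap this from the already-established identity for $\lext_f$ by exploiting uniqueness within the chain $\lext_f \dashv \rest_f \dashv \rext_f$: once $\lext_f$ is realized as $\lan_{\hirayo_\mathcal{C}} \mathcal{D}(1, f)$, the middle term $\rest_f$ is fixed as its right adjoint, and a further adjoint transpose should characterize $\rext_f$ as the right Kan extension of the same nerve $\mathcal{D}(1, f)$ along Yoneda. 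Concretely, I would construct the canonical comparison between $\rext_f$ and $\ran_{\hirayo_\mathcal{C}} \mathcal{D}(1, f)$ using the universal property of the right Kan extension, then transpose the invertibility claim across both adjunctions in the chain, reducing it to an identity that is verified pointwise on representables using fullness and faithfulness of Yoneda.
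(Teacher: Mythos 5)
Your handling of the five identities other than the last one is correct and essentially equivalent to the paper's: the paper also treats $\rest_f=\lan_{\hirayo_{\mathcal{D}}}\mathcal{D}(f,1)$ and $\rest_f=\widehat{\mathcal{D}}(\hirayo_{\mathcal{D}}f,1)$ as standard, obtains $\rest_f=\lan_{\mathcal{D}(1,f)}\hirayo_{\mathcal{C}}$ by applying its nerve-as-left-Kan-extension identity to the functor $\mathcal{D}(1,f):\mathcal{C}\to\widehat{\mathcal{D}}$ (you compute the same coend directly), and gets $\lext_f=\lan_{\hirayo_{\mathcal{C}}}\mathcal{D}(1,f)$ by uniqueness of left adjoints from the adjunction $\lan_{\hirayo_{\mathcal{C}}}\mathcal{D}(1,f)\dashv\lan_{\mathcal{D}(1,f)}\hirayo_{\mathcal{C}}=\rest_f$, where you instead use cocontinuity plus agreement on representables; either route is fine.

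The genuine gap is exactly where you predicted trouble, but it is not just a missing argument: the transposition you hope for does not exist. The adjunction ``Yoneda extension $\dashv$ nerve'' ($\lan_{\hirayo_{\mathcal{C}}}G\dashv\lan_{G}\hirayo_{\mathcal{C}}$) has no mirror of the form ``nerve $\dashv\ran_{\hirayo_{\mathcal{C}}}G$'', so knowing $\lext_f=\lan_{\hirayo_{\mathcal{C}}}\mathcal{D}(1,f)$ together with $\lext_f\dashv\rest_f\dashv\rext_f$ does not transpose into $\rext_f\cong\ran_{\hirayo_{\mathcal{C}}}\mathcal{D}(1,f)$. Worse, the comparison map you propose to prove invertible is not invertible in general: since $\widehat{\mathcal{D}}$ is complete and each comma $X\downarrow\hirayo_{\mathcal{C}}$ is small, $\ran_{\hirayo_{\mathcal{C}}}\mathcal{D}(1,f)$ is a pointwise right Kan extension along the fully faithful $\hirayo_{\mathcal{C}}$ and hence sends $\hirayo_c$ to $\hirayo_{f(c)}$, whereas $\rext_f$ does not preserve representables. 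Already for $\mathcal{C}=\mathcal{D}=\mathbf{1}$ and $f$ the identity, $\rext_f$ is the identity of $\Set$ while $\ran_{\hirayo_{\mathcal{C}}}\mathcal{D}(1,f)$ is constant at the singleton: representables are dense ($\lan_{\hirayo}\hirayo\cong 1$) but not codense ($\ran_{\hirayo}\hirayo\not\cong 1$), and for $f=\mathrm{id}$ the claimed identity is precisely codensity. So your ``check pointwise on representables'' step would in fact exhibit the failure rather than close the argument. Note that the paper's own proof is silent here too: it only derives the left-extension formula. The identity that the nerve calculus actually yields is $\rext_f\cong\widehat{\mathcal{C}}(\mathcal{D}(f,1),1)\cong\lan_{\mathcal{D}(f,1)}\hirayo_{\mathcal{D}}$, which follows from $\rext_fX(d)\cong\widehat{\mathcal{C}}(\rest_f\hirayo_d,X)$; if you want a right-extension statement in your write-up, that is the one to prove.
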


\begin{proof}
The first two expressions of the restriction functor are standards; for the third expression, observe that $ \mathcal{C}$ is cocomplete, so using that $ \hirayo_D f = \mathcal{D}(1,f)$ and applying the nerve expression of \cref{Nerve and Yoneda extensions} as an extension gives:
\[ \widehat{\mathcal{D}}(\mathcal{D}(1,f), 1) = \lan_{\mathcal{D}(1,f)} \hirayo_{\mathcal{C}} \]

For the extensions, observe that $\lan_{\mathcal{D}(1,f)} \hirayo_\mathcal{C}$ is right adjoint to the left extension $ \lan_{\hirayo_\mathcal{C}} \mathcal{D}(1,f)$; hence by uniqueness of the left adjoints we have the expression of the left extension. 
\end{proof}

    
\end{division}

\begin{division}[Functoriality]

Extensions and restrictions are functorial relative to composition: 
\[ \lext_g\lext_f = \lext_{gf} \hskip1cm \rest_{gf} = \rest_f \rest_g \hskip1cm \rext_g\rext_f = \rext_{gf}\]

Suppose now one has a globular 2-cell
\[\begin{tikzcd}
	{\mathcal{C}} && {\mathcal{D}}
	\arrow[""{name=0, anchor=center, inner sep=0}, "f", curve={height=-12pt}, from=1-1, to=1-3]
	\arrow[""{name=1, anchor=center, inner sep=0}, "g"', curve={height=12pt}, from=1-1, to=1-3]
	\arrow["\phi", shorten <=3pt, shorten >=3pt, Rightarrow, from=0, to=1]
\end{tikzcd}\]
Then the corresponding restriction functors are related as follows: $ \phi$ induces in a contravariant way a 2-cell between the nerves $ \mathcal{D}(\phi,1) : \mathcal{D}(g,1) \Rightarrow \mathcal{D}(f,1)$ which in turn induces a 2-cell $ \rest_g \Rightarrow \rest_f$ whose component at a presheaf $ Y : \mathcal{D}^{\op} \rightarrow \Set$ is the whiskering $ X* \phi^{\op} : Xg^{\op} \Rightarrow Xf^{\op}$. 

This 2-cell has a mate $ \lext_\phi : \lext_f \Rightarrow \lext_g$ (resp. $ \rext_\phi : \rext_f \Rightarrow \rext_g$), whose component at a presheaf $X$ is the universal 2-cell $ \lan_{f^{\op}} X \Rightarrow  \lan_{g^{\op}} X$ (resp. $ \ran_{f^{\op}} X \Rightarrow  \ran_{g^{\op}} X$) obtained from the universal property of the Kan extension from the composite 2-cell
\[\begin{tikzcd}[row sep=large]
	{\mathcal{C}^{\op}} && \Set \\
	{\mathcal{D}^{\op}}
	\arrow[""{name=0, anchor=center, inner sep=0}, "X", from=1-1, to=1-3]
	\arrow[""{name=1, anchor=center, inner sep=0}, "{f^{\op}}"', curve={height=12pt}, from=1-1, to=2-1]
	\arrow[""{name=2, anchor=center, inner sep=0}, "{g^{\op}}"{description}, curve={height=-12pt}, from=1-1, to=2-1]
	\arrow[""{name=3, anchor=center, inner sep=0}, "{\lan_{g^{\op}}}"', from=2-1, to=1-3]
	\arrow["{\zeta_g}", shorten <=2pt, shorten >=2pt, Rightarrow, from=0, to=3]
	\arrow["{\phi^{\op}}"', shorten <=5pt, shorten >=5pt, Rightarrow, from=2, to=1]
\end{tikzcd} \hskip1cm 
\begin{tikzcd}[row sep=large]
	{\mathcal{C}^{\op}} && \Set \\
	{\mathcal{D}^{\op}}
	\arrow[""{name=0, anchor=center, inner sep=0}, "X", from=1-1, to=1-3]
	\arrow[""{name=1, anchor=center, inner sep=0}, "{g^{\op}}"', curve={height=12pt}, from=1-1, to=2-1]
	\arrow[""{name=2, anchor=center, inner sep=0}, "{f^{\op}}"{description}, curve={height=-12pt}, from=1-1, to=2-1]
	\arrow[""{name=3, anchor=center, inner sep=0}, "{\ran_{f^{\op}}}"', from=2-1, to=1-3]
	\arrow["{\phi^{\op}}", shorten <=5pt, shorten >=5pt, Rightarrow, from=1, to=2]
	\arrow["{\xi_f}"', shorten <=2pt, shorten >=2pt, Rightarrow, from=3, to=0]
\end{tikzcd}\]
  
\end{division}

\begin{division}[The twofold treatment of squares]\label{Adjoint squares}

Now suppose one has a 2-cell of the following form: 
\[\begin{tikzcd}
	{\mathcal{A}} & {\mathcal{B}} \\
	{\mathcal{C}} & {\mathcal{D}}
	\arrow["f", from=1-1, to=1-2]
	\arrow["g"', from=1-1, to=2-1]
	\arrow["\phi"', shorten <=6pt, shorten >=6pt, Rightarrow, from=1-2, to=2-1]
	\arrow["k", from=1-2, to=2-2]
	\arrow["h"', from=2-1, to=2-2]
\end{tikzcd}\]
This 2-cell induces the following squares at the level of the extensions and at the level of the restrictions respectively:
\[\begin{tikzcd}
	{\widehat{\mathcal{A}}} & {\widehat{\mathcal{B}}} \\
	{\widehat{\mathcal{C}}} & {\widehat{\mathcal{D}}}
	\arrow["{\lext_{f}}", from=1-1, to=1-2]
	\arrow["{\lext_{g}}"', from=1-1, to=2-1]
	\arrow["{\lext_{\phi}}"', shorten <=6pt, shorten >=6pt, Rightarrow, from=1-2, to=2-1]
	\arrow["{\lext_{k}}", from=1-2, to=2-2]
	\arrow["{\lext_{h}}"', from=2-1, to=2-2]
\end{tikzcd} \hskip1cm 
\begin{tikzcd}
	{\widehat{\mathcal{A}}} & {\widehat{\mathcal{B}}} \\
	{\widehat{\mathcal{C}}} & {\widehat{\mathcal{D}}}
	\arrow["{\rest_f}"', from=1-2, to=1-1]
	\arrow["{\rest_g}", from=2-1, to=1-1]
	\arrow["{\rest_{\phi}}", shorten <=6pt, shorten >=6pt, Rightarrow, from=2-1, to=1-2]
	\arrow["{\rest_k}"', from=2-2, to=1-2]
	\arrow["{\rest_h}", from=2-2, to=2-1]
\end{tikzcd} \hskip1cm \begin{tikzcd}
	{\widehat{\mathcal{A}}} & {\widehat{\mathcal{B}}} \\
	{\widehat{\mathcal{C}}} & {\widehat{\mathcal{D}}}
	\arrow["{\rext_{f}}", from=1-1, to=1-2]
	\arrow["{\rext_{g}}"', from=1-1, to=2-1]
	\arrow["{\rext_{\phi}}"', shorten <=6pt, shorten >=6pt, Rightarrow, from=1-2, to=2-1]
	\arrow["{\rext_{k}}", from=1-2, to=2-2]
	\arrow["{\rext_{h}}"', from=2-1, to=2-2]
\end{tikzcd}\]

In the following, we are going to consider canonical cross-adjoint squares, as morphisms and comorphisms induce geometric morphisms through extension and restriction functor respectively. There are two possible such cross squares, the one where one takes restriction along $f,h$ and extension along $g,k$, and the one where one takes extension along $f,h$ and restriction along $g,k$ for a same orientation of the 2-cell.\\

For the first choice, one can construct a canonical 2-cell as the composite on the right
\[\begin{tikzcd}
	{\widehat{\mathcal{A}}} & {\widehat{\mathcal{B}}} \\
	{\widehat{\mathcal{C}}} & {\widehat{\mathcal{D}}}
	\arrow["{\lext_f}", from=1-1, to=1-2]
	\arrow["{\overline{\phi}^\flat}", shorten <=6pt, shorten >=6pt, Rightarrow, from=1-1, to=2-2]
	\arrow["{\rest_g}", from=2-1, to=1-1]
	\arrow["{\lext_h}"', from=2-1, to=2-2]
	\arrow["{\rest_k}"', from=2-2, to=1-2]
\end{tikzcd}\hskip1cm
\begin{tikzcd}
	{\lext_f\rest_g} & {\lext_f\rest_g\rest_h \lext_h} \\
	{\rest_k \lext_h} & {\lext_f\rest_f\rest_k \lext_h}
	\arrow["{\eta_h}", Rightarrow, from=1-1, to=1-2]
	\arrow["{\overline{\phi}^\flat}"', Rightarrow, from=1-1, to=2-1]
	\arrow["{\rest_{\phi}}", Rightarrow, from=1-2, to=2-2]
	\arrow["{\epsilon_f}", Rightarrow, from=2-2, to=2-1]
\end{tikzcd}\]

For the other choice, we can construct a canonical 2-cell as the composite on the right:
\[\begin{tikzcd}
	{\widehat{\mathcal{A}}} & {\widehat{\mathcal{B}}} \\
	{\widehat{\mathcal{C}}} & {\widehat{\mathcal{D}}}
	\arrow["{\rext_g}"', from=1-1, to=2-1]
	\arrow["{\rest_f}"', from=1-2, to=1-1]
	\arrow["{\rext_k}", from=1-2, to=2-2]
	\arrow["{\overline{\phi}^\sharp}"', shorten <=6pt, shorten >=6pt, Rightarrow, from=2-2, to=1-1]
	\arrow["{\rest_h}", from=2-2, to=2-1]
\end{tikzcd} \hskip1cm
\begin{tikzcd}
	{\rext_g\rest_f} & {\rext_g\rest_f\rest_k\rext_k} \\
	{\rest_h\rext_k} & {\rext_g\rest_g\rest_h\rext_k}
	\arrow["\epsilon_k"', Rightarrow, from=1-2, to=1-1]
	\arrow["{\overline{\phi}^\sharp}", Rightarrow, from=2-1, to=1-1]
	\arrow["{\eta_g}"', Rightarrow, from=2-1, to=2-2]
	\arrow["{\rest_\phi}"', Rightarrow, from=2-2, to=1-2]
\end{tikzcd}\]   
\end{division}

\begin{remark}\label{exact squares explicitly}
    Concretely, the induced 2-cell $ \overline{\phi^\flat} : \lext_f\rest_g \Rightarrow \rest_k\lext_h $ has as component at $ X : \mathcal{C}^{\op} \rightarrow \Set$ the 2-cell induced by the universal property of the left Kan extension at the pasting
\[\begin{tikzcd}
	{\mathcal{A}^\op} && {\mathcal{B}^\op} \\
	{\mathcal{C}^\op} && {\mathcal{D}^\op} \\
	& \Set
	\arrow["{f^\op}", from=1-1, to=1-3]
	\arrow[""{name=0, anchor=center, inner sep=0}, "{g^\op}"', from=1-1, to=2-1]
	\arrow[""{name=1, anchor=center, inner sep=0}, "{k^\op}", from=1-3, to=2-3]
	\arrow["{h^\op}"{description}, from=2-1, to=2-3]
	\arrow[""{name=2, anchor=center, inner sep=0}, "X"', from=2-1, to=3-2]
	\arrow[""{name=3, anchor=center, inner sep=0}, "{\lan_{h^\op}X}", from=2-3, to=3-2]
	\arrow["{\phi^\op}", shorten <=13pt, shorten >=13pt, Rightarrow, from=0, to=1]
	\arrow["{\zeta_X}", shorten <=6pt, shorten >=6pt, Rightarrow, from=2, to=3]
\end{tikzcd} = 
\begin{tikzcd}
	{\mathcal{A}^\op} && {\mathcal{B}^\op} \\
	{\mathcal{C}^\op} && {\mathcal{D}^{\op}} \\
	& \Set
	\arrow["{f^\op}", from=1-1, to=1-3]
	\arrow["{g^\op}"', from=1-1, to=2-1]
	\arrow["{k^{\op}}", from=1-3, to=2-3]
	\arrow[""{name=0, anchor=center, inner sep=0}, "{ \lan_{f^\op} (X\circ g^{\op})}"{description}, curve={height=18pt}, from=1-3, to=3-2]
	\arrow["X"', from=2-1, to=3-2]
	\arrow[""{name=1, anchor=center, inner sep=0}, "{\lan_{h^{\op}}X}", shift left, from=2-3, to=3-2]
	\arrow["{\zeta_{X\circ g^{\op}}}", shorten <=5pt, shorten >=5pt, Rightarrow, from=1-1, to=0]
	\arrow["{\overline{\phi}^\flat_X}"', shorten <=5pt, shorten >=5pt, Rightarrow, from=0, to=1]
\end{tikzcd}\]
while the cell $\overline{\phi^\sharp} $ is dually obtained from the property of the right Kan extension. 
\end{remark}

\subsection{Sieves and sites}

The cross-adjoint cells we saw at the end of the previous section will play a crucial role when establishing the double functoriality of the sheaf construction in the next section: we will see that, if one consider \emph{coverages} on the categories and require a correct assortement of cover preservations and reflection for the functors constituting our squares, then the cross-adjoint cells will restrict between the associated sheaf topoi and produce geometric transformations between geometric morphisms.

We first recall generalities about coverages and sites. Those notions can be expressed in several ways, either through \emph{covering families}, or also through \emph{sieves}. In this work we will mostly use the sieve formulation, for its affinity with the extension-restriction formalism.

\begin{definition}
    A \emph{sieve} on an object $c$ in a category $ \mathcal{C}$ is a subobject $ S \rightarrowtail \hirayo_c$n or equivalently, a family $S$ of arrows with codomain $c$ absorbing under precomposition: if $ u : c' \rightarrow c$ is in $S$, then for any $ u': c'' \rightarrow c'$ the composite $ uu': c'' \rightarrow c$ is in $S$. 
\end{definition}

For a family of arrows $ S$ with common codomain $c$, the sieve \emph{generated} by $ S$ is the set of maps that factorizes through a map in $S$
\[ \overline{S} = \Bigg{\{} v : d \rightarrow c \mid \exists u : c' \rightarrow c \in S \textup{ such that } 
\begin{tikzcd}
	d & c \\
	{c'}
	\arrow["v", from=1-1, to=1-2]
	\arrow["\exists"', dashed, from=1-1, to=2-1]
	\arrow["u"', from=2-1, to=1-2]
\end{tikzcd}\Bigg{\}} \] 

For two sieves $S$, $R$ on $c$, $ R \leq S$ if any arrow in $R$ factorizes through an arrow in $S$.

\begin{definition}
  A \emph{site} is a category $\mathcal{C}$ together with, for each $c$, a set $J(c)$ of sieves on $c$ such that for each $u : d \rightarrow c$ and any $S \in J(c)$, there is $R \in J(d)$ such that for all $v \in R$, one has $ uv \in S$. The data of all the sets $J(c)$ will be called a \emph{coverage}.   
\end{definition}

\begin{division}
This is the most general definition possible; however one often consider the following additional axioms, which facilitate inferences about coverages:\begin{itemize}
        \item \textit{maximality}: for each $c$ the maximal sieve $ \max(c) = \hirayo_c$ generated from $1_c$ is in $J(c)$
        \item \textit{up-closure}: for a sieve $S \in J(c)$, if $ S \leq T$, then $T \in J(c)$
        \item \textit{stability}: for $S \in J(c)$ and $u:d \rightarrow c$, the pullback sieve below is in $J(d)$: \[ u^*S = \Bigg{\{} v : d' \rightarrow d  \mid \exists w : c' \rightarrow c \in S \textup{ and a factorization } 
\begin{tikzcd}[sep=small]
	{d'} & {c'} \\
	d & c
	\arrow["\exists", dashed, from=1-1, to=1-2]
	\arrow["v"', from=1-1, to=2-1]
	\arrow["{w \in S}", from=1-2, to=2-2]
	\arrow["u"', from=2-1, to=2-2]
\end{tikzcd} \Bigg{\}} \]
        \item \textit{filteredness}: for $ S,R \in J(c)$, then the intersection sieve $S\wedge R$ is in $ J(c)$
        \item \textit{weak locality}: if $ S \in J(c)$ and $T \leq S$ satisfies that for each $u : b \rightarrow c \in S$, $u^*T \in J(b)$, then $T $ is in $J(c)$
        \item \textit{locality}: for any sieve $ S$ over $ c$, if the following \emph{$J$-localizing sieve} of $S$ 
        \[ J/S = \{ v: d\rightarrow c \mid v^*S \in J(d) \} \]
        is in $J(c)$, then $S$ itself has to be in $J(c)$.
        \item \textit{transitivity}: if $S \in J(c)$ and for each $u :b \rightarrow c $ in $S$ one takes $T_u \in J(b)$, then the multicomposite $ \bigvee_{u \in S} u \circ T_u $ is in $J(c)$.
    \end{itemize}

\end{division}

\begin{remark}
      Upclosure and filteredness follow from the conjunction of maximality, stability and locality; from \cite{maclane&moerdijk} it is known that reciprocally, maximality, stability and weak locality together with either upclosure or filteredness ensures locality; finally it is well known that in the presence of maximality and stability, locality and transitivity are equivalent. 
\end{remark}

\begin{definition}
    A \emph{Grothendieck coverage} on $\mathcal{C}$ is a coverage $J$ that satisfies the maximality axioms and the stability. A \emph{Grothendieck topology} is a Grothendieck coverage that satisfies moreover the locality axioms, hence all the possible axioms of the list above.
\end{definition}

   \begin{remark}
       In practice, even if a coverage does not satisfies one of those axioms, it is possible to close it into a Grothendieck coverage or a Grothendieck topology by closing it under pullback, maximality and transitivity: this process does not change the notion of sheaves, not the notion of continuity we are going to consider below.
   \end{remark}

\begin{remark}
There is an alternative presentation of topologies in terms of cover: a Grothendieck coverage is the data for each $c$ of a set $J(c)$ of families of arrows $(v_i : d_i \rightarrow c)_{i \in I}$ such that:\begin{itemize}
    \item for any isomorphism $ u : c' \simeq c$, the singleton $\{ u \}$ is in $J(c)$
    \item for any $v : d \rightarrow c$, the family of pullbacks $ (v^*v_i : v^*d_i \rightarrow d)_{i \in I}$ exists and is in $J(d)$;
    \item for any $(v_i : d_i \rightarrow c)_{i \in I}$ in $J(c)$, if one has for each $i\in I$ a sieve $ (w_{ij} : d_{ij} \rightarrow d_i)_{j \in J_i}$ in $J(d_i)$, then the family of the composites $ (v_iw_{ij} : d_{ij} \rightarrow c)_{(i,j) \in \coprod_{i \in I} J_i}$ is in $J(c)$.
\end{itemize}
\end{remark}

\begin{remark}
    Sites in this work will be considered as \emph{small generated}, that is, generated from a small dense subsite, even when they are large. 

\end{remark}

Extension and restrictions have a concrete signification for sieves: they correspond to the operations taking images and preimages of covering families. The following computations will play a crucial role in the remaining of this work as well as its sequel.

\begin{division}[Extensions and restriction of sieves]
For a functor $ f : \mathcal{C} \rightarrow \mathcal{D}$ and a sieve $ S \rightarrowtail \hirayo_c $ on an object $c$ of $\mathcal{C}$, the left extension $ \lext_f S$ is a sieve on $f(c)$ and returns at any object $d$ the coend 
\[ 
    \lext_f(S)(d) = \int^{c' \in \mathcal{C}} D(d,f(c')) \times S(c')
 \]
which is exactly the set of arrows factorizing through the image of $S$
\[ \{ v : d \rightarrow f(c) \mid  
\begin{tikzcd}
	d & {f(c)} \\
	{f(c')}
	\arrow["v", from=1-1, to=1-2]
	\arrow["\exists"', dashed, from=1-1, to=2-1]
	\arrow["{f(u)}"', from=2-1, to=1-2]
\end{tikzcd} \textrm{ for some } u : c' \rightarrow c \in S(c')  \}
\]

Dually, for a functor $ f: \mathcal{C} \rightarrow \mathcal{D}$ and a sieve $ R \rightarrowtail \hirayo_d$, the restriction functor, which is by \cref{Formulas for restrictions and lextensions} the left extension $ \rest_f = \lan_{\hirayo_\mathcal{D}} \mathcal{D}(f,1)$, returns at $R$ a presheaf $ \rest_f R$ computed through the coend formula
\[ \rest_f R = \int^{d' \in \mathcal{D}} R(d') \times \mathcal{D}(f,d') \]
which, at $c'$ in $\mathcal{C}$, returns the set of maps 
\[ \rest_f R(c') = \{ v: f(c') \rightarrow d \mid 
\begin{tikzcd}
	{f(c')} & d \\
	{d'}
	\arrow["v", from=1-1, to=1-2]
	\arrow["\exists"', dashed, from=1-1, to=2-1]
	\arrow["v'"', from=2-1, to=1-2]
\end{tikzcd} \textrm{ for some } v' : d' \rightarrow d \in R(d) \}  \]
In particular, in the case where $ d$ is of the form $f(c)$, $ \rest_f R$ is a subobject of $ \rest_f \hirayo_{f(c)}$ for $ \rest_f$, as a right adjoint, preserves monomorphisms. However, beware that $ \rest_f \hirayo_{f(c)} \simeq \mathcal{D}(f,f(c)) $, which is not a representable on $\mathcal{C}$: hence $ \rest_fR$ is not itself yet a sieve on $\mathcal{C}$. However, it locally is. In particular, there is a canonical element $ \nu_c : \hirayo_c \rightarrow \rest_f \hirayo{f(c)}$ given by the canonical 2-cell $\nu : \hirayo_\mathcal{C} \Rightarrow \mathcal{D}(f,1) f$ associated with the nerve of $f$, which is also the name of the identity $ 1_{f(c)}$ through the Yoneda isomorphism $ \widehat{\mathcal{C}}(\hirayo_c, \rest_f \hirayo_{f(c)}) \simeq \mathcal{D}(f(c), f(c))$: then one can consider the pullback presheaf 
\[\begin{tikzcd}
	{f^{-1}(R)} & {\rest_f R} \\
	{\hirayo_c} & {\mathcal{D}(f,f(c))}
	\arrow[from=1-1, to=1-2]
	\arrow[tail, from=1-1, to=2-1]
	\arrow["\lrcorner"{anchor=center, pos=0.125}, draw=none, from=1-1, to=2-2]
	\arrow[tail, from=1-2, to=2-2]
	\arrow["{\nu_c}"', from=2-1, to=2-2]
\end{tikzcd}\]
which is now a subobject of $ \hirayo_c$ and corresponds to the sieve on $c$ given by the set
\[ \{ u: c' \rightarrow c \mid 
\begin{tikzcd}
	{f(c')} & f(c) \\
	{d'}
	\arrow["f(u)", from=1-1, to=1-2]
	\arrow["\exists"', dashed, from=1-1, to=2-1]
	\arrow["v'"', from=2-1, to=1-2]
\end{tikzcd} \textrm{ for some } v' : d' \rightarrow f(c) \in R(d) \}\]

\end{division}

Those operations on sieves will provide a synthetic way to work with the two main protagonists of our story, the dual classes of functors one can consider between sites, namely morphisms and comorphisms of sites.

\subsection{Morphisms of sites}
    
Recall that a functor between sites $ f : (\mathcal{C}, J) \rightarrow (\mathcal{D},K)$ is usually said to be {cover-preserving} if for any covering sieve $ S \rightarrowtail \hirayo_c$ in $J(c)$, the sieve generated in $ \mathcal{D}$ from arrows of the forms $ f(u) : f(c') \rightarrow f(c)$ is $K$-covering. Using the extension functors, this rephrases as the following definition:

\begin{definition}
A functor between sites $ f : (\mathcal{C}, J) \rightarrow (\mathcal{D},K)$ is \emph{cover-preserving} if for any $J$-covering sieve $ S \rightarrowtail \hirayo_c$ in $J(c)$, the sieve $ \lext_f S$ is in $K(f(c))$. 
\end{definition}

In general cover-preservation is combined with flatness to produce a convenient notion of functor between sites, as sole cover preservation does not ensure convenient restriction to categories of sheaves. In the context where one consider sites both as domain and codomain, one must use the most general notion of flatness, as described in \cite{shulman2012exact}[lemma 4.2]:

\begin{definition}
    A functor between sites $ f : (\mathcal{C},J) \rightarrow (\mathcal{D},K)$ is said to be \emph{covering-flat} if for any finite diagram $(c_i)_{i \in I}$ in $\mathcal{C}$ and any cone $ (v_i : d \rightarrow f(c_i))_{i \in I}$ in $\mathcal{D}$, the set of arrows $ w: d' \rightarrow d$ such that there is a cone $ (a_i :c \rightarrow c_i)_{i \in I}$ in $\mathcal{C}$ together with a factorization as below is $K$-covering.
\[\begin{tikzcd}
	{d'} & d \\
	{f(c)} & {f(c_i)}
	\arrow["w", from=1-1, to=1-2]
	\arrow["{\exists u}"', from=1-1, to=2-1]
	\arrow["{v_i}", from=1-2, to=2-2]
	\arrow["{f(a_i)}"', from=2-1, to=2-2]
\end{tikzcd}\]

\end{definition}

\begin{remark}
    Observe in fact this definition only involve the topology on the codomain category.
\end{remark}

\begin{definition}
A \emph{morphism of sites} $ f : (\mathcal{C},J) \rightarrow (\mathcal{D},K)$ is a functor that is both covering-flat and cover-preserving.

\end{definition}

We will denote as $ \Site^{\flat}$ the 2-category of (small generated) sites, morphisms of sites and transformations between them. Observe that there is an inclusion $ \Top^{\op} \hookrightarrow \Site^\flat$ sending any topos \(\mathcal{E} \) on the canonical site $ (\mathcal{E}, J_{can})$, any geometric morphism $f$ to its inverse image $ f^*$, and any geometric $ \phi$ transformation to its inverse image part $ \phi^*$.

\begin{definition}
    If $ \mathcal{E}$ is a topos, a functor $(\mathcal{C}, J) \rightarrow \mathcal{E}$ is said to be \emph{$J$-continuous} if it is flat and defines a morphism of sites into $ (\mathcal{E},J_{can})$ where $J_{can}$ is the canonical topology on $\mathcal{E}$.

\end{definition}

\begin{division}[Geometric morphism from morphism of sites]

A morphism of sites $f : (\mathcal{C},J) \rightarrow (\mathcal{D},K) $ induces a geometric morphism $ \Sh(f)$ whose inverse image is constructed as the left Kan extension $ \Sh(f)^* = \lan_{\mathfrak{y}_{(\mathcal{C},J)}} \mathfrak{y}_{(\mathcal{D},K)} f$ and comes equiped with a universal invertible 2-cell
\[\begin{tikzcd}
	{\mathcal{C}} & {\mathcal{D}} \\
	{\widehat{\mathcal{C}}_J} & {\widehat{\mathcal{D}}_K}
	\arrow[""{name=0, anchor=center, inner sep=0}, "f", from=1-1, to=1-2]
	\arrow["{\mathfrak{y}_{(\mathcal{C},J)}}"', from=1-1, to=2-1]
	\arrow["{\mathfrak{y}_{(\mathcal{D},k)}}", from=1-2, to=2-2]
	\arrow[""{name=1, anchor=center, inner sep=0}, "{\Sh(f)^*}"', from=2-1, to=2-2]
	\arrow["{\zeta_f \atop\simeq }"{description}, draw=none, from=0, to=1]
\end{tikzcd}\]

Moreover the inverse image part is also related to the left adjoint $ \lext_f$ as follows
\[\begin{tikzcd}
	{\widehat{C}} & {\widehat{D}} \\
	{\widehat{\mathcal{C}}_J} & {\widehat{\mathcal{D}}_K}
	\arrow["{\lext_f}", from=1-1, to=1-2]
	\arrow["{\mathfrak{a}_K}", from=1-2, to=2-2]
	\arrow["{\mathfrak{i}_J}", hook, from=2-1, to=1-1]
	\arrow["{\Sh(f)^*}"', from=2-1, to=2-2]
\end{tikzcd}\]

Moreover, the restriction functors $\rest_f$ restrict to sheaves as the direct image part
\[\begin{tikzcd}
	{\widehat{C}} & {\widehat{D}} \\
	{\widehat{\mathcal{C}}_J} & {\widehat{\mathcal{D}}_K}
	\arrow["{\rest_f}"', from=1-2, to=1-1]
	\arrow["{\mathfrak{i}_J}", hook, from=2-1, to=1-1]
	\arrow["{\mathfrak{i}_K}"', hook, from=2-2, to=1-2]
	\arrow["{\Sh(f)_*}", dashed, from=2-2, to=2-1]
\end{tikzcd}\]

Finally by adjunction this produces also an invertible 2-cell relating the inverse image parts
\[\begin{tikzcd}
	{\widehat{C}} & {\widehat{D}} \\
	{\widehat{\mathcal{C}}_J} & {\widehat{\mathcal{D}}_K}
	\arrow[""{name=0, anchor=center, inner sep=0}, "{\lext_f}", from=1-1, to=1-2]
	\arrow["{\mathfrak{a}_J}"', from=1-1, to=2-1]
	\arrow["{\mathfrak{a}_K}", from=1-2, to=2-2]
	\arrow[""{name=1, anchor=center, inner sep=0}, "{\Sh(f)^*}"', from=2-1, to=2-2]
	\arrow["\simeq"{description}, draw=none, from=0, to=1]
\end{tikzcd}\]
and at the level of the 2-category of Grothendieck topoi a canonical 2-cell
\[\begin{tikzcd}
	{\widehat{\mathcal{C}}} & {\widehat{\mathcal{D}}} \\
	{\widehat{\mathcal{C}}_J} & {\widehat{\mathcal{D}}_K}
	\arrow["{\widehat{f}}"', from=1-2, to=1-1]
	\arrow["{\iota_J}", hook', from=2-1, to=1-1]
	\arrow["{\iota_{K}}"', hook', from=2-2, to=1-2]
	\arrow["{\Sh(f)}", from=2-2, to=2-1]
\end{tikzcd}\]

      This defines a pseudofunctor, contravariant in 1-cells
\[\begin{tikzcd}
	{(\Site^{\flat})^{\op}} & \Top
	\arrow["\Sh", from=1-1, to=1-2]
\end{tikzcd}\]
\end{division}

\subsection{Comorphisms of sites}

Recall that a functor between sites $f: (\mathcal{D}, K) \rightarrow (\mathcal{C},J)$ is usually said to be {cover-lifting}, or also to be a {comorphism of sites}, if for any $d$ in $\mathcal{D}$ and any $J$-covering sieve $S$ on $f(d)$, there is a $K$-covering sieve $R$ such that $ f(v)$ is in $S$ for all $u \in R$. Again, this can be rephrased, this time with the restriction functor:

\begin{definition}
    A functor between sites $f: (\mathcal{D}, K) \rightarrow (\mathcal{C},J)$ is \emph{cover-lifting}
if for any $d$ in $\mathcal{D}$ and any $J$-covering sieve $S$ on $f(d)$, the restricted sieve $ f^{-1}(S)$ on $d$ is $K$-covering. 
\end{definition}

\begin{remark}
  Equivalently, this conditions amounts to $S$ containing a sieve $ \lext_fR$ for any $K$-covering sieve $ R$ on $d$.  
\end{remark}

We will denote as $ \Site^{\sharp}$ the 2-category of sites, comorphisms of sites and transformations between them.

\begin{division}[Geometric morphisms from comorphisms of sites]

Here is the process from \cite{caramello2020denseness}[Section 3.3] from which a comorphism of site induces a geometric morphism. For a comorphism of site $ F : (\mathcal{D},K) \rightarrow (\mathcal{C},J)$, denote as $ A_F : (\mathcal{C},J) \rightarrow \widehat{\mathcal{D}}_K$ the composite $ \mathfrak{a}_K \mathcal{C}(F,1)$. Then $A_F$ is a $J$-continuous flat functor, and it induces a geometric morphism $ \Sh(A_F) = C_F : \widehat{\mathcal{D}}_K \rightarrow \widehat{\mathcal{C}}_J$ with $ C_F^* = Sh(A_F)^* = \lan_{\mathfrak{y}_{(\mathcal{C},J)}}A_F$. In fact, following \cite{elephant}[C2.3.18], the inverse image part is also obtained as the composite
\[\begin{tikzcd}
	{\widehat{\mathcal{C}}} & {\widehat{\mathcal{D}}} \\
	{\widehat{\mathcal{C}}_J} & {\widehat{\mathcal{D}}_K}
	\arrow["{\rest_F}", from=1-1, to=1-2]
	\arrow["{\mathfrak{a}_K}", from=1-2, to=2-2]
	\arrow["{\mathfrak{i}_J}", hook, from=2-1, to=1-1]
	\arrow["{C_F^*}"', dashed, from=2-1, to=2-2]
\end{tikzcd}\]

More generally a functor $F: \mathcal{D} \rightarrow \mathcal{C}$ defines a comorphism of sites $ (\mathcal{D},K) \rightarrow (\mathcal{C},J) $ if and only if the restriction functor fits into a diagram
\[\begin{tikzcd}
	{\widehat{C}} & {\widehat{D}} \\
	{\widehat{\mathcal{C}}_J} & {\widehat{\mathcal{D}}_K}
	\arrow["{\rest_F}", from=1-1, to=1-2]
	\arrow["{\mathfrak{a}_J}"', from=1-1, to=2-1]
	\arrow["{\mathfrak{a}_K}", from=1-2, to=2-2]
	\arrow["{C_F^*}"', dashed, from=2-1, to=2-2]
\end{tikzcd}\]

Such a 2-cell has a mate relating the direct image part with the right extension functor:
\[\begin{tikzcd}
	{\widehat{C}} & {\widehat{D}} \\
	{\widehat{\mathcal{C}}_J} & {\widehat{\mathcal{D}}_K}
	\arrow["{\rext_F}"', from=1-2, to=1-1]
	\arrow["{\mathfrak{i}_J}", hook, from=2-1, to=1-1]
	\arrow["{\mathfrak{i}_K}"', hook, from=2-2, to=1-2]
	\arrow["{C_{F*}}", from=2-2, to=2-1]
\end{tikzcd}\]
  
   This defines a pseudofunctor, contravariant in 2-cells
\[\begin{tikzcd}
	{(\Site^{\sharp})^{\co}} & \Top
	\arrow["C", from=1-1, to=1-2]
\end{tikzcd}\]
\end{division}

\begin{remark}
    Beware that this construction is not fully faithful relative to transformations of comorphisms, as the sheafification functor $ \mathfrak{a}_K$ is not fully faithful. 
\end{remark}

\section{Double category of sites}

As announced above, our main goal in this work will be to reconciliate morphisms and comorphisms and make them live in a same two dimensional categorical structure on sites. The problem to arrange them into a single 2-category is that morphisms and comorphisms do not compose with each other: of course, one could compose the underlying functors, but then cover-preservation and cover-lifting may be lost. While in the sequel of this paper we will see that a common refinement of those two properties will provide a suitable, composition-stable notion of 1-cell for a 2-category, we here rather \emph{avoid} the question of composing morphisms and comorphisms altogether: this is the purpose of arranging them as the vertical and horizontal cells of a \emph{double category} of sites.

\subsection{Prerequisites on double categories}

Double categories were initially introduced for a quite different kind of situations where one would consider a notion of maps-like 1-cells together with a more general notion of relation-like 1-cells whose composition might be less strict that composition of maps. Hence this purposeful dichotomy between two class of 1-cells that will not be required to compose with each other, and will be rather related to square-like cells intertwining two possible assortments of such cells.

\begin{definition}
    A \emph{double category} $\mathbb{D}$ is the data of \begin{itemize}
        \item a class of \emph{objects} $ C,D...$
        \item a class of \emph{horizontal 1-cells} $ f: C \rightarrow D$
        \item a class of \emph{vertical 1-cells} $ F : C \distrightarrow D$
        \item a class of \emph{double cells} of the form
\[\begin{tikzcd}
	A & B \\
	C & D
	\arrow["f", from=1-1, to=1-2]
	\arrow["F"', "\shortmid"{marking}, from=1-1, to=2-1]
	\arrow["\phi"{description}, draw=none, from=1-1, to=2-2]
	\arrow["G", "\shortmid"{marking}, from=1-2, to=2-2]
	\arrow["g"', from=2-1, to=2-2]
\end{tikzcd}\]
    \end{itemize}

subject to the following axioms:\begin{itemize}
    \item each object $C$ admits both a horizontal and vertical identity $ 1_C$ and $ \id_C$
    \item horizontal arrows from a category with identities given by horizontal identities
    \item vertical arrows from a category with identities given by vertical identities
    \item double cells paste vertically and horizontally
\[\begin{tikzcd}
	A & B \\
	{A'} & {B'} \\
	{A''} & {B''}
	\arrow["f", from=1-1, to=1-2]
	\arrow[""{name=0, anchor=center, inner sep=0}, "F"', "\shortmid"{marking}, from=1-1, to=2-1]
	\arrow[""{name=1, anchor=center, inner sep=0}, "G", "\shortmid"{marking}, from=1-2, to=2-2]
	\arrow["{f'}"{description}, from=2-1, to=2-2]
	\arrow[""{name=2, anchor=center, inner sep=0}, "{F'}"', "\shortmid"{marking}, from=2-1, to=3-1]
	\arrow[""{name=3, anchor=center, inner sep=0}, "{G'}", "\shortmid"{marking}, from=2-2, to=3-2]
	\arrow["{f''}"', from=3-1, to=3-2]
	\arrow["\phi"{description}, draw=none, from=1, to=0]
	\arrow["{\phi'}"{description}, draw=none, from=3, to=2]
\end{tikzcd}= \begin{tikzcd}
	A & B \\
	\\
	{A''} & {B''}
	\arrow["f", from=1-1, to=1-2]
	\arrow[""{name=0, anchor=center, inner sep=0}, "{F'F}"', "\shortmid"{marking}, from=1-1, to=3-1]
	\arrow[""{name=1, anchor=center, inner sep=0}, "{G'G}", "\shortmid"{marking}, from=1-2, to=3-2]
	\arrow["{f''}"', from=3-1, to=3-2]
	\arrow["{\phi'\bullet\phi}"{description}, shorten <=6pt, shorten >=6pt, Rightarrow, draw=none, from=0, to=1]
\end{tikzcd}  \] \[ 
\begin{tikzcd}
	A & {A'} & {A''} \\
	B & {B'} & {B''}
	\arrow["f", from=1-1, to=1-2]
	\arrow[""{name=0, anchor=center, inner sep=0}, "F"', "\shortmid"{marking}, from=1-1, to=2-1]
	\arrow["{f'}", from=1-2, to=1-3]
	\arrow[""{name=1, anchor=center, inner sep=0}, "{F'}", from=1-2, to=2-2]
	\arrow[""{name=2, anchor=center, inner sep=0}, "{F''}", "\shortmid"{marking},  from=1-3, to=2-3]
	\arrow["g"', from=2-1, to=2-2]
	\arrow["{g'}"', from=2-2, to=2-3]
	\arrow["\phi"{description}, draw=none, from=1, to=0]
	\arrow["{\phi'}"{description}, draw=none, from=2, to=1]
\end{tikzcd}  = \begin{tikzcd}[column sep=small]
	A && A'' \\
	{B} && {B''}
	\arrow["{f'f}", from=1-1, to=1-3]
	\arrow[""{name=0, anchor=center, inner sep=0}, "F"', "\shortmid"{marking}, from=1-1, to=2-1]
	\arrow[""{name=1, anchor=center, inner sep=0}, "G", "\shortmid"{marking}, from=1-3, to=2-3]
	\arrow["{g'g}"', from=2-1, to=2-3]
	\arrow["{\phi'\circ\phi}"{description}, draw=none, from=0, to=1]
\end{tikzcd} \]
\end{itemize}
\end{definition}

\begin{definition}
    A \emph{double functor} between double categories $ \mathbb{F} : \mathbb{C} \rightarrow \mathbb{D}$ is the data of 
    \begin{itemize}
        \item a function $ F : \textup{Ob}_\mathbb{C} \rightarrow \textup{Ob}_\mathbb{D} $ at the level of objects 
        \item a horizontal component $ h\mathbb{F} : h\mathbb{C} \rightarrow h\mathbb{D}$
        \item a vertical component $ v\mathbb{F} : v\mathbb{C} \rightarrow v\mathbb{D}$
        \item and for every double cell in $ \mathbb{C}$ a double cell in $\mathbb{D}$
        \[\begin{tikzcd}
	A & B \\
	C & D
	\arrow["f", from=1-1, to=1-2]
	\arrow["H"', from=1-1, "\shortmid"{marking}, to=2-1]
	\arrow["\phi"{description}, draw=none, from=1-1, to=2-2]
	\arrow["K", "\shortmid"{marking}, from=1-2, to=2-2]
	\arrow["g"', from=2-1, to=2-2]
\end{tikzcd} 
\hskip1cm \begin{tikzcd}
	FA & FB \\
	FC & FD
	\arrow["{h\mathbb{F}f}", from=1-1, to=1-2]
	\arrow["{v\mathbb{F}H}"', "\shortmid"{marking}, from=1-1, to=2-1]
	\arrow["{\mathbb{F}\phi}"{description}, draw=none, from=1-1, to=2-2]
	\arrow["{v\mathbb{F}K}", "\shortmid"{marking}, from=1-2, to=2-2]
	\arrow["{h\mathbb{F}g}"', from=2-1, to=2-2]
\end{tikzcd}\]
    \end{itemize}
which are moreover to the suited coherences relative to pasting and identities. 
\end{definition}

In those axioms, we see that composites of vertical and horizontal map, in any of the two possible order, exist only formally without any operation returning a composite 1-cell: only squares intertwining such formal composites are considered, as well as their different pasting.

\begin{example}
    The ur-example is the double category  $ \Dist$ whose
\begin{itemize}
    \item objects are categories 
    \item horizontal maps are functors
    \item vertical maps are \emph{distributors} (a.k.a. \emph{profunctors})
    \item double cells are natural transformations
\end{itemize}

However the example we are going to consider are quite different and do not share this relational flavour; in particular they wont be instance of \emph{equipments}, an umbrella term describing the kind of relational double categories involving distributors-like notions as horizontal cells. In such double categories, there is a certain dissymmetry between the two classes of 1-cells, one being more general than the other, the other embedding in the second one in the same way as functors canonically define adjoint pairs of distributors. A contrario, in our case, we will consider a more symmetric kind of double category where the horizontal and vertical cells are more like two dual classes of functors, without one being more "relational" than the other. In the last section, we will describe a peculiar flavour of double categories our example of interest will fit in. 
\end{example}

But there is a simpler example:

\begin{example}
    For any 2-category $ \mathcal{K}$, there is a \emph{lax quintet} (resp. oplax quintet) double category $ \mathcal{K}^{\Box}_\lax$ (resp. $ \mathcal{K}^{\Box}_\oplax$)  whose objects are those of $\mathcal{K}$, vertical and horizontal morphisms are any morphisms in $\mathcal{K}$, and double cells are lax (resp. oplax) squares. In fact $ \mathcal{K}^{\Box}_\lax$ and $ \mathcal{K}^{\Box}_\oplax$ are the same thing up to a transposition duality.
\end{example}

\subsection{Double categories of sites, sheafification as a double functor}

We saw there was two possible 2-categories of sites $ \Site^\flat$ and $ \Site^\sharp$ depending whether we considered morphisms or comorphisms as 1-cells. Here we exhibit those two 2-categories respectively as the horizontal and vertical components of a double category, on which sheafification will behave as a double functor to the quintet double category of topoi.

\begin{definition}
    We define the double category $ \Site_\lax^\natural$ as having as objects (small generated) sites, as horizontal arrows morphisms of sites, as vertical arrows comorphisms of sites, and as a double cell
\[\begin{tikzcd}
	{(\mathcal{A},M)} & {(\mathcal{B},L)} \\
	{(\mathcal{C},J)} & {(\mathcal{D},K)}
	\arrow["f", from=1-1, to=1-2]
	\arrow["G"', "\shortmid"{marking}, from=1-1, to=2-1]
	\arrow["\phi"{description}, draw=none, from=1-2, to=2-1]
	\arrow["K", "\shortmid"{marking}, from=1-2, to=2-2]
	\arrow["h"', from=2-1, to=2-2]
\end{tikzcd}\]
a 2-cell of the admissible form
    \[\begin{tikzcd}
	{(\mathcal{A},M)} & {(\mathcal{B},L)} \\
	{(\mathcal{C},J)} & {(\mathcal{D},K)}
	\arrow["f", from=1-1, to=1-2]
	\arrow["G"', from=1-1, to=2-1]
	\arrow["\phi"', shorten <=7pt, shorten >=7pt, Rightarrow, from=1-2, to=2-1]
	\arrow["K", from=1-2, to=2-2]
	\arrow["h"', from=2-1, to=2-2]
\end{tikzcd}\]

Dually we can define the double category $\Site_\oplax^\natural$ as having the same data except that a double cell as above corresponds to a 2-cell with the converse orientation
   \[\begin{tikzcd}
	{(\mathcal{A},M)} & {(\mathcal{B},L)} \\
	{(\mathcal{C},J)} & {(\mathcal{D},K)}
	\arrow["f", from=1-1, to=1-2]
	\arrow["G"', from=1-1, to=2-1]
	\arrow["\phi"', shorten <=7pt, shorten >=7pt, Rightarrow, from=2-1, to=1-2]
	\arrow["K", from=1-2, to=2-2]
	\arrow["h"', from=2-1, to=2-2]
\end{tikzcd}\]
\end{definition}

\begin{remark}
    Beware that those two double categories are not equivalent nor related in a simple way: one has as double cells transformation going from a composite of a morphisms followed by a morphism to the composite of a comorphism followed by a morphism, while in the other double category this is the other way around. 
\end{remark}

Now recall that Grothendieck topoi form a bicategory, so we can consider the quintet double category:

\begin{definition}
    In the following we will denote as $\GTop^\square_\lax$ the quintet double category of Grothendieck topoi, where both horizontal and vertical cells are geometric morphisms, and whose double cells are any lax square filled by a geometric transformation.
\end{definition}

\begin{remark}[{\cite{niefield2011glueing}[Example 3.4]}]
   There is another double category of Grothendieck topoi $ \Top^\Lex$ where the horizontal cells are geometric morphisms, vertical cells are lex functors, and a 2-cell 
\[\begin{tikzcd}
	{\mathcal{E}} & {\mathcal{F}} \\
	{\mathcal{G}} & {\mathcal{H}}
	\arrow[""{name=0, anchor=center, inner sep=0}, "f", from=1-1, to=1-2]
	\arrow["G"', "\shortmid"{marking}, from=1-1, to=2-1]
	\arrow["K", "\shortmid"{marking}, from=1-2, to=2-2]
	\arrow[""{name=1, anchor=center, inner sep=0}, "h"', from=2-1, to=2-2]
	\arrow["\phi"{description}, draw=none, from=0, to=1]
\end{tikzcd}\]
is the same as a natural transformation $\phi^\flat: h^*K \Rightarrow Gf^* $ as visualized below in the 2-category of lex functors
\[\begin{tikzcd}
	{\mathcal{E}} & {\mathcal{F}} \\
	{\mathcal{G}} & {\mathcal{H}}
	\arrow["G"', from=1-1, to=2-1]
	\arrow["{f^*}"', from=1-2, to=1-1]
	\arrow["K", from=1-2, to=2-2]
	\arrow["\phi^\flat"', shorten <=4pt, shorten >=4pt, Rightarrow, from=2-2, to=1-1]
	\arrow["{h^*}", from=2-2, to=2-1]
\end{tikzcd}\]
However this is not the same double categorical structure, though the double category of quintets embedds in a horizontally contravariant way in this one. We will only focus on the quintet double category in this part. 
\end{remark}

We want to construct a sheafification double functor whose horizontal component is the pseudofunctor $ \Sh$ form morphisms and whose vertical component is the pseudofunctor $ C$ for comorphisms. This requires to understand how to manage double cells. Suppose now one has four sites $(\mathcal{A}, M)$, $(\mathcal{B}, L)$, $(\mathcal{C}, J)$ and $(\mathcal{D}, K)$, related through a 2-cell 
\[\begin{tikzcd}
	{(\mathcal{A},M)} & {(\mathcal{B},L)} \\
	{(\mathcal{C},J)} & {(\mathcal{D},K)}
	\arrow["f", from=1-1, to=1-2]
	\arrow["G"', from=1-1, to=2-1]
	\arrow["\phi"', shorten <=7pt, shorten >=7pt, Rightarrow, from=1-2, to=2-1]
	\arrow["K", from=1-2, to=2-2]
	\arrow["h"', from=2-1, to=2-2]
\end{tikzcd}\]
where $ f,h$ are morphisms of sites and $ G,K$ comorphisms of sites respectively. Then from \cref{Adjoint squares} we have a canonical cross 2-cell $ \overline{\phi} : \lext_f \rest_g \Rightarrow \rest_k \lext_h $ between the presheaf categories. By commutation of the inverse image, both $ \lext_f$ and $ \lext_h$ lift through the sheafifications functors to $\Sh(f)^*$ and $ \Sh(h)^*$ respectively, and the same is true for the associated $ \rest_G$ and $\rest_K$ which lift to the inverse image parts $ C_G^*$ and $C_H^*$ respectively. Hence the sheafification along $ \mathfrak{a}_L : \widehat{\mathcal{B}} \rightarrow \widehat{\mathcal{B}}_L$ produces a 2-cell 
\[\begin{tikzcd}
	{\widehat{\mathcal{A}}_M} & {\widehat{\mathcal{B}}_L} \\
	{\widehat{\mathcal{C}}_J} & {\widehat{\mathcal{D}}_K}
	\arrow["{\Sh(f)^*}", from=1-1, to=1-2]
	\arrow["{\widetilde{\phi}^\flat}"', shorten <=8pt, shorten >=8pt, Rightarrow, from=1-1, to=2-2]
	\arrow["{C_G^*}", from=2-1, to=1-1]
	\arrow["{\Sh(h)^*}"', from=2-1, to=2-2]
	\arrow["{C_K^*}"', from=2-2, to=1-2]
\end{tikzcd}\]

This is a 2-cell between geometric morphisms
\[\begin{tikzcd}
	{\widehat{\mathcal{A}}_M} & {\widehat{\mathcal{B}}_L} \\
	{\widehat{\mathcal{C}}_J} & {\widehat{\mathcal{D}}_K}
	\arrow["{C_G}"', from=1-1, to=2-1]
	\arrow["{\widetilde{\phi}}"', shorten <=8pt, shorten >=8pt, Rightarrow, from=1-1, to=2-2]
	\arrow["{\Sh(f)}"', from=1-2, to=1-1]
	\arrow["{C_K}", from=1-2, to=2-2]
	\arrow["{\Sh(h)}", from=2-2, to=2-1]
\end{tikzcd}\]

\begin{theorem}
One has a horizontal contravariant, vertically covariant double functor and join full-on-objects-embeddings of the categories of sites with morphisms and comorphisms as the horizontal and vertical categories respectively:
\[\begin{tikzcd}
	{(\Site^{\flat})^{\op}} & {\Site_\lax^{\natural}} & {(\Site^{\sharp})^{\co}} \\
	& {\Top^{\square}_\lax}
	\arrow["h", from=1-1, to=1-2]
	\arrow["\Sh"', from=1-1, to=2-2]
	\arrow[from=1-2, to=2-2]
	\arrow["v"', from=1-3, to=1-2]
	\arrow["C", from=1-3, to=2-2]
\end{tikzcd}\]   
 \end{theorem}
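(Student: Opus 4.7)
The plan is to decompose the statement into three stages: establishing the double-categorical structure of $\Site_\lax^\natural$, constructing the sheafification double functor, and recovering the pseudofunctors $\Sh$ and $C$ by restriction along the two embeddings.

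First I would check that $\Site_\lax^\natural$ is a well-defined double category. Its horizontal and vertical $1$-categories are those of $\Site^\flat$ and $\Site^\sharp$, whose closure under composition rests on the fact that cover-preservation, covering-flatness and cover-lifting are each composition-stable. Admissibility of a double cell only constrains its $1$-dimensional boundary, not the $2$-cell itself, so horizontal and vertical pasting of double cells are inherited from pasting of $2$-cells in $\Cat$ with no further condition to check. The embeddings $h$ and $v$ are then identity-on-objects and send a $2$-cell of (co)morphisms of sites to a globular double cell with identity vertical (resp.\ horizontal) boundary; the $\op$ decoration on $\Site^\flat$ reflects the horizontal contravariance of the double functor being built, while the $\co$ decoration on $\Site^\sharp$ matches the orientation convention for admissible $2$-cells in $\Site_\lax^\natural$ when the horizontal boundary is trivial.

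Second, I would construct the sheafification double functor. On objects and $1$-cells it is tautologically defined by $(\mathcal{C},J) \mapsto \widehat{\mathcal{C}}_J$, $f \mapsto \Sh(f)$ and $G \mapsto C_G$, so only the assignment on double cells requires work. Given an admissible double cell $\phi$ with $1$-boundary $(f, G, h, K)$, I would first form the presheaf-level cross-adjoint $2$-cell $\overline{\phi}^\flat : \lext_f \rest_G \Rightarrow \rest_K \lext_h$ of \cref{Adjoint squares}. The commutation squares recalled after the definitions of $\Sh(f)$ and $C_G$ then show that $\lext_f, \lext_h$ descend through sheafification to the inverse images $\Sh(f)^*, \Sh(h)^*$, while $\rest_G, \rest_K$ descend to $C_G^*, C_K^*$; whiskering $\overline{\phi}^\flat$ with the sheafification unit thus yields a $2$-cell $\widetilde{\phi}^\flat : \Sh(f)^* C_G^* \Rightarrow C_K^* \Sh(h)^*$ between inverse image parts, which by the mate correspondence for the adjunctions $\Sh(f)^* \dashv \Sh(f)_*$ and $\Sh(h)^* \dashv \Sh(h)_*$ corresponds to the sought lax square $2$-cell $\widetilde{\phi}$ between the associated geometric morphisms in $\Top^\square_\lax$.

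The main obstacle is the (pseudo) double-functoriality of the cell assignment, i.e.\ its compatibility with horizontal and vertical pasting and with horizontal and vertical identities. This decomposes into three standard $2$-categorical coherences: (i) the cross-adjoint construction $\phi \mapsto \overline{\phi}^\flat$ is itself functorial with respect to pasting, which is classical mate calculus for adjoint squares and ultimately rests on the triangle identities for the adjunctions $\lext_f \dashv \rest_f$ and $\rext_f \dashv \rest_f$; (ii) the passage to sheaves respects pasting, via the pseudonaturality of the commutation $2$-cells between $\lext_f, \rest_G$ and $\Sh(f)^*, C_G^*$; and (iii) the mate correspondence turning $\widetilde{\phi}^\flat$ into $\widetilde{\phi}$ respects pasting. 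Once these are in place, the commutativity of the triangle in the statement is automatic: restricting the double functor to cells with identity vertical (resp.\ horizontal) boundary recovers $\Sh$ (resp.\ $C$) together with their pseudofunctorial structures, which simultaneously identifies $h$ and $v$ as identity-on-objects embeddings of the respective $2$-categories as the horizontal and vertical $2$-categories of $\Site_\lax^\natural$.
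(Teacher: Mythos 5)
Your proposal follows essentially the same route as the paper: the double cell assignment is obtained from the cross-adjoint square $\overline{\phi}^\flat$ of \cref{Adjoint squares}, descended to sheaf topoi via the commutation squares $\mathfrak{a}_K\lext_f \simeq \Sh(f)^*\mathfrak{a}_J$ and $\mathfrak{a}_K\rest_G \simeq C_G^*\mathfrak{a}_J$, giving $\widetilde{\phi}^\flat$ and hence the geometric transformation $\widetilde{\phi}$ in $\Top^\square_\lax$, with $h$ and $v$ the evident identity-on-objects embeddings. Your extra attention to composition-stability of the 1-cell classes and to the pasting coherences (mate calculus, pseudonaturality of the comparison cells) only makes explicit what the paper leaves implicit in its pre-theorem construction.
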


\begin{division}[The problem with oplax cells]
    We were able to manage lax squares as double cells; but there is an assymetry in topos theory, which is that for a morphism of site, only $\lext_f$ and $\rest_f$ are visible in the induced geometric morphism, while for a comorphism, we retain only $\rest_G$ and $\rext_G$: but in the first case, $\rext_f$ is not part of the induced geometric morphism $\Sh(f)$ (that is, does not restrict between the categories of sheaves, as in general $ \Sh(f)_*$ will not have a further right adjoint) while in the second case, $ \lext_G$ is not part of $C_G$, as the inverse image part $ C_G^*$ may not have a further left adjoint. \\

    As a consequence, we cannot expect to construct in the same way a sheafification double functor from $ \Site^\natural_\oplax$, with oplax squares rather than lax squares as double cells: indeed, an oplax square as below
\[\begin{tikzcd}
	{(\mathcal{A},M)} & {(\mathcal{B},L)} \\
	{(\mathcal{C},J)} & {(\mathcal{D},K)}
	\arrow["f", from=1-1, to=1-2]
	\arrow[""{name=0, anchor=center, inner sep=0}, "G"', from=1-1, to=2-1]
	\arrow[""{name=1, anchor=center, inner sep=0}, "K", from=1-2, to=2-2]
	\arrow["h"', from=2-1, to=2-2]
	\arrow["\phi", shorten <=8pt, shorten >=8pt, Rightarrow, from=0, to=1]
\end{tikzcd}\]
induces the following Beck-Chevalley 2-cells
\[\begin{tikzcd}
	{\widehat{\mathcal{A}}} & {\widehat{\mathcal{B}}} \\
	{\widehat{\mathcal{C}}} & {\widehat{\mathcal{D}}}
	\arrow[""{name=0, anchor=center, inner sep=0}, "{\lext_G}"', from=1-1, to=2-1]
	\arrow["{\rest_f}"', from=1-2, to=1-1]
	\arrow[""{name=1, anchor=center, inner sep=0}, "{\lext_K}", from=1-2, to=2-2]
	\arrow["{\rest_h}", from=2-2, to=2-1]
	\arrow["{\overline{\phi}^\flat}", shorten <=6pt, shorten >=6pt, Rightarrow, from=0, to=1]
\end{tikzcd} \hskip1cm 
\begin{tikzcd}
	{\widehat{\mathcal{A}}} & {\widehat{\mathcal{B}}} \\
	{\widehat{\mathcal{C}}} & {\widehat{\mathcal{D}}}
	\arrow["{\rext_f}", from=1-1, to=1-2]
	\arrow[""{name=0, anchor=center, inner sep=0}, "{\rest_G}", from=2-1, to=1-1]
	\arrow["{\rext_h}"', from=2-1, to=2-2]
	\arrow[""{name=1, anchor=center, inner sep=0}, "{\rest_K}"', from=2-2, to=1-2]
	\arrow["{\overline{\phi}^\sharp}"', shorten <=6pt, shorten >=6pt, Rightarrow, from=1, to=0]
\end{tikzcd}\]
and in both case either the vertical or horizontal components are not part of an induced geometric morphism in general. 
\end{division}

\subsection{Locally exact squares}

One could ask what are the lax (resp. oplax) squares that get inverted by the sheafification double functors: this would be a generalization of the notion of exact squares, relative to a choice of topology. This question relates to the characterization of \emph{exact squares}, which we will recall below.

\begin{definition}
    Recall that a lax square
\[\begin{tikzcd}
	{\mathcal{A}} & {\mathcal{B}} \\
	{\mathcal{C}} & {\mathcal{D}}
	\arrow["f", from=1-1, to=1-2]
	\arrow["g"', from=1-1, to=2-1]
	\arrow["\phi"', shorten <=6pt, shorten >=6pt, Rightarrow, from=1-2, to=2-1]
	\arrow["k", from=1-2, to=2-2]
	\arrow["h"', from=2-1, to=2-2]
\end{tikzcd}\]
is said to be \emph{exact} (or satisfies \emph{Beck-Chevalley condition}) if the associated 2-cell below is invertible
\[\begin{tikzcd}
	{\widehat{\mathcal{A}}} & {\widehat{\mathcal{B}}} \\
	{\widehat{\mathcal{C}}} & {\widehat{\mathcal{D}}}
	\arrow["{\lext_f}", from=1-1, to=1-2]
	\arrow["{\overline{\phi} \atop \simeq}"{description}, shorten <=6pt, shorten >=6pt, draw =none, from=1-1, to=2-2]
	\arrow["{\rest_g}", from=2-1, to=1-1]
	\arrow["{\lext_h}"', from=2-1, to=2-2]
	\arrow["{\rest_k}"', from=2-2, to=1-2]
\end{tikzcd}
\]
\end{definition}

\begin{division}
    There are several equivalent formulations of this condition. First, using the description of $ \overline{\phi} $ given at \cref{exact squares explicitly}, this means that for each presheaf $ X : \mathcal{C}^{\op} \rightarrow \Set$ the precomposite along $k^{\op}$ of the left Kan extension of $X$ along $g^{\op}$ is exhibited as the left Kan extension of $X \circ g^{\op}$ along $f^{\op}$:
    \[ (\lan_{h^{\op}} X ) \circ g^{\op} \simeq \lan_{f^{\op}} X \circ g^{\op} \]
    In other words, that pasting the opposite square of $\phi$ above the extension of a presheaf returns the extension of the restricted presheaf. 

    From \cite{guitart1980carrésexact}, we know that the condition of being exact also amounts to the condition that the following induced relation between the representable distributors
    \[ \mathcal{B}(1,f) \otimes \mathcal{C}(g,1) \rightarrow \mathcal{D}(k,h)  \]
    is invertible; more explicitly the component of this transformation at $ (b,c)$ and $a$ is given by
    \[  \mathcal{B}(b,f(a)) \otimes \mathcal{C}(g(a),c) \rightarrow \mathcal{D}(k(b),h(c)) \]
    sending a pair $ (u : b \rightarrow f(a), v : g(a) \rightarrow c)$ to the composite 
\[\begin{tikzcd}
	{k(b)} & {kf(a)} & {hg(a)} & {h(c)}
	\arrow["{k(u)}", from=1-1, to=1-2]
	\arrow["{\phi_a}", from=1-2, to=1-3]
	\arrow["{h(v)}", from=1-3, to=1-4]
\end{tikzcd}\]
and an arrow $ ((s,t): u \rightarrow u', (z,w) : v \rightarrow v')$ to the composite square
\[\begin{tikzcd}
	{k(b)} & {kf(a)} & {hg(a)} & {h(c)} \\
	{k(b')} & {kf(a')} & {hg(a')} & {h(c')}
	\arrow["{k(u)}", from=1-1, to=1-2]
	\arrow["{k(s)}"', from=1-1, to=2-1]
	\arrow["{\phi_a}", from=1-2, to=1-3]
	\arrow["{k(t)}"{description}, from=1-2, to=2-2]
	\arrow["{h(v)}", from=1-3, to=1-4]
	\arrow["{h(z)}"{description}, from=1-3, to=2-3]
	\arrow["{h(w)}", from=1-4, to=2-4]
	\arrow["{k(u')}"', from=2-1, to=2-2]
	\arrow["{\phi_{a'}}"', from=2-2, to=2-3]
	\arrow["{h(v')}"', from=2-3, to=2-4]
\end{tikzcd}\]
Indeed, by the coend formula of composite of distributors, this means that at any $(b,c)$ we have an isomorphism 
\[ \int^{a \in \mathcal{A}} \mathcal{B}[b, f(a)] \times \mathcal{C}[g(a),c] \simeq \mathcal{B}[k(b), h(c)] \]
but we recognize in this coend formula the expression of the composite $ \lext_f \rest_g $
while on the other hand, the distributor $ \mathcal{D}[k,h]$ decomposes as a composites
\[ \mathcal{D}[k,h] \simeq \mathcal{D}[k,1] \otimes \mathcal{D}[1,h] \]
in which again we recognize through its coend formulation the composite $\rest_k \lext_h $.

From this observation, as explained in \cite{guitart1980carrésexact}, exactness of $\phi$ amounts to asking that: \begin{itemize}
    \item for any $u : k(b) \rightarrow h(c)$ there is a pair of arrows $s : k(b) \rightarrow kf(a)$, $ t : hg(a) \rightarrow h(c)$ such that one has a twisted decomposition
\[\begin{tikzcd}
	{k(b)} & {h(c)} \\
	{kf(a)} & {hg(a)}
	\arrow["u", from=1-1, to=1-2]
	\arrow["{k(s)}"', from=1-1, to=2-1]
	\arrow["{\phi_a}"', from=2-1, to=2-2]
	\arrow["{h(t)}"', from=2-2, to=1-2]
\end{tikzcd}\]
    \item and any such two decompositions are related by a zigzag: if $ (s,a,t)$ and $(s',a',t')$ are two twisted decompositions of a same arrow $u$, then there is a zigzag relating $ a$ and $a'$ in $\mathcal{A}$ with intermediate decomposition
\[\begin{tikzcd}
	& {kf(a')} & {hg(a')} \\
	{k(b)} & \cdots & \cdots & {h(c)} \\
	& {kf(a)} & {hg(a)}
	\arrow["{\phi_{a'}}", from=1-2, to=1-3]
	\arrow["{h(t')}", from=1-3, to=2-4]
	\arrow["{k(s')}", from=2-1, to=1-2]
	\arrow[from=2-1, to=2-2]
	\arrow["{k(s)}"', from=2-1, to=3-2]
	\arrow[from=2-2, to=1-2]
	\arrow[from=2-2, to=2-3]
	\arrow[from=2-2, to=3-2]
	\arrow[from=2-3, to=1-3]
	\arrow[from=2-3, to=2-4]
	\arrow[from=2-3, to=3-3]
	\arrow["{\phi_a}"', from=3-2, to=3-3]
	\arrow["{h(t)}"', from=3-3, to=2-4]
\end{tikzcd}\]
\end{itemize}

Equivalently, by applying the graph construction of the distributors above, one can see that $ \phi$ is exact if and only if the comparison functor below is final
\[ g \downarrow c \rightarrow k \downarrow hc  \]
\end{division}

    If we now consider a square relating morphisms and comorphisms between sites, it may happen that the natural transformation $\widetilde{\phi}$ is not yet invertible in the presheaf category, but becomes invertible after sheafification: such will be the double cells of $\Site^\natural_\lax$ that are inverted by the double functor $ \Sh$.

\begin{definition}
    A lax square as below between sites (underlying a double cell of $\Site^\natural_\lax$)
    \[\begin{tikzcd}
	{(\mathcal{A},M)} & {(\mathcal{B},L)} \\
	{(\mathcal{C},J)} & {(\mathcal{D},K)}
	\arrow["f", from=1-1, to=1-2]
	\arrow["G"', from=1-1, to=2-1]
	\arrow["\phi"', shorten <=7pt, shorten >=7pt, Rightarrow, from=1-2, to=2-1]
	\arrow["K", from=1-2, to=2-2]
	\arrow["h"', from=2-1, to=2-2]
\end{tikzcd}\]
will be said \emph{locally exact} if the corresponding transformation below is invertible
\[\begin{tikzcd}
	{\widehat{\mathcal{A}}_M} & {\widehat{\mathcal{B}}_L} \\
	{\widehat{\mathcal{C}}_J} & {\widehat{\mathcal{D}}_K}
	\arrow["{\Sh(f)^*}", from=1-1, to=1-2]
	\arrow["{\widetilde{\phi}^\flat}"', shorten <=8pt, shorten >=8pt, Rightarrow, from=1-1, to=2-2]
	\arrow["{C_G^*}", from=2-1, to=1-1]
	\arrow["{\Sh(h)^*}"', from=2-1, to=2-2]
	\arrow["{C_K^*}"', from=2-2, to=1-2]
\end{tikzcd}\]
\end{definition}

To characterize such cells, we are going to refer to the following result from \cite{caramello2020denseness} which characterizes morphisms of diagrams into a sites which induce an isomorphism between the corresponding colimits in the sheaf topos:

\begin{proposition}[{\cite{caramello2020denseness}[Lemma 2.21]}]\label{Olivia characterization of relative cofinality}
    Let $(\mathcal{B},L)$ be a site and together wiht a 2-cell as below
\[\begin{tikzcd}
	{\mathcal{X}} && {\mathcal{Y}} \\
	& {\mathcal{B}}
	\arrow["G", from=1-1, to=1-3]
	\arrow[""{name=0, anchor=center, inner sep=0}, "F"', from=1-1, to=2-2]
	\arrow[""{name=1, anchor=center, inner sep=0}, "{H}", from=1-3, to=2-2]
	\arrow["\xi", shorten <=6pt, shorten >=6pt, Rightarrow, from=0, to=1]
\end{tikzcd}\]
then the comparison functor in $\widehat{\mathcal{B}}_L$
\[\begin{tikzcd}
	{\underset{\mathcal{X}}{\colim} \; \mathfrak{a}_L\hirayo F} & {\underset{\mathcal{Y}}{\colim} \; \mathfrak{a}_L\hirayo H}
	\arrow["{\widetilde{\xi}}", from=1-1, to=1-2]
\end{tikzcd}\]
is an isomorphism if and only if we have the following two conditions:\begin{itemize}
    \item for any $ u : b \rightarrow H(y)$ there is a $L$-cover $ (s_i : b_i \rightarrow b)_{i \in I}$ together with a family of arrows $ (t_i : b_i \rightarrow F(x_i))_{i \in I}$ such that for each $i \in I$ the composites below
    \[\begin{tikzcd}
	& {b_i} \\
	{F(x_i)} && b \\
	{HG(x_i)} && {H(y)}
	\arrow["{t_i}"', from=1-2, to=2-1]
	\arrow["{s_i}", from=1-2, to=2-3]
	\arrow["{\xi_{x_i}}"', from=2-1, to=3-1]
	\arrow["u", from=2-3, to=3-3]
    \end{tikzcd}\]
    are in the same component of the comma $ b \downarrow H$
    \item for any span as below 
\[\begin{tikzcd}
	& b \\
	{F(x)} && {F(x')}
	\arrow["v"', from=1-2, to=2-1]
	\arrow["{v'}", from=1-2, to=2-3]
\end{tikzcd}\]
such that the composites $ \xi_x v$ and $ \xi_{x'}v'$ are the the same connected component of $ b \downarrow H$, there is a $L$-covering family $ (s_i : b_i \rightarrow b)_{i\in I}$ such that at any $i$ the composites $ us_i$, $u's_i$ are in the same connected component of $ b_i \downarrow F$.  
\end{itemize}
\end{proposition}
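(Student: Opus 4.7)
The plan is to reduce the statement to the standard criterion for a morphism of presheaves to become an isomorphism after sheafification, namely the $L$-bicovering condition.

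First, I compute the two colimits at the level of presheaves on $\mathcal{B}$. By density of the Yoneda embedding, for any functor $D: \mathcal{I} \to \mathcal{B}$ one has an identification $\colim_{i \in \mathcal{I}} \hirayo_{D(i)} \simeq P_D$, where $P_D : \mathcal{B}^{\op} \to \Set$ is the presheaf $b \mapsto \pi_0(b \downarrow D)$; an element over $b$ is an equivalence class $[i, u : b \to D(i)]$ for the relation generated by the arrows of $\mathcal{I}$ above. Applied to $F$ and $H$, the natural transformation $\xi$ induces a map $\overline{\xi} : P_F \to P_H$ sending the class $[x, u: b \to F(x)]$ to the class $[G(x), \xi_x \circ u: b \to HG(x)]$, well-defined on connected components by functoriality of $G$ and naturality of $\xi$.

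Since $\mathfrak{a}_L$ is a left adjoint, it commutes with colimits, so the comparison $\widetilde{\xi}$ in $\widehat{\mathcal{B}}_L$ is exactly $\mathfrak{a}_L(\overline{\xi}) : \mathfrak{a}_L P_F \to \mathfrak{a}_L P_H$. I then invoke the standard topos-theoretic criterion: a morphism of presheaves $f: P \to Q$ on $(\mathcal{B}, L)$ becomes invertible after sheafification if and only if it is $L$-bicovering, namely \textbf{(i)} for every $q \in Q(b)$ there is an $L$-cover $(s_i : b_i \to b)_{i \in I}$ together with elements $p_i \in P(b_i)$ such that $f(p_i) = q \cdot s_i$ in $Q(b_i)$, and \textbf{(ii)} for every $p, p' \in P(b)$ with $f(p) = f(p')$ there is an $L$-cover $(s_i)_{i \in I}$ with $p \cdot s_i = p' \cdot s_i$ in $P(b_i)$ for each $i$.

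The remaining work is then purely a matter of translating these two conditions through the explicit description of $\overline{\xi}$. Condition \textbf{(i)} applied to a class $[u: b \to H(y)] \in P_H(b)$ says exactly that there exists an $L$-cover $(s_i: b_i \to b)$ and elements $[t_i : b_i \to F(x_i)] \in P_F(b_i)$ with $[\xi_{x_i} \circ t_i] = [u \circ s_i]$ in $\pi_0(b_i \downarrow H)$, which is the first bullet. Condition \textbf{(ii)} applied to two classes $[v: b \to F(x)]$ and $[v': b \to F(x')]$ sharing the same image $[\xi_x v] = [\xi_{x'} v']$ in $\pi_0(b \downarrow H)$ says exactly that there is an $L$-cover $(s_i)$ such that $v \cdot s_i$ and $v' \cdot s_i$ belong to the same connected component of $b_i \downarrow F$, which is the second bullet.

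The main obstacle is essentially bookkeeping: carefully tracking the identification between elements of $P_F(b)$ and connected components of the comma $b \downarrow F$, and checking that local equality of such classes in $P_F(b_i)$ is literally equivalent to the two representatives being connected in $b_i \downarrow F$. The substantive input is the bicovering criterion, which is a well-known topos-theoretic fact; it applies here in the generality of coverages because one may freely replace $L$ by the Grothendieck topology it generates without changing either the sheaf topos $\widehat{\mathcal{B}}_L$ or the class of bicovering maps tested against $L$.
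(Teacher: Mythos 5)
Your route is the natural one, and there is nothing in the paper to compare it against: the proposition is imported verbatim from the cited reference (Lemma 2.21 of the denseness paper) and no proof is given here. Your reduction is the standard argument and it is correct: identify $\colim_{\mathcal{X}}\hirayo F$ in $\widehat{\mathcal{B}}$ with the presheaf $b\mapsto \pi_0(b\downarrow F)$, observe that $\mathfrak{a}_L$ preserves colimits so that $\widetilde{\xi}=\mathfrak{a}_L(\overline{\xi})$, and invoke the criterion that a map of presheaves is inverted by sheafification if and only if it is locally surjective and locally injective; the translation of these two conditions into the two bullets is exactly right (your reading also silently corrects two slips in the statement: in the first bullet the components should be taken in $b_i\downarrow H$, not $b\downarrow H$, and in the second bullet the composites should be $vs_i$, $v's_i$ rather than $us_i$, $u's_i$).

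The one place where you should be careful is the closing sentence. Replacing $L$ by the Grothendieck topology it generates leaves the sheaf topos unchanged, but it does not leave unchanged the class of maps satisfying the one-step conditions of the statement: if $L$ is a coverage not closed under transitivity, local surjectivity with respect to the generated topology need not be witnessed by a single $L$-cover as the first bullet demands. For instance, a sieve $S$ on $b$ obtained by composing two layers of $L$-covers has $\mathfrak{a}_L(S\hookrightarrow\hirayo_b)$ invertible without $S$ containing any $L$-cover, and taking $\mathcal{X}=\int S$, $\mathcal{Y}=1$ realizes this as an instance of the proposition whose bullets fail as literally stated. So your argument is complete when $L$ is a Grothendieck topology, which is the setting of the cited lemma; for a bare coverage the bullets have to be read with covers in the generated topology (a looseness the paper itself inherits by stating the result for arbitrary sites). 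Apart from this caveat, the proof is correct.
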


\begin{definition}
    We will say that a morphism of diagram $ (G,\xi)$ satisfying the condition of \cref{Olivia characterization of relative cofinality} is \emph{relatively $L$-cofinal}
\end{definition}

\begin{proposition}
    A lax square $ \phi$ between sites as above is locally exact if and only if, for each $ c$ in $ \mathcal{C}$, the morphism of diagrams below is relatively $L$-cofinal (where $ L$ is the topology on $\mathcal{B}$)
\[\begin{tikzcd}
	{G\downarrow c} && {K\downarrow h(c)} \\
	& {\mathcal{B}}
	\arrow["{\phi \downarrow c}", from=1-1, to=1-3]
	\arrow[""{name=0, anchor=center, inner sep=0}, "{f \pi^G_0}"', from=1-1, to=2-2]
	\arrow[""{name=1, anchor=center, inner sep=0}, "{\pi_0^K}", from=1-3, to=2-2]
	\arrow["{=}", draw=none, from=0, to=1]
\end{tikzcd}\]
\end{proposition}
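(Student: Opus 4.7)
The strategy is to test invertibility of $\widetilde{\phi}^\flat$ on a generating family of $\widehat{\mathcal{C}}_J$ and then reduce directly to \cref{Olivia characterization of relative cofinality}. Both $\Sh(f)^* C_G^*$ and $C_K^* \Sh(h)^*$ are cocontinuous as composites of inverse image parts of geometric morphisms, and the sheafified representables $\mathfrak{y}_{(\mathcal{C},J)}(c)$ generate $\widehat{\mathcal{C}}_J$ under colimits; hence $\widetilde{\phi}^\flat$ is invertible if and only if each of its components $\widetilde{\phi}^\flat_{\mathfrak{y}_{(\mathcal{C},J)}(c)}$, for $c \in \mathcal{C}$, is so.

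Next, I would compute both sides as colimits indexed by the relevant commas. Density of Yoneda gives the canonical presentations $\rest_G \hirayo_c = \mathcal{C}(G(-),c) \simeq \colim_{(a,t) \in G \downarrow c} \hirayo_a$ in $\widehat{\mathcal{A}}$ and $\rest_K \hirayo_{h(c)} = \mathcal{D}(K(-),h(c)) \simeq \colim_{(b',v) \in K \downarrow h(c)} \hirayo_{b'}$ in $\widehat{\mathcal{B}}$. Applying the cocontinuous functor $\lext_f$ followed by $\mathfrak{a}_L$, and using the squares relating $\Sh(f)^*$, $C_G^*$, $C_K^*$ to $\lext_f$, $\rest_G$, $\rest_K$ through sheafification recalled in Sections 1.3 and 1.4, one gets
\[ \Sh(f)^* C_G^* \mathfrak{y}_{(\mathcal{C},J)}(c) \simeq \colim_{(a,t) \in G \downarrow c} \mathfrak{y}_{(\mathcal{B},L)}(f(a)), \qquad C_K^* \Sh(h)^* \mathfrak{y}_{(\mathcal{C},J)}(c) \simeq \colim_{(b',v) \in K \downarrow h(c)} \mathfrak{y}_{(\mathcal{B},L)}(b'). \]

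The crucial step is then to identify the transformation $\widetilde{\phi}^\flat_{\mathfrak{y}_{(\mathcal{C},J)}(c)}$ with the colimit comparison of these two cocones. Unpacking the coend description of $\overline{\phi}^\flat$ given in \cref{exact squares explicitly}, the presheaf-level map $\overline{\phi}^\flat_{\hirayo_c}: \lext_f \rest_G \hirayo_c \to \rest_K \hirayo_{h(c)}$ sends the $(a,t)$-coprojection $\hirayo_{f(a)} \hookrightarrow \lext_f \rest_G \hirayo_c$ to the coprojection at $(f(a), h(t)\circ \phi_a)$ of the target colimit; after sheafification, $\widetilde{\phi}^\flat_{\mathfrak{y}_{(\mathcal{C},J)}(c)}$ therefore coincides with the comparison map $\widetilde{\xi}$ of \cref{Olivia characterization of relative cofinality} associated to the functor $\phi \downarrow c: G \downarrow c \to K \downarrow h(c)$ sending $(a,t)$ to $(f(a), h(t)\circ \phi_a)$, together with the \emph{identity} 2-cell, since $f \pi_0^G = \pi_0^K \circ (\phi \downarrow c)$ holds strictly.

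Invoking \cref{Olivia characterization of relative cofinality} for this data then characterizes invertibility of each $\widetilde{\phi}^\flat_{\mathfrak{y}_{(\mathcal{C},J)}(c)}$ as the relative $L$-cofinality of $(\phi \downarrow c, \mathrm{id})$, which is exactly the claimed condition. The main obstacle in this plan is the identification in the third paragraph: one must trace the mate construction of $\overline{\phi}^\flat$ from \cref{Adjoint squares} through the coend formulas recalled in Section 1.1 to confirm that it really returns the universal comparison between the two cocones described above. Everything else is either a formal density-plus-cocontinuity argument or a direct application of the existing characterization of colimit isomorphisms in sheaf topoi.
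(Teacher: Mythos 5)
Your proposal follows essentially the same route as the paper's proof: reduce to the (sheafified) representables $\hirayo_c$ by cocontinuity/denseness, present $\lext_f\rest_G(\hirayo_c)$ and $\rest_K\lext_h(\hirayo_c)$ as colimits over $G\downarrow c$ and $K\downarrow h(c)$ respectively, identify $\widetilde{\phi}^\flat_{\hirayo_c}$ with the colimit comparison induced by $\phi\downarrow c$ (with strictly commuting triangle), and conclude by \cref{Olivia characterization of relative cofinality}. The identification you flag as the main obstacle is exactly the step the paper asserts by direct inspection of the coprojections, so your sketch is correct and matches the paper's argument.
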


\begin{proof}
To show that the comparison map $ \mathfrak{a}_L \widetilde{\phi}^\flat : \mathfrak{a}_L\lext_f \rest_G \Rightarrow \mathfrak{a}_L \rest_K \lext_h $ is a natural isomorphism of sheaves, it suffices by denseness of $\mathcal{C}$ to prove that at any $c$ in $\mathcal{C}$, the comparison map in $\widehat{\mathcal{B}}$ at the representable $ \widetilde{\phi}^{\flat}_{\hirayo_c} : \lext_f \rest_G (\hirayo_c) \Rightarrow  \rest_K \lext_h(\hirayo_c) $ is sent to an isomorphism after sheafification. But by computation of left Kan extension, the presheaf $ \lext_f \rest_G(\hirayo_c)$ is exhibited as the colimit in $\widehat{\mathcal{B}}$ of the functor 
\[\begin{tikzcd}
	{G\downarrow c} & {\mathcal{A}} & {\mathcal{B}} & {\widehat{\mathcal{B}}}
	\arrow["{ \pi^G_0}", from=1-1, to=1-2]
	\arrow["f", from=1-2, to=1-3]
	\arrow[hook, from=1-3, to=1-4]
\end{tikzcd}\]
On the other hand, the Kan extension of a representable is again representable 
\[ \lext_h(\hirayo_c) \simeq \mathcal{D}[1, h(c)] \]
so we recognize $ \rest_K \lext_h(\hirayo_c) = \mathcal{D}[K,h(c)]$, which is the colimit in $ \widehat{B}$ of the diagram
\[\begin{tikzcd}
	{K \downarrow h(c)} & {\mathcal{B}} & {\widehat{\mathcal{B}}}
	\arrow["{\pi_{0}^K}", from=1-1, to=1-2]
	\arrow[hook, from=1-2, to=1-3]
\end{tikzcd}\]
Finally observe that the comparison map $ \widetilde{\phi}^{\flat}_{\hirayo_c}$ is exhibited as the comparison map between the colimits in $\widehat{\mathcal{B}}$ induced by the morphism of diagram given by $\phi\downarrow c : G\downarrow c \rightarrow K\downarrow h(c) $ sending $ u: G(a) \rightarrow c$ to the composite
\[\begin{tikzcd}
	{Kf(a)} & {hG(a)} & {h(c)}
	\arrow["{\phi_a}", from=1-1, to=1-2]
	\arrow["{h(u)}", from=1-2, to=1-3]
\end{tikzcd}\]
so that by \cref{Olivia characterization of relative cofinality}, it is sent in $\widehat{\mathcal{B}}_L$ to an isomorphism iff and only if $ \phi \downarrow c$ is relatively $L$-cofinal.
\end{proof}

\begin{corollary}
    A double cell $ \phi$ of $ \Site^\natural_\lax$ is inverted by the double functor $ \Sh$ if and only if the underlying square is locally exact. 
\end{corollary}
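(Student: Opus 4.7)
The plan is to unpack the definitions. By construction of the sheafification double functor established earlier in this section, the image of a double cell $\phi$ under $\Sh$ is the geometric transformation $\widetilde{\phi}$ between the induced geometric morphisms of the horizontal and vertical edges, and this transformation is precisely encoded by the natural transformation $\widetilde{\phi}^\flat : \Sh(f)^* C_G^* \Rightarrow C_K^* \Sh(h)^*$ obtained by restricting the presheaf-level cell $\overline{\phi}^\flat$ between the corresponding inverse image parts.

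I would then recall that a 2-cell in the quintet double category $\Top^\square_\lax$ of Grothendieck topoi is a geometric transformation, and that such a transformation is invertible as a 2-cell precisely when the underlying natural transformation between the inverse image parts is a natural isomorphism; this is because geometric transformations are presented by, and uniquely determined by, their inverse image components, and invertibility is preserved and reflected by this correspondence.

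Combining these two observations, the statement "$\phi$ is inverted by the double functor $\Sh$" unfolds to "$\widetilde{\phi}^\flat$ is a natural isomorphism of sheaves", which is literally the definition of the underlying square being locally exact. Hence there is nothing further to check: the corollary is a tautological reformulation of the definition, once the description of $\Sh$ on double cells has been made explicit. The genuine mathematical content has already been delivered by the preceding proposition, which translates this condition into the intrinsic criterion of relative $L$-cofinality of the comma comparison functors $\phi\downarrow c$; this corollary is simply the packaging of that criterion into the language of the double functor $\Sh$.
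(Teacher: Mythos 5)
Your unpacking is correct and matches the paper's (implicit) reasoning: the paper states this corollary without a separate proof precisely because, as you observe, $\Sh$ sends the double cell to the geometric transformation presented by $\widetilde{\phi}^\flat$, and invertibility of that transformation is by definition local exactness of the square, the substantive work having been done in the preceding proposition. Nothing is missing.
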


In the following we discuss a few examples of locally exact squares, recovering in some cases classical results of topos theory as such a local exactness condition for some suited double cell in $\Site^\natural$.


\subsection{Comma and cocomma squares}

An archetypical example of exact square is provided by comma squares, whose exactness encapsulates the pointwiseness of Kan extensions, hence its crucial role in formal category theory; it is dually true that cocomma squares are exact. This lead us to ask first what are the correct notion of comma and cocomma in the double category $ \Site^\natural$, which implies to equip them with convenient topologies: they will provide trivial examples of locally exact squares, which are exact even before localization. 

\begin{lemma}[{\cite{guitart1980carrésexact}[example 1.14 (2)]}]
 For any $ f, G$, the comma square below is exact 
 \[\begin{tikzcd}
	{G \downarrow f} & {\mathcal{D}} \\
	{\mathcal{C}} & {\mathcal{B}}
	\arrow["{\pi_0}", from=1-1, to=1-2]
	\arrow[""{name=0, anchor=center, inner sep=0}, "{\pi_1}"', from=1-1, to=2-1]
	\arrow[""{name=1, anchor=center, inner sep=0}, "G", from=1-2, to=2-2]
	\arrow["f"', from=2-1, to=2-2]
	\arrow["{\lambda_{G,F}}"', shorten <=7pt, shorten >=7pt, Rightarrow, from=1, to=0]
\end{tikzcd}\]
\end{lemma}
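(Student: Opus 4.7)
The plan is to apply the characterization of exact squares from Guitart recalled just above, either in the twisted-decomposition form or equivalently via the finality of the canonical comparison functor
\[ F_c : \pi_1 \downarrow c \longrightarrow G \downarrow f(c), \qquad ((d_0, c_0, \alpha_0), v: c_0 \to c) \longmapsto (d_0, f(v)\alpha_0) \]
for every $c \in \mathcal{C}$. The finality formulation is the conceptually cleanest, and I would phrase the argument that way.

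The first step is to produce the \emph{tautological} object. Given any $\beta: G(d) \to f(c)$ in $\mathcal{B}$, the triple $(d, c, \beta)$ is itself an object of $G \downarrow f$ with $\lambda_{(d,c,\beta)} = \beta$; consequently $((d, c, \beta), 1_c, 1_d)$ lies in the slice $(d, \beta) \downarrow F_c$, which establishes nonemptiness. In the twisted-decomposition language this is just the trivial decomposition $\beta = f(1_c) \cdot \lambda_{(d,c,\beta)} \cdot G(1_d)$.

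The main step is connectedness. Given another object $((d_0, c_0, \alpha_0), v, \sigma)$ of the slice — so $\beta = f(v)\alpha_0 G(\sigma)$ — I would introduce the intermediate object $z := (d, c_0, \alpha_0 G(\sigma))$ in $G \downarrow f$, paired with $v: c_0 \to c$ and $1_d: d \to d$ to form an object $Z$ of the slice. Two morphisms emanate from $Z$: the pair $(\sigma, 1_{c_0})$ defines $Z \to ((d_0, c_0, \alpha_0), v, \sigma)$, while $(1_d, v)$ defines $Z \to ((d, c, \beta), 1_c, 1_d)$. The required comma commutations $f(1_{c_0}) (\alpha_0 G(\sigma)) = \alpha_0 G(\sigma)$ and $f(v)(\alpha_0 G(\sigma)) = \beta$ hold by construction and by hypothesis respectively, so a zigzag of length two through $Z$ connects any given object of the slice to the tautological one.

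I expect no real obstacle: the technique amounts to observing that every decomposition of an arrow $G(d) \to f(c)$ factors through the universal one $(d, c, \beta)$ by absorbing the $\mathcal{D}$-coordinate $\sigma$ into the underlying comma triple via the intermediate $(d, c_0, \alpha_0 G(\sigma))$. This is essentially the universal property of the comma category reinterpreted in Guitart's twisted-decomposition language, which is the classical reason comma (and dually cocomma) squares are the archetypal exact squares.
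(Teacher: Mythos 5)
Your proof is correct. The paper does not actually prove this lemma---it is quoted directly from Guitart (example 1.14 (2))---so there is no in-text argument to compare against; your verification via the finality of the comparison functor $\pi_1 \downarrow c \to G \downarrow f(c)$ (equivalently, the twisted-decomposition criterion recalled just before the statement) is exactly the classical argument: the tautological object $(d,c,\beta)$ with $(1_c,1_d)$ gives nonemptiness of each coslice, and the intermediate object $(d, c_0, \alpha_0 G(\sigma))$ furnishes the two-step zigzag giving connectedness. All the commutation conditions you list do hold (the comma-square condition for $(\sigma,1_{c_0})$ is a tautology, the one for $(1_d,v)$ is the hypothesis $\beta = f(v)\alpha_0 G(\sigma)$, and both triangles over the structure maps to $F_c$ commute), so the argument is complete and is essentially Guitart's own.
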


This means that we have an invertible 2-cell
\[\begin{tikzcd}
	{\widehat{G \downarrow F}} & {\widehat{\mathcal{B}}} \\
	{\widehat{\mathcal{C}}} & {\widehat{\mathcal{D}}}
	\arrow[""{name=0, anchor=center, inner sep=0}, "{C_{\pi_1}}"', from=1-1, to=2-1]
	\arrow["{\Sh(\pi_0)}"', from=1-2, to=1-1]
	\arrow[""{name=1, anchor=center, inner sep=0}, "{C_G}", from=1-2, to=2-2]
	\arrow["{\Sh(f)}", from=2-2, to=2-1]
	\arrow["{\widetilde{\lambda_{G,F}} \atop \simeq}"{description}, draw=none, from=0, to=1]
\end{tikzcd}\]

Now one may ask for an improved version of this result involving topologies on the categories. We first show below that the comma category from a comorphism of sites to a morphism of sites can be itself equipped with a canonical topology such that the projections inherit the appropriate structure:

\begin{lemma}
    Let $ f : (\mathcal{C},J) \rightarrow (\mathcal{D},K)$ a morphism of sites and $ G : (\mathcal{B},L) \rightarrow (\mathcal{D},K)$ a comorphism of sites; then there is a topology $J_{G,f}$ on $ G \downarrow f$ that makes $ \pi_0 : G \downarrow f \rightarrow \mathcal{B}$ a morphism of site and $\pi_1 : G \downarrow f \rightarrow \mathcal{C}$ a comorphism of site so we have a double cell of $\Site^\natural_\lax$ as below:
\[\begin{tikzcd}
	{({G \downarrow F}, J_{G,f})} & {({\mathcal{B}},L)} \\
	{({\mathcal{C}},J)} & {({\mathcal{D}},K)}
	\arrow["{\pi_0}", from=1-1, to=1-2]
	\arrow[""{name=0, anchor=center, inner sep=0}, "{\pi_1}"', "\shortmid"{marking}, from=1-1, to=2-1]
	\arrow[""{name=1, anchor=center, inner sep=0}, "{G}", "\shortmid"{marking}, from=1-2, to=2-2]
	\arrow["f"', from=2-1, to=2-2]
	\arrow["{{\lambda_{G,F}}}"{description}, draw=none, from=1, to=0]
\end{tikzcd}\]
\end{lemma}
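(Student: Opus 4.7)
The plan is to define $J_{G,f}$ as the Grothendieck topology on $G \downarrow f$ generated by the sieves $\pi_1^{-1}(S)$ for $(b,c,\alpha) \in G \downarrow f$ and $S \in J(c)$, where $\pi_1^{-1}(S)$ consists of those comma morphisms $(s,t) : (b', c', \alpha') \to (b, c, \alpha)$ whose second component $t$ lies in $S$. With this definition, cover-lifting of $\pi_1$ is built-in: any $J$-cover $S$ of $c = \pi_1(b,c,\alpha)$ lifts to the generating $J_{G,f}$-cover $\pi_1^{-1}(S)$ on $(b,c,\alpha)$.

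The two remaining properties concern $\pi_0$. For cover-preservation, I first prove that projections of generators are $L$-covers of $b$. Given $S \in J(c)$, I would chain together cover-preservation of $f$ to get $\lext_f S \in K(f(c))$, stability of $K$ along $\alpha : G(b) \to f(c)$ to get $\alpha^*(\lext_f S) \in K(G(b))$, and finally cover-lifting of $G$ to produce an $L$-cover $U$ of $b$ with $G(U) \subseteq \alpha^*(\lext_f S)$; each $s \in U$ then yields a factorization $\alpha G(s) = f(t)\alpha'$ with $t \in S$ and $\alpha' : G(b') \to f(c')$, witnessing a comma morphism $(s,t) \in \pi_1^{-1}(S)$ with $\pi_0(s,t) = s$, so $U \subseteq \pi_0(\pi_1^{-1}(S))$. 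I would then extend this to arbitrary $J_{G,f}$-covers by induction along the closure under stability and transitivity, using the corresponding axioms of $L$ on $\mathcal{B}$ to absorb the refinements. For covering-flatness, given a finite diagram $((b_i,c_i,\alpha_i))_{i \in I}$ in $G \downarrow f$ with a cone $(u_i : b \to b_i)$ in $\mathcal{B}$, I would apply covering-flatness of $f$ to the induced cone $(\alpha_i G(u_i) : G(b) \to f(c_i))$ over the diagram $(c_i)$ in $\mathcal{C}$, and then lift the resulting $K$-cover on $G(b)$ through $G$ to an $L$-cover $U$ of $b$; each $s \in U$ carries data $G(s) = f(a_i) u$ for some cone $(a_i : c \to c_i)$ in $\mathcal{C}$ and some $u : G(b') \to f(c)$, so setting $\alpha' := u$ makes $(b', c, \alpha')$ an object of $G \downarrow f$ and $(s, a_i)$ a cone in the comma over the diagram, exhibiting $s = \pi_0(s, a_i)$.

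The main obstacle is the inductive extension in the cover-preservation argument: one has to check that stability and transitivity of $J_{G,f}$ preserve the invariant that $\pi_0$-images generate $L$-covers, which requires tracking how comma morphisms pull back and compose. The critical point is that a pullback of $\pi_1^{-1}(S)$ along $(s,t)$ coincides with $\pi_1^{-1}(t^*S)$, so stability on generators reduces to stability of $J$; transitivity then follows from transitivity of $J$ combined with the base case above. This is precisely where the interplay of cover-preservation of $f$, cover-lifting of $G$, and the axioms of $J, K, L$ all enter simultaneously, explaining why the three conditions together are exactly what is needed to make the comma square exist in $\Site^{\natural}_{\lax}$.
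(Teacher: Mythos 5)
Your proposal is correct and follows essentially the same route as the paper: the topology on $G \downarrow f$ is generated from $J$-covers of $c$ transported into the comma, $\pi_1$ lifts covers by construction, cover-preservation of $\pi_0$ comes from the same chain (cover-preservation of $f$, stability of $K$, cover-lifting of $G$), and covering-flatness of $\pi_0$ is obtained by applying covering-flatness of $f$ to the cone $(\alpha_i G(u_i))$ and lifting the resulting $K$-cover through $G$ --- the only real difference being that the paper's generating sieves $G \downarrow S$ also constrain the $\mathcal{B}$-component to lie in $\rest_G(\alpha^{*}\lext_f S)$, so that $\lext_{\pi_0}$ of a generator is exactly an $L$-cover, whereas your $\pi_1^{-1}(S)$ merely contains one (both choices work, and your pullback identity $(s,t)^{*}\pi_1^{-1}(S)=\pi_1^{-1}(t^{*}S)$ does make the extension to the generated topology routine, a point the paper leaves implicit). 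Two small slips to repair in the flatness paragraph: the equation supplied by covering-flatness of $f$ is $\alpha_i G(u_i)G(s)=f(a_i)u$, not $G(s)=f(a_i)u$, and the induced cone in the comma out of $(b',c,u)$ has legs $(u_i s, a_i)$ rather than $(s,a_i)$, after which $s$ lies in the required sieve via the identity factorization.
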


\begin{proof}
 Take an object of the comma $ u : G(b) \rightarrow f(c)$: since $ f$ is a morphism of site, for any sieve $S$ generated by a cover $ (\gamma_i : c_j \rightarrow c)_{j \in I} $ in $J(c)$, the (sieve $ \lext_f(S)$ generated by the) family $ (f(\gamma_j) : f(c_j) \rightarrow f(c))_{j \in J}$ is in $K(f(c))$, and by stability of coverages, so is the pullback sieve $ u^*\lext_f(S)$ containing all arrows $v : d \rightarrow G(b)$ such that $uv$ factorizes through some $f(\gamma_j)$. But as $G$ is a comorphism of sites, there exists some sieve $ R$ in $L(b)$, generated by some cover $ (\beta_i : b_i \rightarrow b)_{i \in I}$ such that the image $ \lext_G(R)$ is in $u^*\lext_f(S)$ (that is, $\rest_G(u^*\lext_f(S))$ is in $ L(b)$).
 
 Hence we can define the topology $J_{G,f}$ as the topology generated from sieves of the following form with 
 \[  G \downarrow S = \bigg{\{} 
\begin{tikzcd}
	{G(b')} & {f(c')} \\
	{G(b)} & {f(c)}
	\arrow["{\exists u'}", from=1-1, to=1-2]
	\arrow["{G(\beta)}"', from=1-1, to=2-1]
	\arrow["{f(\gamma)}", from=1-2, to=2-2]
	\arrow["u"', from=2-1, to=2-2]
\end{tikzcd} \mid \beta \in \rest_G(u^*\lext_f(S)), \gamma \in S \bigg{\}} \]
Then for any such $G\downarrow S$, the sieve $ \lext_{\pi_0}(G\downarrow S)$ is $ \rest_G(u^*\lext_f(S))$ which is covering by the observation above so that $ \pi_0$ preserves covers, while its projection $ \pi_1(G \downarrow S)$ is contained in $S$ as it contains all arrows in $S$ through which some composite $ uv$ with $v \in \rest_G(u^*\lext_f(S))$ factorizes, and it can be used to lift any cover $S$ of $f(c) = \pi_1(u)$, which makes $ \pi_1$ a comorphism. 

To finish, we have to prove that $ \pi_0$ is covering-flat. Let be $ (v_i : b \rightarrow b_i)_{i \in I}$ a cone in $\mathcal{D}$ with $I$ finite, together with a specified diagram $ (u_i : G(b_i) \rightarrow f(c_i))_{i \in I}$ in $ G \downarrow f$; we must show that the set of all arrows $ w : b' \rightarrow b$ such that there exists a cone 
\[\begin{tikzcd}
	{G(b'')} & {f(c'')} \\
	{G(b_i)} & {f(c_i)}
	\arrow["{u''}", from=1-1, to=1-2]
	\arrow["{G(t_i)}"', from=1-1, to=2-1]
	\arrow["{f(a_i)}", from=1-2, to=2-2]
	\arrow["{u_i}"', from=2-1, to=2-2]
\end{tikzcd}\]
together with a factorization 
\[\begin{tikzcd}
	{b'} & b \\
	{b''} & {b_i}
	\arrow["w", from=1-1, to=1-2]
	\arrow["{\exists s}"', from=1-1, to=2-1]
	\arrow["{v_i}", from=1-2, to=2-2]
	\arrow["{t_i}"', from=2-1, to=2-2]
\end{tikzcd}\]
is $L$-covering. For $f$ is covering flat (being a morphism of sites), the set of all arrows $ w: d \rightarrow G(b)$ such that there is a cone $(a_i :c \rightarrow c_i)_{i\in I}$ together with a factorization
\[\begin{tikzcd}
	d & {f(c)} \\
	{G(b)} \\
	{G(b_i)} & {f(c_i)}
	\arrow["{\exists s}", from=1-1, to=1-2]
	\arrow["w"', from=1-1, to=2-1]
	\arrow["{f(a_i)}", from=1-2, to=3-2]
	\arrow["{G(v_i)}"', from=2-1, to=3-1]
	\arrow["{u_i}"', from=3-1, to=3-2]
\end{tikzcd}\]
is $K$-covering over $G(d)$; but $G$ is a comorphism of sites, so the restriction of this sieve along $G$ is $L$-covering: but if $w : b' \rightarrow b$ is in this restriction, then one has a cone $(a_i :c \rightarrow c_i)_{i \in I}$ as above together with a factorization
\[\begin{tikzcd}
	{G(b')} & {f(c)} \\
	{G(b)} \\
	{G(b_i)} & {f(c_i)}
	\arrow["{\exists s}", from=1-1, to=1-2]
	\arrow["{G(w)}"', from=1-1, to=2-1]
	\arrow["{f(a_i)}", from=1-2, to=3-2]
	\arrow["{G(v_i)}"', from=2-1, to=3-1]
	\arrow["{u_i}"', from=3-1, to=3-2]
\end{tikzcd}\]
so that $w$ is in the sieve we must show to be covering.
\end{proof}

Since the Beck-Chevalley 2-cell $\overline{\lambda_{G,F}}^\flat$ induced by $ \lambda_{G,F}$ is already invertible in the presheaf topos $ \widehat{\mathcal{B}}$ by exactness of comma squares, it remains invertible in the sheaf topos and we have:

\begin{corollary}
    Let $f : (\mathcal{C},J) \rightarrow (\mathcal{D},K)$ be a morphism and $G :( \mathcal{B}, L) \rightarrow (\mathcal{D},K) $ a comorphism, we have an invertible 2-cell between geometric morphisms
\[\begin{tikzcd}
	{\widehat{{G \downarrow F}}_{J_{G,f}}} & {\widehat{{\mathcal{B}}}_L} \\
	{\widehat{{\mathcal{C}}}_J} & {\widehat{{\mathcal{D}}}_K}
	\arrow[""{name=0, anchor=center, inner sep=0}, "{C_{\pi_1}}"', from=1-1, to=2-1]
	\arrow["{\Sh(\pi_0)}"', from=1-2, to=1-1]
	\arrow[""{name=1, anchor=center, inner sep=0}, "{C_G}", from=1-2, to=2-2]
	\arrow["{\Sh(f)}", from=2-2, to=2-1]
	\arrow["{{\lambda_{G,F}} \atop \simeq}"{description}, draw=none, from=1, to=0]
\end{tikzcd}\]
\end{corollary}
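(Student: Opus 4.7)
The plan is to observe that this corollary is essentially immediate once we combine the preceding lemma with the classical exactness of comma squares. By the lemma just established, the square $\lambda_{G,f}$ equipped with the topology $J_{G,f}$ on $G\downarrow f$ is a legitimate double cell in $\Site^\natural_\lax$, so the sheafification double functor $\Sh$ produces a 2-cell $\widetilde{\lambda_{G,f}}^\flat$ between the asserted geometric morphisms. The only remaining content is to prove this 2-cell is invertible.

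First, I would invoke Guitart's cited result stating that the underlying comma square is exact, which gives an isomorphism $\overline{\lambda_{G,f}}^\flat : \lext_{\pi_0}\rest_{\pi_1} \xRightarrow{\sim} \rest_G \lext_f$ already at the presheaf level, before any topology is taken into account. Then, since $\widetilde{\lambda_{G,f}}^\flat$ is obtained by postcomposing $\overline{\lambda_{G,f}}^\flat$ with the sheafification functor $\mathfrak{a}_L$ (using that $\lext_f,\rest_G,\lext_{\pi_0},\rest_{\pi_1}$ descend through sheafification to $\Sh(f)^*,C_G^*,\Sh(\pi_0)^*,C_{\pi_1}^*$ as recalled in the construction of the double functor $\Sh$), and since sheafification is a functor that preserves isomorphisms, the resulting double cell is invertible.

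Equivalently, one can phrase the argument through the corollary just preceding this statement: every exact square is \emph{a fortiori} locally exact (the condition that $\overline{\phi}^\flat$ is inverted after sheafification is weaker than its being invertible already at the presheaf level), so by that corollary the double cell $\lambda_{G,f}$ is inverted by $\Sh$. This route has the advantage of emphasizing that the corollary is the most trivial illustration of local exactness, whereas the more interesting examples will involve squares which only become invertible after sheafification.

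No step poses a real obstacle here: the whole content has been packed into the preceding lemma (constructing $J_{G,f}$ and verifying the cover-preservation, cover-lifting, and covering-flatness conditions for the projections) and into the classical exactness of comma squares. The corollary is simply the joint application of these two facts through the double functoriality of $\Sh$.
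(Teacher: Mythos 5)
Your proposal is correct and matches the paper's own argument: the paper likewise observes that the Beck--Chevalley cell $\overline{\lambda_{G,f}}^\flat$ is already invertible at the presheaf level by Guitart's exactness of comma squares, and hence remains invertible after applying the sheafification double functor, given that the preceding lemma makes $\lambda_{G,f}$ a genuine double cell of $\Site^\natural_\lax$. Your remark that this exhibits the comma square as a (trivially) locally exact square is also exactly the perspective the paper takes.
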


\begin{division}
The result above applies in particular to the following case: let $ f:(\mathcal{C},J) \rightarrow (\mathcal{D},K)$ a morphism of site; then the projections $ \pi_\mathcal{C}^f : \mathcal{D} \downarrow f \rightarrow \mathcal{C}$ and $ \pi_\mathcal{D}^f : \mathcal{D} \downarrow f \rightarrow \mathcal{D}$ are both comorphism of sites (see \cite{caramello2020denseness}[Theorem 3.16] and $ \pi_{\mathcal{D}}^f$ is also a morphism of site and is moreover dense, so we have a double cell of the following form:
\[\begin{tikzcd}
	{(\mathcal{D}\downarrow f,K_f)} & {(\mathcal{D},K)} \\
	{(\mathcal{C},J)} & {(\mathcal{D},K)}
	\arrow["{\pi_\mathcal{D}^f}", from=1-1, to=1-2]
	\arrow["{\pi_\mathcal{C}^f}"', "\shortmid"{marking}, from=1-1, to=2-1]
	\arrow["{\lambda_f}"{description}, draw=none, from=1-1, to=2-2]
	\arrow["\shortmid"{marking}, Rightarrow, no head, from=1-2, to=2-2]
	\arrow["f"', from=2-1, to=2-2]
\end{tikzcd}\]
where $ K_f$ is the Giraud topology; then by exactness of the comma square $ \lambda_f$, we know this square is inverted by sheafification, and as $ \pi_\mathcal{D}^f$ is dense, is induces an equivalence $\widehat{{G \downarrow F}}_{K_f} \simeq \widehat{\mathcal{D}}_K $ we end up with an invertible quintet, where $ \widetilde{\lambda_f}$ is an invertible geometric transformation: 
\[\begin{tikzcd}[column sep=large]
	{\widehat{{G \downarrow F}}_{K_f}} & {\widehat{\mathcal{D}}_K} \\
	{\widehat{\mathcal{C}}_J} & {\widehat{\mathcal{D}}_K}
	\arrow["{C_{\pi_\mathcal{C}^f}}"', "\shortmid"{marking}, from=1-1, to=2-1]
	\arrow["{\widetilde{\lambda_f}}"{description}, draw=none, from=1-1, to=2-2]
	\arrow["\begin{array}{c} \Sh(\pi_\mathcal{D}^f) \atop \simeq \end{array}"', from=1-2, to=1-1]
	\arrow["\shortmid"{marking}, equals, from=1-2, to=2-2]
	\arrow["{\Sh(f)}", from=2-2, to=2-1]
\end{tikzcd}\]
This quintet is an instance of a \emph{companion cell}, which we will describe in the next subsection.
\end{division}

\begin{division}
    A dual class of exact squares is provided by \emph{cocomma squares}. For two functors $ f : \mathcal{C} \rightarrow \mathcal{B}$, $G : \mathcal{C} \rightarrow \mathcal{D}$, the cocomma $ f \uparrow G$ is the category defined as follows:\begin{itemize}
        \item it has as objects the disjoint union of the objects of $ \mathcal{B}$ and $\mathcal{D}$, denoted $ (0,b)$ and $(1,b)$
        \item it has as morphisms those $(0,v) :(0,b) \rightarrow (0,d')$ with $ v : b \rightarrow b'$ in $\mathcal{B}$, those $(1,w) : (1, d) \rightarrow (1,d')$ for $ w: d \rightarrow d'$ in $\mathcal{D}$, together with the classes of formal composites
\[\begin{tikzcd}
	{(0,b)} & {(0,f(c))} & {(1,g(c))} & {(1,d)}
	\arrow["{(0,v)}", from=1-1, to=1-2]
	\arrow["{\lambda_c}", from=1-2, to=1-3]
	\arrow["{(1,w)}", from=1-3, to=1-4]
\end{tikzcd}\]
    modulo the zigzag relation equating pastings along naturality squares of $\lambda$
\[\begin{tikzcd}
	& {(0,f(c))} & {(1,g(c))} \\
	{(0,b)} & \cdots & \cdots & {(1,d)} \\
	& {(0,f(c'))} & {(1,g(c'))}
	\arrow["{\lambda_c}", from=1-2, to=1-3]
	\arrow["{(1,w)}", from=1-3, to=2-4]
	\arrow["{(0,v)}", from=2-1, to=1-2]
	\arrow[from=2-1, to=2-2]
	\arrow["{(0,v')}"', from=2-1, to=3-2]
	\arrow["{(0,f(u))}"{description}, from=2-2, to=1-2]
	\arrow["{(0,f(u'))}"{description}, from=2-2, to=3-2]
	\arrow["{(1,g(u))}"{description}, from=2-3, to=1-3]
	\arrow[from=2-3, to=2-4]
	\arrow["{(1,g(u'))}"{description}, from=2-3, to=3-3]
	\arrow["{\lambda_{c'}}"', from=3-2, to=3-3]
	\arrow["{(1,w')}"', from=3-3, to=2-4]
\end{tikzcd}\]
    \end{itemize}
\end{division}

\begin{lemma}[{\cite{guitart1980carrésexact}[example 1.14 (3)] }]
   For any $f$, $G$, the cocomma square below is exact
\[\begin{tikzcd}
	{\mathcal{C}} & {\mathcal{B}} \\
	{\mathcal{D}} & {f \uparrow G}
	\arrow["f", from=1-1, to=1-2]
	\arrow[""{name=0, anchor=center, inner sep=0}, "G"', from=1-1, to=2-1]
	\arrow[""{name=1, anchor=center, inner sep=0}, "{\iota_0}", from=1-2, to=2-2]
	\arrow["{\iota_1}"', from=2-1, to=2-2]
	\arrow["{\lambda_{f,G}}"', shorten <=7pt, shorten >=7pt, Rightarrow, from=1, to=0]
\end{tikzcd}\]
\end{lemma}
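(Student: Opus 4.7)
The plan is to invoke Guitart's criterion for exactness recalled earlier in the section and observe that the construction of the cocomma category $f \uparrow G$ is tailored to satisfy it by design. With the identifications $k = \iota_0$, $h = \iota_1$ and $\phi = \lambda_{f,G}$, the lax square is exact if and only if:
\begin{itemize}
    \item every morphism $u : \iota_0(b) \to \iota_1(d)$ in $f \uparrow G$ admits a twisted decomposition $u = \iota_1(t) \circ \lambda_c \circ \iota_0(s)$ for some $c \in \mathcal{C}$, $s : b \to f(c)$ in $\mathcal{B}$ and $t : G(c) \to d$ in $\mathcal{D}$;
    \item any two such decompositions of a same $u$ are related by a zigzag of intermediate decompositions through the naturality squares of $\lambda$.
\end{itemize}

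The first step will be to unpack the set $(f \uparrow G)((0,b), (1,d))$ using the presentation given just above the lemma: its elements are, by definition, equivalence classes of formal composites
\[ (0, b) \xrightarrow{(0, s)} (0, f(c)) \xrightarrow{\lambda_c} (1, G(c)) \xrightarrow{(1, t)} (1, d) \]
which is literally a twisted decomposition in the sense of Guitart's criterion, giving (i) immediately. The second step is to observe that the equivalence relation defining morphisms in $f \uparrow G$ is generated by the zigzags through naturality squares of $\lambda$, which is exactly the zigzag relation appearing in (ii); two composites $\iota_1(t) \lambda_c \iota_0(s)$ and $\iota_1(t') \lambda_{c'} \iota_0(s')$ represent the same morphism in $f \uparrow G$ if and only if they are connected by such a zigzag, yielding the second clause of the criterion. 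For maps purely within the $(0,-)$ or $(1,-)$ copies, the source or target is a single-sorted object, so those components are not involved in the test for homs of the form $\iota_0(b) \to \iota_1(d)$.

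The construction of the cocomma is essentially \emph{engineered} to make Guitart's criterion hold on the nose; the only point requiring care is the verification that the hom-sets of $f \uparrow G$ contain no extra morphisms $(0, b) \to (1, d)$ beyond these zigzag classes — which is immediate from the explicit description of morphisms above. Since this exact statement appears in \cite{guitart1980carrésexact}, one could alternatively just cite it, but the conceptual content of the argument is simply the remark that the definition of cocomma and the explicit characterization of exactness are one and the same when read from opposite sides.
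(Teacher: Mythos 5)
Your argument is correct. The paper itself gives no proof of this lemma (it is simply cited from Guitart), but your direct verification is the natural one and matches the paper's setup: with the recalled criterion, exactness amounts to the canonical map $\int^{c}\mathcal{B}[b,f(c)]\times\mathcal{D}[G(c),d]\to (f\uparrow G)[(0,b),(1,d)]$ being bijective, and the presentation of the cocomma given just before the lemma defines that hom-set precisely as the set of formal composites $(1,t)\,\lambda_c\,(0,s)$ modulo the zigzag relation, i.e.\ as that very coend, so the comparison map (which sends a pair $(s,t)$ to the class of its formal composite) is a bijection by construction. The only point you leave implicit, and which is worth a sentence, is the identification of the canonical comparison 2-cell of the criterion with the map ``take the class of the formal composite''; this is immediate from the description of $\overline{\phi}^{\flat}$ on representables, so there is no gap.
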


Then the square as below define an invertible 2-cell in $\Top$
\[\begin{tikzcd}
	{\widehat{\mathcal{C}}} & {\widehat{\mathcal{B}}} \\
	{\widehat{\mathcal{D}}} & {\widehat{f \uparrow G}}
	\arrow[""{name=0, anchor=center, inner sep=0}, "{C_G}"', from=1-1, to=2-1]
	\arrow["{\Sh(f)}"', from=1-2, to=1-1]
	\arrow[""{name=1, anchor=center, inner sep=0}, "{C_{\iota_0}}", from=1-2, to=2-2]
	\arrow["{\Sh(\iota_1)}", from=2-2, to=2-1]
	\arrow["{\overline{\lambda_{f,G}}}"', shorten <=7pt, shorten >=7pt, Rightarrow, from=1, to=0]
\end{tikzcd}\]

\begin{lemma}
For a morphism of sites $f : (\mathcal{C},J) \rightarrow (\mathcal{B},L)$ and a comorphism of sites $G : (\mathcal{C},J) \rightarrow (\mathcal{D},K)$, there is a topology on $ f \uparrow G$ that makes $ \iota_0 : \mathcal{B} \rightarrow f \uparrow G$ a comorphism of sites and $ \iota_1 : \mathcal{D} \rightarrow f \uparrow G$ a morphism of sites so we have a double cell of $\Site^\natural_\lax$ as below
\[\begin{tikzcd}
	{(\mathcal{C},J)} & {(\mathcal{B},L)} \\
	{(\mathcal{D},K)} & {(f \uparrow G, J_{f,G})}
	\arrow["f", from=1-1, to=1-2]
	\arrow[""{name=0, anchor=center, inner sep=0}, "G"', "\shortmid"{marking}, from=1-1, to=2-1]
	\arrow[""{name=1, anchor=center, inner sep=0}, "{\iota_0}", "\shortmid"{marking}, from=1-2, to=2-2]
	\arrow["{\iota_1}"', from=2-1, to=2-2]
	\arrow["{\lambda_{f,G}}"{description}, draw=none, from=1, to=0]
\end{tikzcd}\]
\end{lemma}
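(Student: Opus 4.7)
The plan is to dualize the comma construction of the previous lemma, using covering-flatness of the morphism $f$ in place of where it was previously invoked. Define $J_{f,G}$ on the cocomma $f \uparrow G$ as the Grothendieck topology generated by two families: the sieves $\iota_{0,*}(R) = \{(0,v) \mid v \in R\}$ on $(0,b)$ for each $R \in L(b)$, and the sieves $\lext_{\iota_1}(T)$ on $(1,d)$ for each $T \in K(d)$, closed under stability, transitivity and maximality. By construction, $\iota_1$ is cover-preserving.

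The cover-lifting property of $\iota_0$ rests on a structural observation about the cocomma: no formal composite $(1,w)\lambda_c(0,v)$ lands in any $(0,-)$-object, so arrows into $(0,b)$ are exactly those of the form $(0,v)$ with $v : b' \to b$ in $\mathcal{B}$. Every sieve on $(0,b)$ is thus of the shape $\iota_{0,*}(R)$ for a unique sieve $R$ on $b$, and the topology closures preserve this shape: pullbacks along $(0,v)$-type arrows give $\iota_{0,*}(v^*R)$ by stability of $L$, while the $\iota_1$-generated sieves live entirely on $(1,-)$-objects and cannot contribute to $(0,b)$-sieves. Hence $J_{f,G}$-covers on $(0,b)$ correspond bijectively to $L$-covers on $b$, so $\iota_0$ is cover-lifting.

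The substantive verification is covering-flatness of $\iota_1$. Given a finite diagram $(d_i)_{i \in I}$ in $\mathcal{D}$ and a cone $(v_i : X \to \iota_1(d_i))$ in $f \uparrow G$: if $X = (1, d_0)$, fullness of $\iota_1$ yields a cone in $\mathcal{D}$ directly and the required sieve is maximal. If $X = (0, b)$, each $v_i$ is represented by $(1, w_i)\lambda_{c_i}(0, v_i')$ with $v_i' : b \to f(c_i)$ and $w_i : g(c_i) \to d_i$. Applying covering-flatness of $f$ to the cone $(v_i')_{i \in I}$ in $\mathcal{B}$ produces an $L$-covering sieve of arrows $w' : b' \to b$, each equipped with a cone $(u_i : c \to c_i)_{i \in I}$ in $\mathcal{C}$ and a factorization $v_i' w' = f(u_i) v$ for some $v : b' \to f(c)$. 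Setting $d := g(c)$ and $a_i := w_i g(u_i)$, the morphism $\lambda_c (0, v) : (0, b') \to \iota_1(d)$ is the required factorization: by naturality of $\lambda$ at $u_i$, one computes
\[
(1, a_i)\,\lambda_c (0, v) = (1, w_i)(1, g(u_i))\,\lambda_c (0, v) = (1, w_i)\,\lambda_{c_i}(0, f(u_i) v) = (1, w_i)\,\lambda_{c_i}(0, v_i' w').
\]
Transporting this $L$-cover through $\iota_0$ yields the required $J_{f,G}$-cover on $(0, b)$, and covering-flatness follows.

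The main obstacle is the bookkeeping surrounding the zigzag equivalence in the cocomma: one must verify that pullbacks of $\iota_1$-generated sieves along formal composites $(1, w)\lambda_c(0, v) : (0, b') \to (1, d)$, when taken modulo zigzag, really reduce to $L$-covering sieves on $b'$ via $\iota_0$. A careful analysis of the normal form of cocomma morphisms $(0,-) \to (1,-)$ should make this transparent, after which the remaining checks proceed routinely.
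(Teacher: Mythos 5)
There is a genuine gap, and it sits exactly where you parked it as ``the main obstacle''. Your topology is generated by the sieves $\iota_{0*}(R)$, $R\in L(b)$, and $\lext_{\iota_1}(T)$, $T\in K(d)$, and you then close under stability. But closing under stability forces you to pull back the sieves $\lext_{\iota_1}(T)$ along the formal composites $(1,w)\lambda_c(0,a):(0,b)\rightarrow(1,d)$, and these pullbacks are sieves on $(0,b)$ which are \emph{not} known to be of the form $\iota_{0*}(R)$ with $R$ an $L$-cover. So your assertion that ``$J_{f,G}$-covers on $(0,b)$ correspond bijectively to $L$-covers on $b$'' is unjustified, and with it the cover-lifting of $\iota_0$. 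A telling symptom is that your proof never uses the hypothesis that $G$ is a comorphism of sites (nor that $f$ is cover-preserving): without these the statement is false, since nothing would then force the pullbacks of the $(1,-)$-side covers to restrict along $\iota_0$ to $L$-covers. This is precisely the step the paper does not defer: there the topology on $(0,b)$ is generated not only by $\lext_{\iota_0}(S)$ for $S\in L(b)$ but also by the pullback sieves $(\lambda_c a)^*R$ for $R\in K(G(c))$ and $a:b\rightarrow f(c)$, and the lifting of the latter is carried out explicitly --- $G$ a comorphism gives $T\in J(c)$ with $\lext_G(T)\leq R$, $f$ a morphism gives $\lext_f(T)\in L(f(c))$, stability of $L$ gives $a^*\lext_f(T)\in L(b)$, and then $\lext_{\iota_0}(a^*\lext_f(T))\leq(\lambda_c a)^*R$. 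Your deferred ``careful analysis of normal forms'' would have to reproduce exactly this argument; it is not routine bookkeeping but the heart of the lemma.

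The rest of your proposal is in order: the observation that all arrows into $(0,b)$ come from $\mathcal{B}$ is correct, $\iota_1$ is cover-preserving by construction, and your verification of covering-flatness of $\iota_1$ (splitting on the apex, and for apex $(0,b)$ applying covering-flatness of $f$ to the $\mathcal{B}$-legs $v_i'$ and transporting the resulting $L$-cover through $\iota_0$ via naturality of $\lambda$) is essentially the paper's argument. To repair the proof, either add the pullback sieves $(\lambda_c a)^*R$, $R\in K(G(c))$, to your generating coverage and lift them as above, or carry out the deferred computation showing that the stability-closure pullbacks on $(0,b)$ already contain images of $L$-covers; both routes require invoking the comorphism property of $G$ together with cover-preservation of $f$.
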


\begin{proof}
    The topology $ J_{f,G}$ is defined by cases as generated as follows:\begin{itemize}
        \item on $(0,b)$ consider as generating sieves thoses $\lext_{\iota_0}(S)$ for $ S \in L(b)$ as well as pullback sieves of the form $ (\lambda_c a)^*R$ for $R \in K(G(c))$ with $a : b \rightarrow f(c)$ in $\mathcal{B}$. 
        \item on $(1,d)$ consider as generating those $ \lext_{\iota_1}(R)$ for $ R \in K(d)$.
    \end{itemize}
First let us show that $ \iota_0$ is a comorphism. We saw there are two types of generating covers over $ (0,b)$; for those of the form $ \lext_{\iota_0}(S)$, $S$ provides trivially a lift; for those of the form $(\lambda_c a)^*R$, observe that $R \in K(G(c))$, so as $G$ is a comorphism of sites, there is some $T \in J(c)$ such that $ \lext_G(T) \leq R$, and as $f$ is a morphism of sites, $ \lext_f(T) \in L(f(c))$, so $a^*\lext_f(T) \in L(b)$, and then we have a lift $\lext_{\iota_0}(a^*\lext_f(T)) \leq (\lambda_c a)^*R $. On the other hand $\iota_1$ clearly sends $K$-covering sieves to generating sieves.

To finish we musht show $\iota_1$ to be covering flat. Let be a finite diagram $ (d_i)_{i \in I}$ in $\mathcal{D}$ together with a cone in $ f \uparrow G$ given by the classes under zigzag of arrows of the form 
\[\begin{tikzcd}
	{(0,b)} & {(0,f(c_i))} & {(1,G(c_i))} & {(1, d_i)}
	\arrow["{(0,u_i)}", from=1-1, to=1-2]
	\arrow["{\lambda_{c_i}}", from=1-2, to=1-3]
	\arrow["{(1, v_i)}", from=1-3, to=1-4]
\end{tikzcd}\]
    As $f$ is covering flat, the set $S$ of all $w: b' \rightarrow b$ such that there is a cone $ (a_i : c \rightarrow c_i)$ together with a factorization
\[\begin{tikzcd}
	{b'} & {f(c)} \\
	b & {f(c_i)}
	\arrow["{\exists s}", from=1-1, to=1-2]
	\arrow["w"', from=1-1, to=2-1]
	\arrow["{f(a_i)}", from=1-2, to=2-2]
	\arrow["{u_i}"', from=2-1, to=2-2]
\end{tikzcd}\]
    is $ L$-covering; but for any $w$ in this sieve $S$, one can consider the composite cone $ (v_iG(a_i) : G(c) \rightarrow d_i)_{i \in I}$ in $ \mathcal{D}$ and we have a factorization 
\[\begin{tikzcd}
	{(0,b')} & {(0,f(c))} & {(1,G(c))} \\
	{(0,b)} & {(0,f(c_i))} & {(1,G(c_i))} & {(1,d_i)}
	\arrow["{(0,s)}", from=1-1, to=1-2]
	\arrow["{(0,w)}"', from=1-1, to=2-1]
	\arrow["{\lambda_{c}}", from=1-2, to=1-3]
	\arrow["{(0,f(a_i))}"{description}, from=1-2, to=2-2]
	\arrow["{(1, G(a_i))}"{description}, from=1-3, to=2-3]
	\arrow["{(1, v_iG(a_i))}", from=1-3, to=2-4]
	\arrow["{(0,u_i)}"', from=2-1, to=2-2]
	\arrow["{\lambda_{c_i}}"', from=2-2, to=2-3]
	\arrow["{(1,v_i)}"', from=2-3, to=2-4]
\end{tikzcd}\]
    exhibiting $(0,w)$ as being in the sieve of arrows over $ (0,b)$ whose composite with the cone $ ((1,v_i)\lambda_{c_i}(0,u_i))_{i \in I}$ factorizes through a cone in the range of $ \iota_1$: hence this sieve contains $ \lext_{\iota_0}(S)$ which is covering by definition of $ J_{f,G}$.
\end{proof}

\subsection{Conjoints to companions, companions to conjoints}

  Many categorical notions are encapsulated by the exactness of some square, as listed in \cite{guitart1980carrésexact}[Examples 1.14]. Combined with our observation regarding sheafifications, such exactness criteria can be used to rephrase known results of topos theory as site-flavoured statements of formal category theory. This matter relates to a curious consequence of the horizontal contravariance of the sheafification functor: it interchanges two classes of double-cells, namely \emph{conjoints} and \emph{companions} squares between $ \Site^\natural$ and $ \Top^\square_\lax$. Let us first recall those two notions. 

\begin{definition}
  A horizontal cell $f$ and a vertical cell $g$ are said to be \emph{conjoint} (with $f$ being the \emph{right} and $g$ being the \emph{left} conjoints) if one has a pair of double cells as below
\[\begin{tikzcd}
	A & B \\
	A & A
	\arrow["f", from=1-1, to=1-2]
	\arrow[""{name=0, anchor=center, inner sep=0}, equals, from=1-1, to=2-1]
	\arrow[""{name=1, anchor=center, inner sep=0}, "G", "\shortmid"{marking}, from=1-2, to=2-2]
	\arrow[equals, from=2-1, to=2-2]
	\arrow["\epsilon"{description}, draw=none, from=1, to=0]
\end{tikzcd} \hskip1cm 
\begin{tikzcd}
	B & B \\
	A & B
	\arrow[equals, from=1-1, to=1-2]
	\arrow[""{name=0, anchor=center, inner sep=0}, "G"', "\shortmid"{marking}, from=1-1, to=2-1]
	\arrow[""{name=1, anchor=center, inner sep=0}, equals, from=1-2, to=2-2]
	\arrow["f"', from=2-1, to=2-2]
	\arrow["\eta"{description}, draw=none, from=1, to=0]
\end{tikzcd}\]
whose horizontal and vertical pasting satisfy
\[   \epsilon \bullet \eta = 1_G  \hskip1cm \eta \circ \epsilon  = \id_f \]
where $ 1_G$ and $\id_f$ are respectively the horizontal and vertical identities of $G$ and $f$.
\end{definition}

\begin{definition}
    A horizontal cell $f$ and a vertical cell $G$ are said to be \emph{companions} if one has a pair of double cells as below
\[\begin{tikzcd}
	A & B \\
	B & B
	\arrow["f", from=1-1, to=1-2]
	\arrow[""{name=0, anchor=center, inner sep=0}, "G"', "\shortmid"{marking}, from=1-1, to=2-1]
	\arrow[""{name=1, anchor=center, inner sep=0}, equals, from=1-2, to=2-2]
	\arrow[equals, from=2-1, to=2-2]
	\arrow["\phi"{description}, draw=none, from=1, to=0]
\end{tikzcd} \hskip1cm 
\begin{tikzcd}
	A & A \\
	A & B
	\arrow[equals, from=1-1, to=1-2]
	\arrow[""{name=0, anchor=center, inner sep=0}, equals, from=1-1, to=2-1]
	\arrow[""{name=1, anchor=center, inner sep=0}, "G", "\shortmid"{marking}, from=1-2, to=2-2]
	\arrow["f"', from=2-1, to=2-2]
	\arrow["\psi"{description}, draw=none, from=1, to=0]
\end{tikzcd}\]
whose horizontal and vertical pasting satisfy
\[ \phi \bullet \psi = 1_G \hskip1cm \psi \circ \phi = \id_f  \]
\end{definition}

In the double category $ \Site^\natural$, conjoint correspond to adjunction between a morphism and a comorphism; but this presentation as square connects it with the following formulation in terms of exact squares:

\begin{division}
  A functor $ f$ is right adjoint to $G$ in $\Cat$ if and only if the following square below is exact
\[\begin{tikzcd}
	{\mathcal{C}} & {\mathcal{D}} \\
	{\mathcal{C}} & {\mathcal{C}}
	\arrow["f", from=1-1, to=1-2]
	\arrow[""{name=0, anchor=center, inner sep=0}, equals, from=1-1, to=2-1]
	\arrow[""{name=1, anchor=center, inner sep=0}, "G", from=1-2, to=2-2]
	\arrow[equals, from=2-1, to=2-2]
	\arrow["\epsilon"', shorten <=6pt, shorten >=6pt, Rightarrow, from=1, to=0]
\end{tikzcd}\]
as this expresses that one has a natural isomorphism
\[ \lext_f \simeq \rest_G \]
which in particular, applied to the identity, exhibits $G^{\op}$ as the left Kan extension of $f^{\op}$, hence $ G$ as the left adjoint of $f$; this produces also a unit cell
\[\begin{tikzcd}
	{\mathcal{C}} & {\mathcal{C}} \\
	{\mathcal{D}} & {\mathcal{C}}
	\arrow[""{name=0, anchor=center, inner sep=0}, "G"', from=1-1, to=2-1]
	\arrow[equals, from=1-2, to=1-1]
	\arrow["f"', from=2-1, to=2-2]
	\arrow[""{name=1, anchor=center, inner sep=0}, equals, from=2-2, to=1-2]
	\arrow["\eta", shorten <=6pt, shorten >=6pt, Rightarrow, from=1, to=0]
\end{tikzcd}\]
which exhibits $ f,G$ as a conjoint pair in the double category $ \Cat^{\square}$. If now $ f$ is a flat functor and we put the trivial topology on $\mathcal{C}$, $\mathcal{D}$, so that $f$ is a morphism of sites and $G$ a comorphism of sites, then exactness of the double cell above says that the sheafification double functor sends this cell to an (invertible) companion pair
\[\begin{tikzcd}
	{\widehat{\mathcal{C}}} & {\widehat{\mathcal{D}}} \\
	{\widehat{\mathcal{C}}} & {\widehat{\mathcal{C}}}
	\arrow[""{name=0, anchor=center, inner sep=0}, equals, from=1-1, to=2-1]
	\arrow["{\Sh(f)}"', from=1-2, to=1-1]
	\arrow[""{name=1, anchor=center, inner sep=0}, "{C_G}", from=1-2, to=2-2]
	\arrow[equals, from=2-2, to=2-1]
	\arrow["{\widetilde{\epsilon} \atop\simeq}"{description}, draw=none, from=1, to=0]
\end{tikzcd} \hskip1cm  
\begin{tikzcd}
	{\widehat{\mathcal{C}}} & {\widehat{\mathcal{C}}} \\
	{\widehat{\mathcal{D}}} & {\widehat{\mathcal{C}}}
	\arrow[equals, from=1-1, to=1-2]
	\arrow[""{name=0, anchor=center, inner sep=0}, "{C_G}"', from=1-1, to=2-1]
	\arrow[""{name=1, anchor=center, inner sep=0}, equals, from=2-2, to=1-2]
	\arrow["{\Sh(f)}", from=2-2, to=2-1]
	\arrow["{\widetilde{\eta} \atop\simeq}"{description}, draw=none, from=0, to=1]
\end{tikzcd}\]
\end{division}

\begin{corollary}
    A flat functor $f : \mathcal{C} \rightarrow \mathcal{D}$ has a left adjoint $G$ if and only if they both induce the same geometric morphism $ C_G \simeq \Sh(f)$.
\end{corollary}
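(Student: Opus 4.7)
The plan is to treat the two implications separately, using the Beck--Chevalley correspondence $\overline{\epsilon}^\flat : \lext_f \Rightarrow \rest_G$ as the link between the categorical and topos-theoretic sides. The only-if direction is essentially a repackaging of the preceding division; the substantive content is the converse, which is read off from $\lext_f \simeq \rest_G$ via the Yoneda lemma.

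For the only-if direction, assume $G \dashv f$ with counit $\epsilon : Gf \Rightarrow \id_\mathcal{C}$. Equipping $\mathcal{C}$ and $\mathcal{D}$ with their trivial topologies, the flatness of $f$ makes it a morphism of sites while $G$ is automatically a comorphism, so $\epsilon$ constitutes a double cell of $\Site^\natural_\lax$. The counit square of an adjunction is exact (cf. \cite{guitart1980carrésexact}, Examples 1.14), hence $\overline{\epsilon}^\flat : \lext_f \Rightarrow \rest_G$ is already invertible in $\widehat{\mathcal{C}}$. Since sheafification is the identity under trivial topologies, this gives $\Sh(f)^* \simeq C_G^*$ and thus $\Sh(f) \simeq C_G$.

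For the converse, suppose $C_G \simeq \Sh(f)$. Taking inverse image parts and invoking the trivial topologies once more, we obtain $\lext_f \simeq \rest_G$ as functors $\widehat{\mathcal{C}} \to \widehat{\mathcal{D}}$. Evaluating on a representable $\hirayo_c$ yields $\lext_f \hirayo_c \simeq \hirayo_{f(c)}$ on one side and $\rest_G \hirayo_c = \mathcal{C}(G(-),c)$ on the other, so that for every $c \in \mathcal{C}$ and $d \in \mathcal{D}$ we obtain a bijection
\[ \mathcal{D}(d, f(c)) \simeq \mathcal{C}(G(d), c) \]
which is natural in $d$ because both sides are presheaves on $\mathcal{D}$, and natural in $c$ because the iso $\lext_f \simeq \rest_G$ is natural as a transformation of functors on presheaves. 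By Yoneda this is the hom-set presentation of an adjunction $G \dashv f$. The only point to verify, namely that the two naturalities package into a single bi-naturality of the adjunction bijection, is a direct consequence of the naturality of the ambient iso, so there is no genuine obstacle beyond this bookkeeping; the heart of the argument is simply that flatness together with trivial topologies make the sheafification double functor transparent at the level of extensions and restrictions.
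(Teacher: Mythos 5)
Your proof is correct and follows essentially the same route as the paper, which obtains the corollary from the preceding division: under trivial topologies $\Sh(f)^*=\lext_f$ and $C_G^*=\rest_G$, so the statement reduces to the equivalence between $G\dashv f$ and $\lext_f\simeq\rest_G$ (exactness of the counit square). The only cosmetic difference is that you extract the converse by evaluating the isomorphism on representables and applying Yoneda, whereas the paper phrases it via the left-Kan-extension-of-the-identity characterization of adjoints; these are the same argument in substance.
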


\begin{division}
If now we consider a morphism of site $ f : (\mathcal{C},J) \rightarrow (\mathcal{D},K)$, it is known that a left adjoint to $f$ must be a comorphism of sites, and by a classical result one can find for instance at \cite{elephant}[Scholium C2.5.2], they still induce the same geometric morphism between the associated sheaf topoi, and this rephrases as:
\end{division}

\begin{corollary}
    The sheafification double functor $ \Sh_\lax$ sends conjoints to companions. 
\end{corollary}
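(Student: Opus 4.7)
The statement unpacks as follows: a conjoint pair $(f, G)$ in $\Site^\natural_\lax$ amounts to a morphism of sites $f \colon (\mathcal{C}, J) \to (\mathcal{D}, K)$ together with a comorphism of sites $G \colon (\mathcal{D}, K) \to (\mathcal{C}, J)$ whose conjoint double cells $\epsilon, \eta$ are exactly the counit and unit of an underlying categorical adjunction $G \dashv f$. The goal is to produce companion witnesses $\widetilde{\epsilon}, \widetilde{\eta}$ in $\Top^\square_\lax$ for the parallel pair $\Sh(f), C_G \colon \widehat{\mathcal{D}}_K \to \widehat{\mathcal{C}}_J$, which is the same as producing a canonical isomorphism $\Sh(f) \simeq C_G$ of geometric morphisms.

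The plan is to show that the counit double cell $\epsilon$ is in fact \emph{exact}, and not merely locally exact. Recall from the subsection on adjoint squares that the Beck--Chevalley transformation of $\epsilon$ is a 2-cell $\overline{\epsilon}^\flat \colon \lext_f \Rightarrow \rest_G$ of functors $\widehat{\mathcal{C}} \to \widehat{\mathcal{D}}$. The adjunction $G \dashv f$ dualizes to $f^\op \dashv G^\op$ on opposite categories, so the left Kan extension of any presheaf $X \colon \mathcal{C}^\op \to \Set$ along $f^\op$ is computed simply as precomposition with the right adjoint $G^\op$, giving a canonical isomorphism $\lan_{f^\op} X \simeq X \circ G^\op = \rest_G X$. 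A universal-property chase identifies this adjunction-induced isomorphism with the formal Beck--Chevalley cell $\overline{\epsilon}^\flat$, so the latter is invertible in $\widehat{\mathcal{D}}$, independently of any topology. The unit cell $\eta$ provides by the dual argument the inverse 2-cell $\overline{\eta}^\flat \colon \rest_G \Rightarrow \lext_f$.

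Exactness implies local exactness, so applying $\Sh_\lax$ yields invertible double cells $\widetilde{\epsilon}, \widetilde{\eta}$ in $\Top^\square_\lax$. The horizontal contravariance of $\Sh_\lax$ is precisely what converts the geometric shape of a conjoint cell into that of a companion cell: the cells $\widetilde{\epsilon}, \widetilde{\eta}$ obtained this way have exactly the form of companion witnesses $\phi, \psi$ for the parallel pair $(\Sh(f), C_G)$. Double functoriality of $\Sh_\lax$ then transports the two conjoint equations $\epsilon \bullet \eta = 1_G$ and $\eta \circ \epsilon = \id_f$ to the corresponding companion equations satisfied by $\widetilde{\epsilon}, \widetilde{\eta}$.

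The only genuinely delicate point is identifying the formal Beck--Chevalley cell $\overline{\epsilon}^\flat$, which is assembled by pasting the counit of $\lext_h \dashv \rest_h$ with $\rest_\epsilon$ and the unit of $\lext_f \dashv \rest_f$, with the adjunction-induced isomorphism $\lext_f \simeq \rest_G$ coming from $f^\op \dashv G^\op$. This is a routine but slightly finicky diagram chase once one writes out both 2-cells via their defining universal properties; after it is settled, no further site-level work is required, since the comorphism hypothesis on $G$ is only used to guarantee the existence of $C_G$, and the isomorphism $\Sh(f) \simeq C_G$ descends automatically from presheaves to sheaves.
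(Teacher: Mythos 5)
Your proposal is correct, but it reaches the corollary by a different route than the paper does. The paper treats the general (nontrivial topologies) case by quotation: it observes that a left adjoint $G$ of a morphism of sites $f$ is automatically a comorphism and then invokes the classical fact (Johnstone, Scholium C2.5.2) that $f$ and $G$ induce the same geometric morphism, so the corollary is a mere ``rephrasing''; the presheaf-level exactness argument ($G \dashv f$ iff the counit square is exact, i.e.\ $\lext_f \simeq \rest_G$) is only spelled out there for the trivial topologies. You instead make the exactness argument carry the whole weight: the counit and unit squares of the adjunction are exact before any sheafification (this is the standard Guitart fact, equivalently $\lan_{f^{\op}} \simeq (-)\circ G^{\op}$ from $f^{\op} \dashv G^{\op}$), exactness is preserved by passing to sheaves since the sheaf-level cell is obtained from the presheaf-level one by whiskering with sheafification and inclusion, and double functoriality of $\Sh_\lax$ (horizontally contravariant, vertically covariant) transports the conjoint identities to the companion identities for $(\Sh(f), C_G)$ in the quintet double category, where a companion pair is exactly a pair of mutually inverse geometric transformations, i.e.\ $\Sh(f)\simeq C_G$. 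This is exactly the mechanism the paper itself uses for comma squares (``already invertible in the presheaf topos, hence invertible in the sheaf topos''), so your argument is in the spirit of the paper and has the merit of being self-contained: it re-proves the Elephant scholium rather than citing it. Two small remarks: the identification of the adjunction isomorphism with the Beck--Chevalley cell $\overline{\epsilon}^\flat$, which you defer as a ``finicky chase,'' is precisely the classical exactness of adjunction squares and can simply be cited; and strictly speaking the companion \emph{structure} already follows formally from double functoriality applied to the two conjoint cells and their pasting identities, so the invertibility of $\widetilde{\epsilon},\widetilde{\eta}$ is automatic in the quintet codomain rather than something you need exactness for --- your exactness argument is what makes the statement informative (it produces the isomorphism concretely), not what makes it true.
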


But this is not any more an equivalence, as $f$ and $G$ need not be already adjoint in $\Cat$ to induce the same geometric morphism; this leads to the following notion:

\begin{definition}
    A morphism of site $ f : (\mathcal{C}, J) \rightarrow (\mathcal{D},K)$ will be said to be \emph{weakly $K$-right adjoint} to a comorphism $G$ if there exists a $K$-exact square as below:
\[\begin{tikzcd}
	{(\mathcal{C},J)} & {(\mathcal{D},K)} \\
	{(\mathcal{C},J)} & {(\mathcal{C},J)}
	\arrow["f", from=1-1, to=1-2]
	\arrow[""{name=0, anchor=center, inner sep=0}, equals, from=1-1, to=2-1]
	\arrow[""{name=1, anchor=center, inner sep=0}, "G", from=1-2, to=2-2]
	\arrow[equals, from=2-1, to=2-2]
	\arrow["\epsilon"', shorten <=7pt, shorten >=7pt, Rightarrow, from=1, to=0]
\end{tikzcd}\]
\end{definition}

\begin{proposition}
    Let $f :(\mathcal{C},J) \rightarrow (\mathcal{D},K) $ be a morphism of sites and $G : (\mathcal{D},K) \rightarrow (\mathcal{C},J)$ a comorphism of sites; if $f$ is weakly $K$-right adjoint to $G$ then $ f$ and $G$ induce the same geometric morphism $\Sh(f) \simeq C_G : \Sh(\mathcal{D},K) \rightarrow \Sh(\mathcal{C},J)$ 
\end{proposition}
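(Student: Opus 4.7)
The strategy is to apply the sheafification double functor directly to the hypothesized locally exact square. By assumption, the weak $K$-right adjointness of $f$ to $G$ provides precisely a locally exact double cell of $\Site^\natural_\lax$, with horizontal 1-cells $f$ (top) and $1_{\mathcal{C}}$ (bottom) and vertical 1-cells $\id_{\mathcal{C}}$ (left) and $G$ (right), filled by $\epsilon$. Note that the identity functor on $(\mathcal{C},J)$ is simultaneously a (trivially flat, cover-preserving) morphism and a (trivially cover-lifting) comorphism of sites, so this cell is well-formed in $\Site^\natural_\lax$.

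Next, I would invoke the corollary characterizing locally exact squares as precisely those double cells of $\Site^\natural_\lax$ inverted by the sheafification double functor. Applying $\Sh$ to our square therefore yields an invertible double cell in $\GTop^\square_\lax$. The key observation is that both pseudofunctors $\Sh$ (on morphisms) and $C$ (on comorphisms) send the identity functor on $(\mathcal{C},J)$ to the identity geometric morphism of $\widehat{\mathcal{C}}_J$, up to the canonical pseudofunctoriality constraints. Consequently, the two equality sides of the source square are sent to identity 1-cells in $\GTop^\square_\lax$, and the resulting invertible double cell collapses to a plain invertible 2-cell $\widetilde{\epsilon} : \Sh(f) \Rightarrow C_G$ between geometric morphisms $\widehat{\mathcal{D}}_K \to \widehat{\mathcal{C}}_J$, which is exactly the asserted equivalence $\Sh(f) \simeq C_G$.

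I do not anticipate any substantial obstacle: the proof reduces to invoking the already-established characterization of local exactness via inversion by $\Sh$, combined with pseudofunctoriality on identities. The only verification worth spelling out is that the image double cell, built from the cross 2-cell $\overline{\epsilon}^\flat : \lext_f \rest_{\id_\mathcal{C}} \Rightarrow \rest_{\id_\mathcal{C}}\lext_G$ (equivalently $\lext_f \Rightarrow \rest_G$) and then sheafified, does specialize to an isomorphism between the inverse image parts $\Sh(f)^* \simeq C_G^*$; this is automatic once one unfolds the construction of $\widetilde{\phi}^\flat$ recalled in the previous subsection and uses that $\id_{\mathcal{C}}$ is sent to the identity on $\widehat{\mathcal{C}}_J$ by both $\mathfrak{a}_J \lext_{\id_\mathcal{C}} \mathfrak{i}_J$ and $\mathfrak{a}_J \rest_{\id_\mathcal{C}} \mathfrak{i}_J$. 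The conclusion $\Sh(f) \simeq C_G$ then follows from the fact that an isomorphism of inverse image parts in $\GTop$ induces an isomorphism of geometric morphisms.
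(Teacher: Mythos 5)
Your proof is correct and follows exactly the route the paper intends (the paper states this proposition without an explicit proof, as an immediate consequence of the corollary that the double cells of $\Site^\natural_\lax$ inverted by $\Sh$ are precisely the locally exact squares): the $K$-exact square witnessing weak $K$-right adjointness is a well-formed double cell with identity left vertical and bottom horizontal sides, so applying the sheafification double functor and using that identities are sent to identities yields an invertible $2$-cell $\Sh(f)^*\Rightarrow C_G^*$, hence $\Sh(f)\simeq C_G$. One small notational slip: the cross cell is $\overline{\epsilon}^\flat\colon \lext_f\rest_{\id_{\mathcal{C}}}\Rightarrow \rest_G\lext_{\id_{\mathcal{C}}}$ (your parenthetical $\lext_f\Rightarrow\rest_G$ is the correct reading), not $\rest_{\id_{\mathcal{C}}}\lext_G$.
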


\begin{division}

In the following we examine exactness of another pair of dual squares. For a functor $ f : \mathcal{C} \rightarrow \mathcal{D}$, one has two possible identity squares:
\[\begin{tikzcd}
	{\mathcal{C}} & {\mathcal{C}} \\
	{\mathcal{C}} & {\mathcal{D}}
	\arrow[equals, from=1-1, to=1-2]
	\arrow[equals, from=1-1, to=2-1]
	\arrow["f", from=1-2, to=2-2]
	\arrow["f"', from=2-1, to=2-2]
\end{tikzcd} \hskip1cm  \begin{tikzcd}
	{\mathcal{C}} & {\mathcal{D}} \\
	{\mathcal{D}} & {\mathcal{D}}
	\arrow["f", from=1-1, to=1-2]
	\arrow["f"', from=1-1, to=2-1]
	\arrow[equals, from=1-2, to=2-2]
	\arrow[equals, from=2-1, to=2-2]
\end{tikzcd} \] 
and their corresponding mates are respectively the unit and counit of the $\lext_f \dashv \rest_f$ adjunction
\[\begin{tikzcd}
	{\widehat{C}} & {\widehat{C}} \\
	{\widehat{C}} & {\widehat{D}}
	\arrow[equals, from=1-1, to=1-2]
	\arrow[""{name=0, anchor=center, inner sep=0}, equals, from=2-1, to=1-1]
	\arrow["{\lext_f}"', from=2-1, to=2-2]
	\arrow[""{name=1, anchor=center, inner sep=0}, "{\rest_f}"', from=2-2, to=1-2]
	\arrow["\eta", shorten <=6pt, shorten >=6pt, Rightarrow, from=0, to=1]
\end{tikzcd} \hskip1cm 
\begin{tikzcd}
	{\widehat{C}} & {\widehat{D}} \\
	{\widehat{D}} & {\widehat{D}}
	\arrow["{\lext_f}", from=1-1, to=1-2]
	\arrow[""{name=0, anchor=center, inner sep=0}, "{\rest_f}", from=2-1, to=1-1]
	\arrow[equals, from=2-1, to=2-2]
	\arrow[""{name=1, anchor=center, inner sep=0}, equals, from=2-2, to=1-2]
	\arrow["\epsilon", shorten <=6pt, shorten >=6pt, Rightarrow, from=0, to=1]
\end{tikzcd}\]

If now we consider two sites $ (\mathcal{C},J)$ and $ (\mathcal{D},K)$ such that $ f$ is both a morphism and a comorphism of sites, then it induces two geometric morphisms $ \Sh(f) $ and $C_f $, with $ C_f$ left adjoint to $\Sh(f)$ in the 2-category $ \Top$. This allows to recover from simple exactness conditions two dual classes of geometric morphisms.
\end{division}

\begin{division}[Fully faithful functors]
   It is known that a functor $ f : \mathcal{C} \rightarrow \mathcal{D}$ is fully faithful if and only if its identity square below is exact, which simply expresses that the universal 2-cell of the left Kan extension is invertible
\[\begin{tikzcd}
	{\mathcal{C}} & {\mathcal{C}} \\
	{\mathcal{C}} & {\mathcal{D}}
	\arrow[equals, from=1-1, to=1-2]
	\arrow[equals, from=1-1, to=2-1]
	\arrow["f", from=1-2, to=2-2]
	\arrow["f"', from=2-1, to=2-2]
\end{tikzcd} \hskip1cm  \rest_f \lext_f = \id \]
If $f$ is a flat functor and we equip $\mathcal{C}, \mathcal{D}$ with the trivial topologies, then $f$ is also a comorphism since covers are trivial, which exhibits the identity cell above as a companion double cell in $\Site^\natural_\lax$, and the sheafification double functor send it to a conjoint cell in $\GTop^\square_\lax$
\[\begin{tikzcd}
	{\widehat{\mathcal{C}}} & {\widehat{\mathcal{C}}} \\
	{\widehat{\mathcal{C}}} & {\widehat{\mathcal{D}}}
	\arrow[equals, from=1-1, to=1-2]
	\arrow[""{name=0, anchor=center, inner sep=0}, equals, from=1-1, to=2-1]
	\arrow[""{name=1, anchor=center, inner sep=0}, "{C_f}", from=1-2, to=2-2]
	\arrow["{\Sh(f)}", from=2-2, to=2-1]
	\arrow["\simeq"{description}, draw=none, from=1, to=0]
\end{tikzcd}\]
Hence the geometric morphism $C_f$ is exhibited as an internally fully faithful left adjoint to $\Sh(f)$, and as $ \lext_f = \Sh(f)^*$ itself is fully faithful, this makes $\Sh(f)$ a local geometric morphism with $C_f$ its universally initial section. Then we recover the following result, which relates to \cite{caramello2020denseness}[Theorem 7.20] stating that a continuous comorphism induces the left adjoint of a local geometric morphism: 
\end{division}

\begin{proposition}
    Suppose that $f : (\mathcal{C}, J) \rightarrow (\mathcal{D},K)$ is both a morphism and a comorphism of sites. Then $\Sh(f)$ is a local geometric morphism if and only if the identity square of $f$ below is $J$-locally exact:
\[\begin{tikzcd}
	{(\mathcal{C},J)} & {(\mathcal{C},J)} \\
	{(\mathcal{C},J)} & {(\mathcal{D},K)}
	\arrow[equals, from=1-2, to=1-1]
	\arrow["f", from=1-2, to=2-2]
	\arrow[equals, from=2-1, to=1-1]
	\arrow["f"', from=2-1, to=2-2]
\end{tikzcd}\]
\end{proposition}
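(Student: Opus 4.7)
The plan is to chain together three equivalences: $J$-local exactness of the square $\iff$ $\Sh(f)^*$ is fully faithful $\iff$ $C_{f*}$ is fully faithful $\iff$ $\Sh(f)$ is a local geometric morphism. The first will follow from an explicit computation of the Beck-Chevalley cell attached to the identity square; the middle equivalence will come from a mate-correspondence argument based on the adjunction $C_f \dashv \Sh(f)$ in $\Top$ recalled just above; the last is a repackaging of the definition of locality.

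First, I would apply the formula for $\overline{\phi}^\flat$ from \cref{Adjoint squares} to the identity square, where $f = g = \id$, $h = k = f$, and $\phi$ is the identity $2$-cell on $f$. The counit factor $\epsilon_f$ collapses (being instantiated at the top morphism $\id$), and the middle factor $\rest_\phi$ is itself an identity, so only the unit $\eta_f : \id \Rightarrow \rest_f \lext_f$ of the presheaf adjunction $\lext_f \dashv \rest_f$ survives. Applying the sheafification double functor then produces $\widetilde{\phi}^\flat : \id \Rightarrow C_f^* \Sh(f)^*$, which by compatibility of the sheaf adjunction with the presheaf one coincides with the unit of $\Sh(f)^* \dashv C_f^*$.

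Second, since $f$ is both a morphism and a comorphism, one has $C_f \dashv \Sh(f)$ in $\Top$, which unfolds simultaneously at the inverse- and direct-image levels into $\Sh(f)^* \dashv C_f^*$ and $\Sh(f)_* \dashv C_{f*}$. The latter identifies $C_{f*}$ as the further right adjoint $\Sh(f)^!$, so by definition $\Sh(f)$ is local iff $C_{f*}$ is fully faithful. By the $2$-categorical mate correspondence, the unit of $\Sh(f)^* \dashv C_f^*$ and the counit of $\Sh(f)_* \dashv C_{f*}$ are mates of the same $2$-cell in $\Top$ (the unit of $C_f \dashv \Sh(f)$), hence one is invertible iff the other is; the unit being iso characterizes $\Sh(f)^*$ as fully faithful, while the counit being iso characterizes $C_{f*}$ as fully faithful.

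Combining the two steps closes the argument: $J$-local exactness of the identity square is the statement that $\widetilde{\phi}^\flat$ is an iso, i.e.\ that $\Sh(f)^*$ is fully faithful, i.e.\ that $C_{f*}$ is fully faithful, i.e.\ that $\Sh(f)$ is local. The one potentially delicate point is the identification of $\widetilde{\phi}^\flat$ with the unit of $\Sh(f)^* \dashv C_f^*$: this rests on the explicit formulas $\Sh(f)^* = \mathfrak{a}_K \lext_f \mathfrak{i}_J$ and $C_f^* = \mathfrak{a}_J \rest_f \mathfrak{i}_K$ --- with $C_f^*$ in fact collapsing to $\Sh(f)_*$ by uniqueness of right adjoints --- together with the observation that the sheafification adjunctions transport the unit of $\lext_f \dashv \rest_f$ at sheaves precisely to the unit of $\Sh(f)^* \dashv C_f^*$. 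Once this coherence is granted, the remainder is a formal manipulation of adjoint units and counits.
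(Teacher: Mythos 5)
Your proposal is correct and follows essentially the same route as the paper: compute that the sheafified Beck--Chevalley cell of the identity square is the unit $\id \Rightarrow C_f^*\,\Sh(f)^*$ of $\Sh(f)^* \dashv C_f^* \cong \Sh(f)_*$, and combine its invertibility with the adjunction $C_f \dashv \Sh(f)$, which supplies the extra right adjoint $C_{f*} = \Sh(f)^!$, to conclude locality. The only caveat is cosmetic: ``mates of the same 2-cell, hence one invertible iff the other'' is not a valid general principle, and the equivalence you want is the standard adjoint-triple lemma that in $\Sh(f)^* \dashv \Sh(f)_* \dashv C_{f*}$ the leftmost adjoint is fully faithful iff the rightmost one is.
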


\begin{proof}
    From what precedes, $\Sh(f)$ will be right adjoint to $ C_f$; moreover, $J$-exactness of the square above says that $C_f$ is a section of $ \Sh(f)$, with $\Sh(f)^*$ fully faithful. Hence $\Sh(f)$ is a local geometric morphism with section $ C_f$. 
\end{proof}

\begin{remark}
    The condition (iii) of \cite{caramello2020denseness}[7.20] establishes that $C_f$ is local if and only if $f$ is a $J$-fully faithful continuous comorphism of site: following \cite{caramello2020denseness}[definition 5.14] this means that \begin{itemize}
        \item for any pair of maps $u,v : c \rightrightarrows c'$ in $\mathcal{C}$, if $f(u)=f(v)$, then there is a cover $ (s_i : c_i \rightarrow c)_{i \in I}$ in $J(c)$ such that for each $i\in I$ one has $ us_i= vs_i$;
        \item for any arrow $u : f(c) \rightarrow f(c')$ there is a cover $(s_i : c_i \rightarrow c)_{i \in I}$ in $J(c)$ together with a family of maps $(v_i : c_i \rightarrow c)_{i \in I}$ such that for each $i \in I$ one has $ u f(s_i) = f(vi)$.
    \end{itemize}
    On the other hand asking for $J$-exactness of the square above amounts to asking for the following functor to be $J$-cofinal at each $c$ in $\mathcal{C}$:
\[\begin{tikzcd}
	{\mathcal{C}/c} && {f\downarrow f(c)} \\
	& {\mathcal{C}}
	\arrow["{f/c}", from=1-1, to=1-3]
	\arrow["{\pi_0}"', from=1-1, to=2-2]
	\arrow["{\pi_0^f}", from=1-3, to=2-2]
\end{tikzcd}\]
In this case, applying \cref{Olivia characterization of relative cofinality}, we have that\begin{itemize}
    \item for any $s : c'' \rightarrow c'$ together with $u: f(c') \rightarrow f(c)$, there is a $J$-cover $ (s_i : c''_i \rightarrow c'')_{i \in I}$ and a family $(t_i : c_i'' \rightarrow c'_i)_{i\in I}$ with $ (u_i : c'_i \rightarrow c)_{i \in I}$ such that the following diagram is in the same connected component of $ c''_i \downarrow \pi_0^f$
\[\begin{tikzcd}[sep=small]
	& {c_i''} \\
	{c_i'} && {c''} \\
	&& c
	\arrow["{t_i}"', from=1-2, to=2-1]
	\arrow["{s_i}", from=1-2, to=2-3]
	\arrow[from=2-3, to=3-3]
\end{tikzcd}\]
    \item if one has a span 
\[\begin{tikzcd}[sep=small]
	& {c''} \\
	{c_0'} && {c_1'}
	\arrow["{s_0}"', from=1-2, to=2-1]
	\arrow["{s_1}", from=1-2, to=2-3]
\end{tikzcd}\] equiped with tow arrows $ u_0 : c_0' \rightarrow c$ and $ u_1 : c_1' \rightarrow c$ such that the following image commutes
\[\begin{tikzcd}[sep=small]
	& {f(c'')} \\
	{f(c_0')} && {f(c_1')} \\
	& {f(c)}
	\arrow["{f(s_0)}"', from=1-2, to=2-1]
	\arrow["{f(s_1)}", from=1-2, to=2-3]
	\arrow["{f(u_0)}"', from=2-1, to=3-2]
	\arrow["{f(u_1)}", from=2-3, to=3-2]
\end{tikzcd}\]
then there exists a $J$-cover $ (u_i : c_i'' \rightarrow c'')_{i \in I}$ such that one already has a commutation in $\mathcal{C}$
\[\begin{tikzcd}[sep=small]
	& {c''_i} \\
	{c_0'} && {c_1'} \\
	& c
	\arrow["{s_0u_i}"', from=1-2, to=2-1]
	\arrow["{s_1u_i}", from=1-2, to=2-3]
	\arrow["{u_0}"', from=2-1, to=3-2]
	\arrow["{u_1}", from=2-3, to=3-2]
\end{tikzcd}\]
\end{itemize}
Then, supposing $J$-exactness, applying the first condition of the $J$-cofinality criterion at the pair $ u :f(c') \rightarrow f(c)$ together with the identity of $c'$ provide a $J$-cover $ (s_i :c''_i \rightarrow c')_{i \in I}$ together with a family $ (t_i,u_i)_{i \in I}$ such that the maps $ s_i$ and $u_it_i$ are in the same connected component of $c''_i \downarrow \pi_0^f$, which means that they are connected by some zigzag 
\[\begin{tikzcd}[sep=small]
	& {c_i''} \\
	{c_i'} & \cdots & c'
	\arrow["{t_i}"', from=1-2, to=2-1]
	\arrow[from=1-2, to=2-2]
	\arrow["{s_i}", from=1-2, to=2-3]
	\arrow["{w_i}", from=2-2, to=2-1]
	\arrow["{v_i}"', from=2-2, to=2-3]
\end{tikzcd}\]
made of arrows in the range of $\pi^f_0$, so that the following pasting in $\mathcal{D}$ commutes and ensure an equality of the composites
\[\begin{tikzcd}[sep=small]
	& {f(c_i'')} \\
	{f(c_i')} & \cdots & {f(c')} \\
	& {f(c')}
	\arrow["{f(t_i)}"', from=1-2, to=2-1]
	\arrow[from=1-2, to=2-2]
	\arrow["{f(s_i)}", from=1-2, to=2-3]
	\arrow["{f(u_i)}"', from=2-1, to=3-2]
	\arrow["{f(w_i)}", from=2-2, to=2-1]
	\arrow["{f(v_i)}"', from=2-2, to=2-3]
	\arrow[from=2-2, to=3-2]
	\arrow["u", from=2-3, to=3-2]
\end{tikzcd}\]
Similarly, applying the second condition to a pair of maps $ u_0,u_1 : c' \rightarrow c$ such that $f(u_0)=f(u_1)$ ensures the relative faithfulness condition. For the converse implication, suppose that $f$ is $J$-fully faithful; take $ s: c'' \rightarrow c'$ and $u: f(c') \rightarrow f(c)$; by $J$-fullness, there is a $J$-cover of $(s_i : c'_i \rightarrow c')_{i \in I}$ together with $ v_i : c'_i \rightarrow c$ with $uf(s_i)=f(v_i)$; so one has to take the pullback sieve induced from $(s_i)_{i \in I}$ along $s$ to provide the desired data. Similar argument for the faithfulness condition.
\end{remark}


\begin{division}[Absolutely dense functors]
    A functor $f : \mathcal{C} \rightarrow \mathcal{D}$ is \emph{co-fully faithful} if the square below is exact, which corresponds to the equation on the right
\[\begin{tikzcd}
	{\mathcal{C}} & {\mathcal{D}} \\
	{\mathcal{D}} & {\mathcal{D}}
	\arrow["f", from=1-1, to=1-2]
	\arrow["f"', from=1-1, to=2-1]
	\arrow[equals, from=1-2, to=2-2]
	\arrow[equals, from=2-1, to=2-2]
\end{tikzcd} \hskip1cm \lext_f\rest_f=1 \]
so that in particular $ \rest_f$ is fully-faithful. This means that $ f$ is generating in a very strong way, a reason for which such functor are also be said to be \emph{absolutely dense}, and it is known by \cite{adamek2001laxepi} that they are the lax epimorphisms in $\Cat$. \\

Then if $f$ is also flat, seeing it both as a morphism and comorphism of site for the trivial topologies, this means that the induced functor $C_f$ has an essential image part given by $\lext_f$, which is also lex whenever $f$ is flat, and moreover $ \rest_f$ is exhibited as fully faithful, which makes $ C_f$ a connected geometric morphism. Moreover the dual square of the identity of $f$, through not necessarily exact, provides for a unit cell exhibiting $ \Sh(f)$ as a right adjoint of $ C_f$ in $\Top$, so $C_f$ is a \emph{totally connected} geometric morphism, with $\Sh(f)$ its terminal section.
\[\begin{tikzcd}
	{\widehat{\mathcal{C}}} & {\widehat{\mathcal{D}}} \\
	{\widehat{\mathcal{D}}} & {\widehat{\mathcal{D}}}
	\arrow["{C_f}"', from=1-1, to=2-1]
	\arrow["{\Sh(f)}"', from=1-2, to=1-1]
	\arrow[equals, from=1-2, to=2-2]
	\arrow[equals, from=2-1, to=2-2]
\end{tikzcd}\]
\end{division}

\begin{proposition}
    Suppose that $f : (\mathcal{C}, J) \rightarrow (\mathcal{D},K)$ is both a morphism and a comorphism of sites. Then $C_f$ is a totally connected geometric morphism if and only if the identity square of $f$ below is $K$-locally exact: 
\[\begin{tikzcd}
	{(\mathcal{C},J)} & {(\mathcal{D},K)} \\
	{(\mathcal{D},K)} & {(\mathcal{D},K)}
	\arrow["f", from=1-1, to=1-2]
	\arrow["f"', from=1-1, to=2-1]
	\arrow[equals, from=1-2, to=2-2]
	\arrow[equals, from=2-1, to=2-2]
\end{tikzcd}\]
\end{proposition}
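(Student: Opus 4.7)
The plan is to proceed dually to the proof of the preceding proposition on local geometric morphisms. The first step is to observe that, since $f$ is simultaneously a morphism and a comorphism of sites, one obtains an induced adjunction $C_f \dashv \Sh(f)$ in $\Top$, which at the level of inverse images yields $\Sh(f)^* \dashv C_f^*$. Because $\Sh(f)^*$ is an inverse image, it is automatically left exact; hence $C_f^*$ already admits a lex left adjoint, and totally connectedness of $C_f$ reduces to the sole condition that $C_f^*$ be fully faithful. The task therefore becomes showing that $K$-local exactness of the given identity square is equivalent to the full faithfulness of $C_f^*$.

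The second step is to identify the $2$-cell $\widetilde{\phi}^\flat$ associated with this square with the counit of the adjunction $\Sh(f)^* \dashv C_f^*$. Because $\phi$ is the trivial $2$-cell, the composite described in \cref{Adjoint squares} collapses: the presheaf-level comparison $\overline{\phi}^\flat$ is nothing but the counit $\varepsilon_f : \lext_f \rest_f \Rightarrow \id$ of the presheaf adjunction $\lext_f \dashv \rest_f$. Sheafifying this $2$-cell, using the identifications $\Sh(f)^* \simeq \mathfrak{a}_K \lext_f \mathfrak{i}_J$ and $C_f^* \simeq \mathfrak{a}_J \rest_f \mathfrak{i}_K$, yields exactly the counit of the induced sheaf-level adjunction $\Sh(f)^* \dashv C_f^*$.

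Putting the two steps together, $K$-local exactness is equivalent to the invertibility of this counit, equivalently to the full faithfulness of $C_f^*$, which by the first step is the remaining ingredient for $C_f$ to be totally connected; the invertible counit then exhibits $\Sh(f)$ as the terminal section of $C_f$. The main obstacle we anticipate is the clean execution of the second step, namely the verification that sheafification intertwines the presheaf counit $\varepsilon_f$ with the counit of the derived sheaf adjunction; this however follows from the standard naturality of $\mathfrak{a}_K$ and $\mathfrak{i}_K$ together with the commuting squares relating $\Sh(f)^*, C_f^*$ to $\lext_f, \rest_f$ already exhibited in the preceding subsections, so no genuine difficulty arises and the equivalence is then formal.
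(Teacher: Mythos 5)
Your proof is correct and follows essentially the same route as the paper: both arguments rest on the adjunction $C_f \dashv \Sh(f)$ in $\Top$ (equivalently $\Sh(f)^* \dashv C_f^*$) available because $f$ is simultaneously a morphism and a comorphism, and both identify $K$-local exactness of the identity square with invertibility of the sheafified counit, i.e.\ with $C_f^*$ being fully faithful (connectedness) and $\Sh(f)$ being exhibited as the terminal section, whence total connectedness. Your explicit identification of $\overline{\phi}^\flat$ with the counit $\epsilon_f$ of $\lext_f \dashv \rest_f$ is precisely the observation the paper already records in the division preceding the proposition, so no new ingredient is involved.
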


\begin{proof}
    This is because again we have $ C_f$ left adjoint to $\Sh(f)$ in $\Top$, and now the dual condition of exactness says that $\Sh(f)$ is section of $C_f$, and $ C_f$ is connected for $\rest_f$ is fully faithful. 
\end{proof}

\begin{remark}

Suppose that, for a functor $ f$  the following square is $K$-exact: then the following functor is relatively $K$-cofinal
\[\begin{tikzcd}
	{f\downarrow d} && {\mathcal{D}\downarrow d} \\
	& {\mathcal{D}}
	\arrow["{\iota_d}", from=1-1, to=1-3]
	\arrow["{f\pi_0^f}"', from=1-1, to=2-2]
	\arrow["{\pi_0}", from=1-3, to=2-2]
\end{tikzcd}\]

Unfolding this conditions amount to saying that for any arrow $ v : d' \rightarrow d$, there is a $K$-cover $ (s_i : d'_i \rightarrow d)$ for which the the category of factorization $ vs_i//f$ at each $i$ is non empty and connected: this means in some sense that this is \emph{locally} a lax epimorphism. 

\end{remark}


\subsection{Tabulators and other double-categorical constructions}

The comma construction can be reformulated in a double categorical manner, providing a special shape of double-limit.

\begin{definition}
    Let $ f : A \rightarrow B$ be a horizontal cell in a double category $\mathbb{D}$. A \emph{tabulator} for $f$ is a double cell of the following form
\[\begin{tikzcd}
	& {T(f)} \\
	C && D
	\arrow[""{name=0, anchor=center, inner sep=0}, "{\pi_0}"', "\shortmid"{marking}, from=1-2, to=2-1]
	\arrow[""{name=1, anchor=center, inner sep=0}, "{\pi_1}", "\shortmid"{marking}, from=1-2, to=2-3]
	\arrow["f"', from=2-1, to=2-3]
	\arrow["{\lambda_f}"{description}, draw=none, from=1, to=0]
\end{tikzcd}\]
satisfying the (switched) universal property described at \cite{pare2011yoneda}[Definition 3.21]. 
\end{definition}

\begin{division}
    It is known that in a 2-category a comma object with the identity provides a tabulator in the associated double category of quintets. We have the same phenomenon here, though this requires some verification for the double category in question is not simply a double category of quintets but has non-interchangeable vertical and horizontal cells.

The comma 2-cell $ \lambda_f: \pi_\mathcal{D}^f \Rightarrow f \pi_\mathcal{C}^f$ above defines a double cell
\[\begin{tikzcd}
	& {(\mathcal{D}\downarrow f,K_f)} \\
	{(\mathcal{C},J)} && {(\mathcal{D},K)}
	\arrow["{\pi_\mathcal{C}^f}"', "\shortmid"{marking}, from=1-2, to=2-1]
	\arrow["{\pi_\mathcal{D}^f}", "\shortmid"{marking}, from=1-2, to=2-3]
	\arrow[""{name=0, anchor=center, inner sep=0}, "f"', from=2-1, to=2-3]
	\arrow["{\lambda_f}"{description}, draw=none, from=1-2, to=0]
\end{tikzcd}\]

\end{division}

\begin{proposition}
    The double cell $ \lambda_f$ is a vertical tabulator for the horizontal morphism $f$.
\end{proposition}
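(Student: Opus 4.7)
The plan is to reduce to the fact that the comma $\mathcal{D}\downarrow f$ with $\lambda_f$ is already a tabulator for $f$ in the double category of quintets $\Cat^\square_\lax$, and then to upgrade this universal property to $\Site^\natural_\lax$ by checking compatibility with the topologies. A vertical cone over $f$ in $\Site^\natural_\lax$ consists of a site $(\mathcal{X},N)$, two comorphisms of sites $G : (\mathcal{X},N) \to (\mathcal{C},J)$ and $H : (\mathcal{X},N) \to (\mathcal{D},K)$, and a natural transformation $\phi : H \Rightarrow fG$. The universal property of the comma in $\Cat$ produces a unique functor
\[ \langle G,H,\phi\rangle : \mathcal{X} \to \mathcal{D}\downarrow f \]
sending $x$ to the object $\phi_x : H(x) \to fG(x)$ and a morphism $a : x \to x'$ to the comma arrow $(H(a), G(a))$; by construction it satisfies $\pi^f_\mathcal{C}\langle G,H,\phi\rangle = G$, $\pi^f_\mathcal{D}\langle G,H,\phi\rangle = H$, and $\lambda_f \ast \langle G,H,\phi\rangle = \phi$.

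The central task is to verify that $\langle G, H, \phi\rangle$ is a comorphism of sites into $(\mathcal{D}\downarrow f, K_f)$. For this I would check cover-lifting on the generating covers of the Giraud topology $K_f$. Using the description of $K_f$ from \cite{caramello2020denseness}[Theorem 3.16] — or equivalently specializing the construction of $J_{G,f}$ from the previous lemma to the case where the \emph{comorphism} leg is $\id_\mathcal{D}$ — every generating cover of $K_f$ at $\phi_x : H(x) \to fG(x)$ is built from a $K$-cover of $H(x)$ together with a $J$-cover of $G(x)$, glued along $\phi_x$. Pulling such a generating cover back along $\langle G, H, \phi\rangle$ yields a sieve which contains the pullback along $H$ of the corresponding $K$-cover and the pullback along $G$ of the corresponding $J$-cover on $x$; both are $N$-covering because $G$ and $H$ are themselves comorphisms of sites, and standard site-theoretic closure under intersection and transitivity produces the desired $N$-cover.

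Uniqueness of the factorizing comorphism is automatic once uniqueness of the underlying functor is granted, since being a comorphism of sites is a property, not extra structure. The two-dimensional clause of Paré's universal property is verified identically: any morphism of vertical cones yields, via the $\Cat$-level universal property of the comma, a unique natural transformation between the induced functors, and this is automatically a $2$-cell in $\Site^\natural_\lax$ because $2$-cells between comorphisms of sites are just ordinary natural transformations. The main obstacle is thus concentrated in the cover-lifting calculation above, which hinges on a careful unpacking of the generators of $K_f$; the remainder is a direct transport of the $2$-categorical universal property of $\mathcal{D}\downarrow f$ from $\Cat^\square_\lax$ through the lax quintet structure.
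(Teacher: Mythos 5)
Your overall architecture is the same as the paper's: induce $\langle G,H,\phi\rangle$ from the $\Cat$-level universal property of the comma, prove that it is a comorphism of sites into $(\mathcal{D}\downarrow f,K_f)$, and then treat the two-dimensional clause. The gap sits exactly where you locate the ``main obstacle'', and your sketch of that step does not work. First, the identification of the generating covers of $K_f$ is unjustified and in general false: specializing the lemma on $J_{G,f}$ to the identity comorphism leg only produces the ``mixed'' covers indexed by $J$-covers $S$ of the $\mathcal{C}$-component, whose $\mathcal{D}$-part is the induced cover $\phi_x^*\lext_f(S)$; this is not the Giraud topology. For instance, if $J$ is trivial and $K$ is not, that specialization is the trivial topology on $\mathcal{D}\downarrow f$, whereas $K_f$ must contain the lifts along $\pi^f_\mathcal{D}$ of arbitrary $K$-covers of $H(x)$ (covers imposing no constraint on the $\mathcal{C}$-component) --- this is precisely what makes $\pi^f_\mathcal{D}$ a comorphism and dense, which the paper exploits immediately afterwards. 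For those lifted covers the lifting has to go through $H$: the preimage under $\langle G,H,\phi\rangle$ contains $H^{-1}(T)$ for the given $K$-cover $T$, which is $N$-covering because $H$ is a comorphism; this is the paper's argument, and your list of generators misses these covers entirely. Second, even for the mixed covers your containment is backwards: an arrow $a\colon x'\to x$ lies in the pulled-back sieve only if the pair $(H(a),G(a))$ factors through a member of the cover, so the preimage certainly does not contain $H^{-1}$ of the $K$-part. What it does contain --- using the naturality square $\phi_x H(a)=fG(a)\phi_{x'}$, which you never invoke --- is $G^{-1}(S)$, and that alone is $N$-covering since $G$ is a comorphism; no appeal to intersections or transitivity is needed, nor would it repair the statement as written.

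A smaller point: the two-dimensional (tetrahedron) clause of Paré's universal property is only waved at. The paper verifies it explicitly, and there is a genuine subtlety there: the comparison functor induced by the whiskering $\psi * h$ need not itself be a comorphism of sites, yet the required comparison double cell $\langle\gamma,\kappa\rangle$ still exists and gives the factorization. In summary, the strategy is right, but the central cover-lifting computation must be redone with the actual covers of the Giraud topology (lifts of $K$-covers along $\pi^f_\mathcal{D}$), where it is the comorphism property of $H$, not a gluing of a $K$-cover with a $J$-cover, that carries the argument.
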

\begin{proof}

If one has a 2-cell 
\[\begin{tikzcd}
	& {(\mathcal{B},L)} \\
	{(\mathcal{C},J)} && {(\mathcal{D},K)}
	\arrow["G"', "\shortmid"{marking}, from=1-2, to=2-1]
	\arrow["H", "\shortmid"{marking}, from=1-2, to=2-3]
	\arrow[""{name=0, anchor=center, inner sep=0}, "f"', from=2-1, to=2-3]
	\arrow["\phi"{description}, draw=none, from=1-2, to=0]
\end{tikzcd}\]
with this time $ H$ a comorphism of site, then the induced arrow $ \langle \phi \rangle$ is now a comorphism of site too: indeed for a family $ (u_i : b_i \rightarrow b)_{i \in I}$ in $\mathcal{B}$, if $\langle \phi \rangle_{i \in I}$ is $K_f$-covering in $ \mathcal{D}\downarrow f$, it means that the left component $ (Hu_i : Hb_i \rightarrow Hb)_{i \in I}$ are $K$-covering in $\mathcal{D}$: but for $H$ is cover reflecting, $(u_i : b_i \rightarrow b)_{i \in I}$ is $L$-covering in $\mathcal{B}$. Hence $ \langle \phi \rangle$ defines a vertical morphism and we have a decomposition of the double-cell $\phi$ in $\Site_\lax^\natural$ as the pasting of double cells
\[\begin{tikzcd}
	{(\mathcal{B},L)} & {(\mathcal{B},L)} & {(\mathcal{B},L)} & {(\mathcal{B},L)} \\
	& {(\mathcal{D}\downarrow f, K_f)} & {(\mathcal{D}\downarrow f, K_f)} \\
	{(\mathcal{C},J)} & {(\mathcal{C},J)} & {(\mathcal{D},K)} & {(\mathcal{D},K)}
	\arrow[""{name=0, anchor=center, inner sep=0}, Rightarrow, no head, from=1-1, to=1-2]
	\arrow["G"', "\shortmid"{marking}, from=1-1, to=3-1]
	\arrow["{\langle \phi \rangle}"', "\shortmid"{marking}, from=1-2, to=2-2]
	\arrow[""{name=1, anchor=center, inner sep=0}, Rightarrow, no head, from=1-3, to=1-2]
	\arrow[""{name=2, anchor=center, inner sep=0}, Rightarrow, no head, from=1-3, to=1-4]
	\arrow["{\langle \phi \rangle}", "\shortmid"{marking}, from=1-3, to=2-3]
	\arrow["H", "\shortmid"{marking}, from=1-4, to=3-4]
	\arrow[""{name=3, anchor=center, inner sep=0}, Rightarrow, no head, from=2-2, to=2-3]
	\arrow["{\pi_\mathcal{C}^f}"', "\shortmid"{marking}, from=2-2, to=3-2]
	\arrow["{\pi_\mathcal{D}^f}", "\shortmid"{marking}, from=2-3, to=3-3]
	\arrow[""{name=4, anchor=center, inner sep=0}, Rightarrow, no head, from=3-1, to=3-2]
	\arrow[""{name=5, anchor=center, inner sep=0}, "f"', from=3-2, to=3-3]
	\arrow[""{name=6, anchor=center, inner sep=0}, Rightarrow, no head, from=3-3, to=3-4]
	\arrow["{1_\phi}"{description}, draw=none, from=1, to=3]
	\arrow["\rho", draw=none, from=4, to=0]
	\arrow["{\lambda_f}"{description}, draw=none, from=5, to=3]
	\arrow["\lambda"', draw=none, from=6, to=2]
\end{tikzcd}\]
We know must check the tetrahedron condition. Suppose one has an equality of double cells as below
\[\begin{tikzcd}
	{(\mathcal{A},M)} & {(\mathcal{B},L)} & {(\mathcal{B},L)} \\
	{(\mathcal{C},J)} & {(\mathcal{C},J)} & {(\mathcal{D},K)}
	\arrow["h", from=1-1, to=1-2]
	\arrow["G"', "\shortmid"{marking}, from=1-1, to=2-1]
	\arrow["{G'}"', "\shortmid"{marking}, from=1-2, to=2-2]
	\arrow[Rightarrow, no head, from=1-3, to=1-2]
	\arrow["{K'}", "\shortmid"{marking}, from=1-3, to=2-3]
	\arrow[Rightarrow, no head, from=2-1, to=2-2]
	\arrow["\gamma"{description}, draw=none, from=2-2, to=1-1]
	\arrow["f"', from=2-2, to=2-3]
	\arrow["{\psi}"{description}, draw=none, from=2-3, to=1-2]
\end{tikzcd} = 
\begin{tikzcd}
	{(\mathcal{A},M)} & {(\mathcal{A},M)} & {(\mathcal{B},L)} \\
	{(\mathcal{C},J)} & {(\mathcal{C},J)} & {(\mathcal{D},K)}
	\arrow[Rightarrow, no head, from=1-1, to=1-2]
	\arrow["G"', "\shortmid"{marking}, from=1-1, to=2-1]
	\arrow["h", from=1-2, to=1-3]
	\arrow["K"', "\shortmid"{marking}, from=1-2, to=2-2]
	\arrow["{K'}", "\shortmid"{marking}, from=1-3, to=2-3]
	\arrow["f"', from=2-1, to=2-2]
	\arrow["\phi"{description}, draw=none, from=2-2, to=1-1]
	\arrow["\kappa"{description}, draw=none, from=2-3, to=1-2]
	\arrow[Rightarrow, no head, from=2-3, to=2-2]
\end{tikzcd}\]
that is, an equality of 2-cell between the underlying categories $ f*\gamma \psi* h = \phi \gamma$. Then from what precedes $\phi$ and $ \psi$ induce respectively comorphisms of sites $ \langle \phi \rangle : (\mathcal{A}, M) \rightarrow (\mathcal{D}\downarrow f, K_f)$ and $ \langle \psi \rangle : (\mathcal{B}, L) \rightarrow (\mathcal{D}\downarrow f, K_f)$ factorizing universally $ \phi$ and $\psi$. Moreover the whiskering $ \psi *h$ defines another functor $ \mathcal{A} \rightarrow \mathcal{D}\downarrow f$ (though beware, it may not be a comorphism of site for the composite $ K'h$ may not), related to $ \langle \phi \rangle $ through a universal comparison 2-cell produced by $\gamma$ and $\kappa$
\[\begin{tikzcd}
	{\mathcal{A}} && {\mathcal{B}} \\
	& {\mathcal{D}\downarrow f}
	\arrow["h", from=1-1, to=1-3]
	\arrow[""{name=0, anchor=center, inner sep=0}, "{\langle\phi \rangle}"', from=1-1, to=2-2]
	\arrow[""{name=1, anchor=center, inner sep=0}, "{\langle\psi \rangle}", from=1-3, to=2-2]
	\arrow["{\langle \gamma,\kappa \rangle}"', shorten <=7pt, shorten >=7pt, Rightarrow, from=1, to=0]
\end{tikzcd}\]
Those data altogether ensure the existence of the desired universal tetrahedron factorization
\[\begin{tikzcd}
	{(\mathcal{A},M)} & {(\mathcal{B},L)} \\
	{(\mathcal{D}\downarrow f, K_f)} & {(\mathcal{D}\downarrow f, K_f)} \\
	{(\mathcal{C},J)} & {(\mathcal{D},K)}
	\arrow[""{name=0, anchor=center, inner sep=0}, "h", from=1-1, to=1-2]
	\arrow["{\langle\phi \rangle}"', "\shortmid"{marking}, from=1-1, to=2-1]
	\arrow["{\langle \psi \rangle}", "\shortmid"{marking}, from=1-2, to=2-2]
	\arrow[""{name=1, anchor=center, inner sep=0}, Rightarrow, no head, from=2-1, to=2-2]
	\arrow["{\pi_\mathcal{C}^f}"', "\shortmid"{marking}, from=2-1, to=3-1]
	\arrow["{\pi_\mathcal{D}^f}", "\shortmid"{marking}, from=2-2, to=3-2]
	\arrow[""{name=2, anchor=center, inner sep=0}, "f"', from=3-1, to=3-2]
	\arrow["{\langle \gamma, \kappa \rangle}"{description}, draw=none, from=0, to=1]
	\arrow["{\lambda_f}"{description}, draw=none, from=1, to=2]
\end{tikzcd}\]

\end{proof}

\section{$\Site$ as a double category of coalgebras}

One of the main instances of double categories outside the family of equipments is the double category of (strict) algebras, lax morphisms and colax morphisms of algebra for a 2-monad on a 2-category, as introduced in \cite{paregrandismultiple}[Section 5.4]. By formal duality, we also have for any 2-comonad a double category of coalgebras, lax and colax morphisms of coalgebras. In fact this even makes sense for an arbitrary copointed endo-2-functors for a 2-category, for which the notion of coalgebra is still meaningful. In this section, we describe a 2-comonad on $\Cat$ such that sites are coalgebras at least for the underlying copointed endofunctor, while cover-preserving functors are lax morphisms of coalgebras and cover-lifting functors colax morphisms of coalgebras. This gives an explanation of the dichotomy between morphisms and comorphisms. 

\subsection{Double category of coalgebras}

\begin{definition}
    Let $ \mathcal{K}$ be a 2-category and $ T : \mathcal{K} \rightarrow \mathcal{K}$ a pointed endo-2-functor, that is, equipped with a pseudonatural transformation $ \varepsilon: T \Rightarrow 1_\mathcal{K} $. A \emph{coalgebra} for this pointed endofunctor is the data of a pair $ (C,\gamma) $ with  $C$ in $\mathcal{K}$ and $ \alpha$ a section of the pointer 
\[\begin{tikzcd}
	C & TC \\
	& C
	\arrow["\gamma", from=1-1, to=1-2]
	\arrow[Rightarrow, no head, from=1-1, to=2-2]
	\arrow["{\varepsilon_C}", from=1-2, to=2-2]
\end{tikzcd}\]
A \emph{lax} (resp. \emph{colax}) \emph{morphism} of coalgebras $ (C, \gamma) \rightarrow (D,\delta)$ is a pair $(f, \phi) $ with $f : C \rightarrow D$ in $\mathcal{K}$ and $ \phi$ a 2-cell as on the left (resp. on the right)
\[
\begin{tikzcd}
	C & D \\
	TC & TD
	\arrow["f", from=1-1, to=1-2]
	\arrow[""{name=0, anchor=center, inner sep=0}, "\gamma"', from=1-1, to=2-1]
	\arrow[""{name=1, anchor=center, inner sep=0}, "\delta", from=1-2, to=2-2]
	\arrow["Tf"', from=2-1, to=2-2]
	\arrow["\phi"', shorten <=6pt, shorten >=6pt, Rightarrow, from=1, to=0]
\end{tikzcd} \hskip1cm \begin{tikzcd}
	C & D \\
	TC & TD
	\arrow["f", from=1-1, to=1-2]
	\arrow[""{name=0, anchor=center, inner sep=0}, "\gamma"', from=1-1, to=2-1]
	\arrow[""{name=1, anchor=center, inner sep=0}, "\delta", from=1-2, to=2-2]
	\arrow["Tf"', from=2-1, to=2-2]
	\arrow["\phi", shorten <=6pt, shorten >=6pt, Rightarrow, from=0, to=1]
\end{tikzcd}\]
such that moreover one has the following coherence condition (resp. its dual one)
\[\begin{tikzcd}
	C & D \\
	TC & TD \\
	C & D
	\arrow["f", from=1-1, to=1-2]
	\arrow[""{name=0, anchor=center, inner sep=0}, "\gamma"', from=1-1, to=2-1]
	\arrow[""{name=1, anchor=center, inner sep=0}, "\delta", from=1-2, to=2-2]
	\arrow["Tf"{description}, from=2-1, to=2-2]
	\arrow["{\varepsilon_C}"', from=2-1, to=3-1]
	\arrow["{\varepsilon_D}", from=2-2, to=3-2]
	\arrow["f"', from=3-1, to=3-2]
	\arrow["\phi"', shorten <=6pt, shorten >=6pt, Rightarrow, from=1, to=0]
\end{tikzcd} = 1_f \hskip1cm \begin{tikzcd}
	C & D \\
	TC & TD \\
	C & D
	\arrow["f", from=1-1, to=1-2]
	\arrow[""{name=0, anchor=center, inner sep=0}, "\gamma"', from=1-1, to=2-1]
	\arrow[""{name=1, anchor=center, inner sep=0}, "\delta", from=1-2, to=2-2]
	\arrow["Tf"{description}, from=2-1, to=2-2]
	\arrow["{\varepsilon_C}"', from=2-1, to=3-1]
	\arrow["{\varepsilon_D}", from=2-2, to=3-2]
	\arrow["f"', from=3-1, to=3-2]
	\arrow["\phi", shorten <=6pt, shorten >=6pt, Rightarrow, from=0, to=1]
\end{tikzcd}= 1_f 
\]
\end{definition}

\begin{proposition}
    For any copointed endo-2-functor $T$, one can form a double category $ T\hy\coAlg$ of strict coalgebras, lax morphisms as horizontal cells, colax morphisms as vertical cells, and as 2-cell, the lax squares of the form 
\[\begin{tikzcd}
	{(A,\alpha)} & {(B, \beta)} \\
	{(C,\gamma)} & {(D,\delta)}
	\arrow["{(f,\phi)}", from=1-1, to=1-2]
	\arrow[""{name=0, anchor=center, inner sep=0}, "{(h,\eta)}"', from=1-1, to=2-1]
	\arrow[""{name=1, anchor=center, inner sep=0}, "{(k, \chi)}", from=1-2, to=2-2]
	\arrow["{(g,\kappa)}"', from=2-1, to=2-2]
	\arrow["\psi", shorten <=7pt, shorten >=7pt, Rightarrow, from=0, to=1]
\end{tikzcd} \]
\end{proposition}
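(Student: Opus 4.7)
The plan is to adapt the classical Paré--Grandis construction of the double category of (strict) algebras for a 2-monad, together with lax and colax morphisms as horizontal and vertical 1-cells, to the present setting of a copointed endo-2-functor. The absence of a multiplication causes no trouble, since the coalgebra axiom and both (co)lax morphism coherences only involve $\varepsilon$.

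First I will pin down the compatibility axiom on a double cell $\psi$. With $\psi : kf \Rightarrow gh$ in the appropriate orientation, the condition asserts that two canonical pastings between $Tk \beta f$ and $Tg \gamma h$ coincide:
\[ (Tg \ast \eta) \cdot (T\psi \ast \alpha) \cdot (Tk \ast \phi) \;=\; (\kappa \ast h) \cdot (\delta \ast \psi) \cdot (\chi \ast f). \]
Both sides are composable precisely because the horizontal 1-cells are lax (so $\phi,\kappa$ orient one way, $\beta f \Rightarrow Tf \alpha$ and $\delta g \Rightarrow Tg \gamma$) while the vertical 1-cells are colax (so $\eta,\chi$ orient the other way, $Th\alpha \Rightarrow \gamma h$ and $Tk\beta \Rightarrow \delta k$). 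Equivalently, this is the condition that the cube whose top face is $\psi$, whose bottom face is $T\psi$, and whose four side faces are the structure 2-cells $\phi, \chi, \eta, \kappa$, commutes.

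Next, I will define the composition operations. Horizontal composition of two consecutive lax morphisms of coalgebras is obtained by pasting their defining lax squares and invoking 2-functoriality of $T$; the $\varepsilon$-coherence of the composite follows from the coherences of the factors together with pseudonaturality of $\varepsilon$. Vertical composition of colax morphisms is defined dually, and horizontal/vertical identities are carried by identity 1-cells with identity structure 2-cells. Horizontal and vertical compositions of double cells are defined by horizontal and vertical pasting of the underlying 2-cells in $\mathcal{K}$; the core verification is that the cube axiom is preserved under both operations, which in each case amounts to glueing two cubes along a common side (respectively top/bottom) face and again invoking the 2-functoriality of $T$.

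The interchange law, associativity and unit axioms all transfer immediately from the 2-categorical pasting calculus in $\mathcal{K}$, since every operation on cells is defined pointwise by pasting in $\mathcal{K}$. The main (and essentially only) obstacle is the bookkeeping of orientations in the cube axiom: one must check that the lax and colax structure 2-cells pair up so that the two sides of the compatibility equation remain well-typed after every composition, and that all pastings of cubes assemble coherently. Beyond this combinatorial check, no further categorical input is needed.
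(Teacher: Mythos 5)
Your proposal is correct and takes essentially the same approach as the paper, which itself offers no proof beyond invoking the Par\'e--Grandis construction of the double category of (strict) (co)algebras with lax and colax morphisms and recording, in the remark following the proposition, exactly the cube-type compatibility condition on double cells that you spell out; since neither the coalgebra axiom nor the (co)lax coherences involve a comultiplication, the adaptation to a copointed endo-2-functor is as routine as you claim, with all compositions given by pasting and 2-functoriality of $T$. The only caveat is orientation bookkeeping: with the paper's stated orientations of the lax and colax structure 2-cells, the internally consistent convention is the one you adopt ($\psi : kf \Rightarrow gh$), whereas the paper's remark writes $\psi : gh \Rightarrow kf$ with the structure cells implicitly reversed --- a discrepancy of convention only, not of substance.
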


\begin{remark}
    The double cells of this double category consists hence in 2-cells $ \psi : gh \Rightarrow kf$ intertwinning the carriers of the lax and colax morphism structures such that the equality of 2-cell holds
\[\begin{tikzcd}
	& B & TB \\
	A & TA && TD \\
	& C & TC
	\arrow["\beta", from=1-2, to=1-3]
	\arrow[""{name=0, anchor=center, inner sep=0}, "Tk", from=1-3, to=2-4]
	\arrow[""{name=1, anchor=center, inner sep=0}, "f", from=2-1, to=1-2]
	\arrow["\alpha"{description}, from=2-1, to=2-2]
	\arrow[""{name=2, anchor=center, inner sep=0}, "h"', from=2-1, to=3-2]
	\arrow[""{name=3, anchor=center, inner sep=0}, "Tf"{description}, from=2-2, to=1-3]
	\arrow[""{name=4, anchor=center, inner sep=0}, "Th"{description}, from=2-2, to=3-3]
	\arrow["\gamma"', from=3-2, to=3-3]
	\arrow["Tg"', from=3-3, to=2-4]
	\arrow["\eta"{description}, shorten <=6pt, shorten >=6pt, Rightarrow, from=2, to=4]
	\arrow["\phi"{description}, shorten <=6pt, shorten >=6pt, Rightarrow, from=3, to=1]
	\arrow["{T\psi}"{description}, shorten <=7pt, shorten >=7pt, Rightarrow, from=4, to=0]
\end{tikzcd} = 
\begin{tikzcd}
	& B & TB \\
	A && D & TD \\
	& C & TC
	\arrow["\beta", from=1-2, to=1-3]
	\arrow[""{name=0, anchor=center, inner sep=0}, "k"{description}, from=1-2, to=2-3]
	\arrow[""{name=1, anchor=center, inner sep=0}, "Tk", from=1-3, to=2-4]
	\arrow["f", from=2-1, to=1-2]
	\arrow[""{name=2, anchor=center, inner sep=0}, "h"', from=2-1, to=3-2]
	\arrow["\delta"{description}, from=2-3, to=2-4]
	\arrow[""{name=3, anchor=center, inner sep=0}, "g"{description}, from=3-2, to=2-3]
	\arrow["\gamma"', from=3-2, to=3-3]
	\arrow[""{name=4, anchor=center, inner sep=0}, "Tg"', from=3-3, to=2-4]
	\arrow["\chi", shorten <=6pt, shorten >=6pt, Rightarrow, from=0, to=1]
	\arrow["\psi"{description}, shorten <=7pt, shorten >=7pt, Rightarrow, from=2, to=0]
	\arrow["\kappa"', shorten <=6pt, shorten >=6pt, Rightarrow, from=4, to=3]
\end{tikzcd}\]
Of course one could also consider oplax squares as double cells, with the dual of this coherence conditions: this mirror the duplication of the category of quintets into the lax and oplax version. 
\end{remark}

Bare endofunctors are seldom considered without an additional structure of \emph{comonad}; here we chose to dissociate those notions as we will have ton consider objects bearing a coalgebra structure only for the copointed endo-2-functor of some 2-comonad. However, it is worth discussing also the comonadic aspects of the question, for which we recall here the axioms of comonad and the additional structure required on their coalgebra.

\begin{definition}
    A \emph{2-comonad} on a 2-category $ \mathcal{K}$ is a copointed endo-2-functor $ (T,\epsilon) $ of $\mathcal{K}$ which is additionally endowed with a \emph{comultiplication} $ \delta : T \Rightarrow TT$ satisfying the following strict natural commutations
\[\begin{tikzcd}
	& T \\
	T & TT & T
	\arrow[equals, from=1-2, to=2-1]
	\arrow["\delta"', Rightarrow, from=1-2, to=2-2]
	\arrow[equals, from=1-2, to=2-3]
	\arrow["{\epsilon_T}", Rightarrow, from=2-2, to=2-1]
	\arrow["{T\epsilon}"', Rightarrow, from=2-2, to=2-3]
\end{tikzcd} \hskip1cm
\begin{tikzcd}
	T & TT \\
	TT & TTT
	\arrow["\delta", Rightarrow, from=1-1, to=1-2]
	\arrow["\delta"', Rightarrow, from=1-1, to=2-1]
	\arrow["{\delta_T}", Rightarrow, from=1-2, to=2-2]
	\arrow["{T\delta}"', Rightarrow, from=2-1, to=2-2]
\end{tikzcd}\]
\end{definition}

\begin{definition}
    Let $T$ be a 2-comonad on $\mathcal{K}$; a \emph{normal lax} (resp. \emph{normal oplax}) coalgebra \emph{for the comonad $(T,\epsilon, \delta)$} is a strict coalgebra $(C, \gamma)$ for the underlying copointed endo-2-functor $(T,\epsilon)$ equipped with an additional 2-cell
\[\begin{tikzcd}
	C & TC \\
	TC & TTC
	\arrow["\gamma", from=1-1, to=1-2]
	\arrow[""{name=0, anchor=center, inner sep=0}, "\gamma"', from=1-1, to=2-1]
	\arrow[""{name=1, anchor=center, inner sep=0}, "{\delta_C}", from=1-2, to=2-2]
	\arrow["{T\gamma}"', from=2-1, to=2-2]
	\arrow["\lambda"', shorten <=6pt, shorten >=6pt, Rightarrow, from=1, to=0]
\end{tikzcd} \hskip1cm \textup{resp.} \hskip1cm
\begin{tikzcd}
	C & TC \\
	TC & TTC
	\arrow["\gamma", from=1-1, to=1-2]
	\arrow[""{name=0, anchor=center, inner sep=0}, "\gamma"', from=1-1, to=2-1]
	\arrow[""{name=1, anchor=center, inner sep=0}, "{\delta_C}", from=1-2, to=2-2]
	\arrow["{T\gamma}"', from=2-1, to=2-2]
	\arrow["\rho", shorten <=6pt, shorten >=6pt, Rightarrow, from=0, to=1]
\end{tikzcd}\]
satisfying some coherences described for instance in \cite{marmolejo1999distributive}. A \emph{strict} coalgebra \emph{for $(T, \epsilon, \delta)$} will be a coalgebra for $(T,\epsilon)$ such that one has a strict commutation
\[\begin{tikzcd}
	C & TC \\
	TC & TTC
	\arrow["\gamma", from=1-1, to=1-2]
	\arrow[""{name=0, anchor=center, inner sep=0}, "\gamma"', from=1-1, to=2-1]
	\arrow[""{name=1, anchor=center, inner sep=0}, "{\delta_C}", from=1-2, to=2-2]
	\arrow["{T\gamma}"', from=2-1, to=2-2]
	\arrow["{=}"{description}, draw=none, from=0, to=1]
\end{tikzcd}\]
\end{definition}

\begin{definition}
    A \emph{lax} (resp. \emph{colax}) morphism of coalgebras for the comonad $ (T,\epsilon, \delta)$ is a lax (resp. colax) morphism of coalgebras $(f,\phi)$ for $(T,\epsilon)$ such that the following coherence is satisfied
\[\begin{tikzcd}
	C & B \\
	TC & TC & TB \\
	& TTC & TTB
	\arrow["f"{description}, from=1-1, to=1-2]
	\arrow["\gamma"', from=1-1, to=2-1]
	\arrow[""{name=0, anchor=center, inner sep=0}, "\gamma"{description}, from=1-1, to=2-2]
	\arrow[""{name=1, anchor=center, inner sep=0}, "\beta", from=1-2, to=2-3]
	\arrow["{T\gamma}"', from=2-1, to=3-2]
	\arrow["{T_f}"{description}, from=2-2, to=2-3]
	\arrow["{\delta_C}"{description}, from=2-2, to=3-2]
	\arrow["{\delta_B}", from=2-3, to=3-3]
	\arrow["TTf"', from=3-2, to=3-3]
	\arrow["\phi"', shorten <=6pt, shorten >=6pt, Rightarrow, from=1, to=0]
\end{tikzcd} = 
\begin{tikzcd}
	C & B \\
	TC & TB & TB \\
	& TTC & TTB
	\arrow["f"{description}, from=1-1, to=1-2]
	\arrow[""{name=0, anchor=center, inner sep=0}, "\gamma"', from=1-1, to=2-1]
	\arrow[""{name=1, anchor=center, inner sep=0}, "\beta"{description}, from=1-2, to=2-2]
	\arrow["\beta", from=1-2, to=2-3]
	\arrow["Tf"{description}, from=2-1, to=2-2]
	\arrow[""{name=2, anchor=center, inner sep=0}, "{T\gamma}"', from=2-1, to=3-2]
	\arrow[""{name=3, anchor=center, inner sep=0}, "{T\beta}"{description}, from=2-2, to=3-3]
	\arrow["{\delta_B}", from=2-3, to=3-3]
	\arrow["TTf"', from=3-2, to=3-3]
	\arrow["\phi"', shorten <=6pt, shorten >=6pt, Rightarrow, from=1, to=0]
	\arrow["{T\phi}"', shorten <=6pt, shorten >=6pt, Rightarrow, from=3, to=2]
\end{tikzcd}\]
\end{definition}

\begin{remark}
    We are going to consider a comonad whose counit is fibered in poset: as a consequence, the eventual lax or oplax coalgebra data will be only statement of an inequality rather than specification of some 2-cell; moreover the distinction between strict and \emph{pseudo}, either for coalgebras or morphism, will vanish for invertible 2-cells in coherence data will be forcibly equalities. Similarly, we will see that the coherences of lax and colax morphisms relative to the comultiplication will become inequality statement that will be trivially satisfied.
\end{remark}

\subsection{The covers comonad}

We introduce here a canonical copointed endo-2-functor on $\Cat$, which one may see as a \emph{cofree site construction}, sending a category to a category of all possible choices of sets (or filters) of sieves on objects: then sites will be revealed as coalgebras for this endo-2-functor, and morphisms and comorphisms of sites respectively as lax and colax morphisms. We will see moreover that this endo-2-functor actually bears a comonad structure, though sites will only bear a structure of normal lax coalgebra. We will also consider in the last section a relative version of this construction for which this hindrance vanishes and which exhibits some notion of relative sites as coalgebras for the full comonad structure.

\begin{division}[Filters of sieves]
Recall that for a Grothendieck coverage on a category $ \mathcal{C}$, the set $J(c)$ of $J$-covering sieves $ S \rightarrowtail \hirayo_c$ on an object $c$ form a poset, which is a subposet $ J(c)$ of the poset of subobjects $\Sub_{\widehat{\mathcal{C}}}\hirayo_c$. It is a upset as $ S \leq S'$ and $S$ covering implies that $ S'$ is covering, and $ \hirayo_c$ is always covering; moreover, in the case where $ J$ satisfies the locality axiom, it is even a filter, that is, is closed under intersections.\end{division}

\begin{definition}
    For each category $\mathcal{C}$ and each $c$ in $\mathcal{C}$, define $ \mathbb{F}_\mathcal{C}(c)$ the poset of filters of $\Sub_{\widehat{\mathcal{C}}} \hirayo_c$ that is, is objects are filters $ F \hookrightarrow \Sub_{\widehat{\mathcal{C}}}$, non empty-upsets containing the top element $ \hirayo_c$
\end{definition}

 \begin{remark}
     We will say filters for concision, though in the general case of a Grothendieck coverage we do not need to consider intersections of sieves.
 \end{remark}

\begin{division}
  For a morphism $ u : c \rightarrow c'$ in $\mathcal{C}$, each $ S \rightarrowtail \hirayo_{c'}$ defines a pullback sieve $ u^*S \rightarrow \hirayo_c$, and this defines a morphism of posets 
\[\begin{tikzcd}
	{\Sub_{\widehat{\mathcal{C}}}(c')} & {\Sub_{\widehat{\mathcal{C}}}(c)}
	\arrow["{ u^*}", from=1-1, to=1-2]
\end{tikzcd}\]

If now $ F$ is a filter of $\Sub_{\widehat{\mathcal{C}}}(c')  $, the inverse image $ (u^*)^{-1}(F) = \{ R \rightarrowtail \hirayo_{c'} \mid u^*R \in F \} $ is a filter of $ \Sub_{\widehat{\mathcal{C}}}(c) $. Hence we have a morphism of posets 
\[\begin{tikzcd}
	{\mathbb{F}_\mathcal{C}(c) } & {\mathbb{F}_\mathcal{C}(c')}
	\arrow["{(u^*)^{-1}}", from=1-1, to=1-2]
\end{tikzcd}\]

Observe that at the level of subposets (rather than filters) this morphism as a left adjoint $ u^*[-]$ sending a filter (or more generally a subposet) $ F \hookrightarrow \Sub_{\widehat{\mathcal{C}}}\hirayo_{c'}$ to the subposet $ \{ u^*R \mid R \rightarrowtail \hirayo_{c'} \}$ of $ \Sub_{\widehat{\mathcal{C}}} \hirayo_c$. Indeed one has $ u^*[F'] \leq F$ as subposets if and only if $ F' \leq (u^*)^{-1}(F)$ as filters.
\end{division}

\begin{division}[The category $ \mathbb{S}_\mathcal{C}$]

Then define the poset-valued functor $ \mathbb{F}_\mathcal{C} : \mathcal{C} \rightarrow \Pos$ sending $ c$ to $ \mathbb{F}_\mathcal{C}(c)$ and a morphism $u :c \rightarrow c'$ in $\mathcal{C}$ to the posets morphism $ (u^*)^{-1}: \mathbb{F}_\mathcal{C}(c) \rightarrow \mathbb{F}_\mathcal{C}(c')$. The Grothendieck construction associated to this functor defines an opfibration with posetal fibers 
\[\begin{tikzcd}
	{\displaystyle\int \mathbb{F}_{\mathcal{C}}} & {\mathcal{C}}
	\arrow["{\pi_\mathcal{C}}", from=1-1, to=1-2]
\end{tikzcd}\]
In the following we will denote as $\mathbb{S}(\mathcal{C})$ the category $ \int \mathbb{F}_\mathcal{C}$; its objects are pairs $(c,F)$ with $ c$ an object of $\mathcal{C}$ and $ F$ a filter of $ \Sub_{\widehat{\mathcal{C}}}\hirayo_c$, while a morphism $ (c,F) \rightarrow (c',F')$ is a morphism $ c \rightarrow c'$ satisfying the property that $F' \leq (u^*)^{-1}(F)$ -- equivalently, that $ u^*[F'] \leq F$ which amount to asking that for any $ R \rightarrowtail \hirayo_{c'}$ one has $ u^*R \in F$, as visualized by the condition that $u^*$ restricts between $F'$ and $F$ seen as subposets
\[\begin{tikzcd}
	{\Sub_{\widehat{\mathcal{C}}}\hirayo_{c'}} & {\Sub_{\widehat{\mathcal{C}}}\hirayo_c} \\
	{F'} & F
	\arrow["{u^*}", from=1-1, to=1-2]
	\arrow[hook, from=2-1, to=1-1]
	\arrow[dashed, from=2-1, to=2-2]
	\arrow[hook, from=2-2, to=1-2]
\end{tikzcd}\] 
Remark that this defines correctly a category as those arrows composes:
if one has composable pair $ u : (c, F) \rightarrow (c',F')$ and $(u' : (c', F') \rightarrow (c'', F'')$, then one has $ u : c \rightarrow c'$ and $ u': c' \rightarrow c''$ in $\mathcal{C}$ such that $ F' \leq (u^*)^{-1}(F)$ and $ F'' \leq (u'^*)^{-1}(F')$ so one has $ F'' \leq ((u'u)^{*})^{-1}(F)$. 
\end{division}

\begin{definition}
    We will call the category $ \mathbb{S}\mathcal{C}$ defined above the \emph{cofree site} over $ \mathcal{C}$. 
\end{definition}

\begin{remark}
    One may observe that morphisms in $\mathbb{S}\mathcal{C}$ differ from the way they are defined in the usual Grothendieck construction for covariant functors. This is because usually one refer by this name to the \emph{lax colimit} of the functor, in which morphisms would be defined in a different ways. Here in fact, our category $ \mathbb{S}\mathcal{C}$ is the opposite category of the \emph{oplax colimit} of $\mathbb{F}$: indeed a morphism in this oplax colimit $u :(c', F') \rightarrow (c,F)$ is a pasting of the following form
\[\begin{tikzcd}
	& {\mathbb{F}_\mathcal{C}(c')} \\
	1 && {\underset{c \in \mathcal{C}}\oplaxcolim \; \mathbb{F}_\mathcal{C}(c)} \\
	& {\mathbb{F}_\mathcal{C}(c) }
	\arrow[""{name=0, anchor=center, inner sep=0}, "{q_{c'}}", end anchor =170, from=1-2, to=2-3]
	\arrow[""{name=1, anchor=center, inner sep=0}, "{F'}", from=2-1, to=1-2]
	\arrow[""{name=2, anchor=center, inner sep=0}, "F"', from=2-1, to=3-2]
	\arrow["{(u^*)^{-1}}"{description}, from=3-2, to=1-2]
	\arrow[""{name=3, anchor=center, inner sep=0}, "{q_c}"', end anchor =-160, from=3-2, to=2-3]
	\arrow["{q_u}"', shift right=2, shorten <=5pt, shorten >=5pt, Rightarrow, from=0, to=3]
	\arrow["{{\leq}}"', shorten <=4pt, shorten >=4pt, Rightarrow, from=1, to=2]
\end{tikzcd}\]
where $ q_u : (c', (u^*)^{-1}(F)) \rightarrow (c,F)$ represents the generic cartesian morphism at $u$ while the left cell represent an inequality $ F' \leq (u^*)^{-1}(F)$; but this oplax colimit is fibered over $ \mathcal{C}^{\op}$ while we need a construction which admits a covariant canonical projection to $ \mathcal{C}$ itself: hence our choice to consider $ \mathbb{S}\mathcal{C} = \oplaxcolim_{c \in \mathcal{C}} \mathbb{F}_\mathcal{C}(c)$, where such data are formally inverted to be represented by a morphism $u : (c,F) \rightarrow (c',F')$, though the left cell still codes for the same inequality. Hence the discrepancy with the usual Grothendieck construction. 
\end{remark}

\begin{division}[The endofunctor $ \mathbb{S}$]
Moreover this construction is functorial on $\Cat$: for a functor $ f : \mathcal{C} \rightarrow \mathcal{D}$, recall that one can define for each sieve $ S \rightarrowtail \hirayo_c$ in $\mathcal{C}$ the sieve $ \lext_f S = \{ v:d \rightarrow f(c) \mid  \exists u : c' \rightarrow c \in S \textup{ such that } v \leq f(u) \}$. For a given filter $F$ of $ \Sub_{\widehat{\mathcal{C}}}\hirayo_c$, one can consider the filter $ \lext_f[F]$ generated from the set of sieves of the form $ \lext_f(S)$ for $S$ in $F$. This filter contains every sieve $ R \rightarrow \hirayo_{f(c)}$ that contains a sieve of the form $ \lext_f(S)$ for $S$ in $F$. 

Then for a functor $f : \mathcal{C} \rightarrow \mathcal{D}$, define $ \mathbb{S}f$ as sending $ (c,F)$ to $(f(c), \lext_f[F])$: then for $ u : (c',F') \rightarrow (c,F)$ and $R \in \lext_f[F]$, there is $ S \in F'$ such that $ u\circ S \leq R$, hence $ f(u) \circ \lext_f(S) \leq \lext_f(R)$ and $ \lext_f(S) \in \lext_f[F']$, so that $ f(u)$ defines a map $ (f(c'), \lext_f[F']) \rightarrow (f(c), \lext_f[F])$. Hence by up-closure, for $ S \leq u^*R$, one has $u^*R \in F'$, and hence $ f(u)^*[\lext_f(R)] \in \lext_f[F']$: this ensures that $ f(u)^*[\lext_f[F]] \leq \lext_f[F']$, so that $f(u)$ defines a morphism $ (f(c'), F') \rightarrow (f(c), F)$ in $\mathbb{S}\mathcal{D}$, so that $ \mathbb{S}f$ is a functor.

 For a natural transformation $ \phi : f \Rightarrow g$, we want a natural transformation $ \mathbb{S}\phi : \mathbb{S}f \Rightarrow \mathbb{S}g$, which requires $ \mathbb{S}\phi_{(c,F)} : (f(c), \lext_f[F]) \rightarrow (g(c), \lext_{g}[F])$ to define a morphism in $\mathbb{S}\mathcal{D}$: let be $ S \in \lext_g[F]$; then there is $R$ in $ F$ such that $S$ is generated from the $ g(v)$ with $v \in S$; but then the naturality square of $\phi$ at $v : c' \rightarrow c$ ensures that for any $v \in S$ one has $ \phi_c f(v) = g(v) \phi_{c'} $ so that $\phi_c \circ \lext_f[S] \leq R$.  

\end{division}

\begin{division}[In term of restrictions]
Seeing a filter $F $ of $\Sub_{\mathcal{C}}(\hirayo_c)$ as a poset map $ F : \Sub_{\mathcal{C}}(\hirayo_c) \rightarrow 2$, the direct and inverse images correspond respectively to the following extension and precomposition functors
\[\begin{tikzcd}
	{\Sub_{\mathcal{C}}(\hirayo_c)} & 2 \\
	{\Sub_{\mathcal{D}}(\hirayo_{f(c)})}
	\arrow["F", from=1-1, to=1-2]
	\arrow["{\lext_f}"', from=1-1, to=2-1]
	\arrow[""{name=0, anchor=center, inner sep=0}, "{\lext_f[F] }"', from=2-1, to=1-2]
	\arrow["\leq"{marking, allow upside down, pos=0.4}, draw=none, from=1-1, to=0]
\end{tikzcd} \hskip1cm 
\begin{tikzcd}
	{\Sub_{\mathcal{C}}(\hirayo_c)} & 2 \\
	{\Sub_{\mathcal{D}}(\hirayo_{f(c)})}
	\arrow["F", from=1-1, to=1-2]
	\arrow["{\rest_f}", from=2-1, to=1-1]
	\arrow["{\rest_f^{-1}(F)}"', from=2-1, to=1-2]
\end{tikzcd}\]
and by generality on extensions along adjoint we know that $ \lan_{\lext_f} \simeq \rest_f^{-1}$, that is
\[  \lext_f[F] \simeq (\rest_f)^{-1}(F) = \{ S \in \Sub_{\mathcal{D}}(\hirayo_{f(c)}) \mid \rest_f(S) \in F \}  \]
\end{division}

\begin{remark}[More general version]\label{more general version}
    We could have defined alternatively the endo-2-functor $\mathcal{S}$ in a slightly more general manner that will fit for non-Grothendieck coverages. For $ \mathcal{C}$ a category, we could define $ \mathbb{S}_0\mathcal{C}$ as having\begin{itemize}
        \item as objects pairs $ (c,F)$ with $c \in \mathcal{C}$ and $ F $ a \emph{set} of sieves on $c$ 
        \item as morphisms $ u : (b,G) \rightarrow (c,F)$ morphisms $u : b \rightarrow c$ in $\mathcal{C}$ such that one has
        \[  \forall R \in F , \exists S \in G, \forall v \in S, uv \in R \]
    \end{itemize}

    In this formulation we only require that for any sieve $R$ in $F$ there is some $S \in G$ contained in $u^*R$, though we do not explicitly stipulate that $u^*R$ is in $G$, which is not automatic as $G$ is not required to be upclosed. However in our context, it will be more practical to enforce upclosure of the sets of sieves and use explicitely pullback sieves. 
\end{remark}

 \begin{division}[Counit and comultiplication]
     The endo-2-functor $\mathbb{S}$ is copointed through the natural projections which will play the role of counits: 
\[\begin{tikzcd}
	{\mathbb{S}\mathcal{C}} & {\mathbb{S}\mathcal{D}} \\
	{\mathcal{C}} & {\mathcal{D}}
	\arrow["{\mathbb{S}f}", from=1-1, to=1-2]
	\arrow["{\pi_\mathcal{C}}"', from=1-1, to=2-1]
	\arrow["{\pi_\mathcal{D}}", from=1-2, to=2-2]
	\arrow["f"', from=2-1, to=2-2]
\end{tikzcd}\]
 
We now want to endow $\mathbb{S}$ with a comultiplication structure to make it a comonad. Beforehand we must give a minimal amount of details about the iterated cofree site $ \mathbb{S}\mathbb{S}\mathcal{C}$: its objects are triples $ (c,F, \mathcal{F})$ with $ (c,F)$ in $\mathbb{S}\mathcal{C}$ and $ \mathcal{F}$ a filter of sieves over $ (c,F)$ in $\mathbb{S}\mathcal{C}$; here a sieve will be simply a downard closed collection of arrows $ v : (d,G) \rightarrow (c,F)$, which have to satisfy the condition that $ F \leq (v^*)^{-1}(G)$, that is, that for any sieve $S$ over $c$ in $F$, the pullback sieve $ v^*S$ is in the filter $G$. 

Then there is a canonical way to embed $\mathbb{S} \mathcal{C}$ into $ \mathbb{S}\mathbb{S}\mathcal{C}$: for an object $ (c,F)$ in $\mathbb{S}\mathcal{C}$ and $S \in F$, we can define the \emph{stabilizing sieve} in $\mathbb{S} \mathcal{C}$ as the sieve generated from the stabilization $ v : (b,u^*[F]) \rightarrow (c,F)$, that is,
\[  \mathcal{R}_{(S,F)} = \bigg{\{} v : (d,G) \rightarrow (c,F) \mid \exists u \in S \textup{ such that } 
\begin{tikzcd}[sep=small]
	{(d,G)} && {(c,F)} \\
	& {(b, u^*[F])}
	\arrow["v", from=1-1, to=1-3]
	\arrow["{\exists w}"', from=1-1, to=2-2]
	\arrow["u"', from=2-2, to=1-3]
\end{tikzcd} \bigg{\}} \]

Then we define, for $ (c,F)$ of $\mathbb{S}\mathcal{C}$, the filter generated from stabilization of sieves in $F$:
\[ \mathcal{F}_{(c,F)} = \uparrow \{ \mathcal{R}_{(S,F)} \mid S \in F  \}\]
in such a way that a sieve on $(c,F)$ is in $ \mathcal{F}_{(c,F)}$ if and only if it contains a stabilizing sieve of the form $ \mathcal{R}_{(S,F)}$ with $S$ in $F$.

\end{division}

\begin{lemma}
    For any category $ \mathcal{C}$, the construction above defines a functor 
\[\begin{tikzcd}
	{\mathbb{S}\mathcal{C}} & {\mathbb{S}\mathbb{S}\mathcal{C}}
	\arrow["{\delta_{\mathcal{C}}}", from=1-1, to=1-2]
\end{tikzcd}\]
sending $ (c,F)$ to $(c,F, \mathcal{F}_{(c,F)})$, that is to its filter of stabilizations.
\end{lemma}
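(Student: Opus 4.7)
The plan is to verify three things in order: (i) $(c,F,\mathcal{F}_{(c,F)})$ is a valid object of $\mathbb{S}\mathbb{S}\mathcal{C}$; (ii) every morphism $u \colon (c,F) \to (c',F')$ of $\mathbb{S}\mathcal{C}$ remains a morphism $(c,F,\mathcal{F}_{(c,F)}) \to (c',F',\mathcal{F}_{(c',F')})$ of $\mathbb{S}\mathbb{S}\mathcal{C}$; and (iii) functoriality with respect to identities and composition, which will be automatic since the prescribed assignment acts as identity on underlying arrows.

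For (i), one must first check that each generator $u \colon (b,u^{*}[F]) \to (c,F)$ is a legitimate morphism of $\mathbb{S}\mathcal{C}$: by construction of $u^{*}[F]$, any $R \in F$ satisfies $u^{*}R \in u^{*}[F]$, so $F \leq (u^{*})^{-1}(u^{*}[F])$. Downward closure of each generating sieve $\mathcal{R}_{(S,F)}$ in $\mathbb{S}\mathcal{C}$ is routine: precomposing a factorization $v = u \circ v'$ by any arrow of $\mathbb{S}\mathcal{C}$ keeps the same stabilizing section $(b,u^{*}[F])$. Hence $\mathcal{F}_{(c,F)}$ is a non-empty, upclosed collection of sieves on $(c,F)$ in $\mathbb{S}\mathcal{C}$, that is, a filter, so the assignment is well defined on objects.

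Point (ii) is the substance. A morphism $u \colon (c,F) \to (c',F')$ of $\mathbb{S}\mathcal{C}$ amounts to $u^{*}[F'] \leq F$. To verify the relation $\mathcal{F}_{(c',F')} \leq (u^{*})^{-1}(\mathcal{F}_{(c,F)})$, it suffices by upclosure of $\mathcal{F}_{(c,F)}$ to prove that, for each $S \in F'$, $u^{*}\mathcal{R}_{(S,F')} \in \mathcal{F}_{(c,F)}$; this will follow from the containment
\[
\mathcal{R}_{(u^{*}S,\,F)} \,\leq\, u^{*}\mathcal{R}_{(S,F')}.
\]
Indeed, given $v \colon (d,G) \to (c,F)$ in the left-hand side, one has $v = w \circ v'$ with $w \in u^{*}S$ (so $uw \in S$) and $v' \colon (d,G) \to (b, w^{*}[F])$. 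The key calculation
\[
(uw)^{*}[F'] \,=\, w^{*}(u^{*}[F']) \,\leq\, w^{*}[F]
\]
shows that the identity of $b$ is a legitimate morphism $(b, w^{*}[F]) \to (b, (uw)^{*}[F'])$ of $\mathbb{S}\mathcal{C}$, so $v'$ equally defines an arrow into the stabilized object $(b, (uw)^{*}[F'])$; consequently $uv = (uw) \circ v'$ is a factorization through the stabilizing section of $uw \in S$, that is, $uv \in \mathcal{R}_{(S,F')}$, and $v \in u^{*}\mathcal{R}_{(S,F')}$. Since $u^{*}[F'] \leq F$ forces $u^{*}S \in F$, the sieve $\mathcal{R}_{(u^{*}S,F)}$ belongs to $\mathcal{F}_{(c,F)}$, and upclosure concludes the argument. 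Functoriality in (iii) is then immediate.

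The main obstacle is exactly the containment above; its essence is that the filter $w^{*}[F]$ obtained by stabilizing $F$ along $w$ is automatically rich enough to contain $(uw)^{*}[F']$, which is precisely where the morphism hypothesis $u^{*}[F'] \leq F$ enters in a decisive way. All remaining verifications are bookkeeping, turning on the fact that $\delta_\mathcal{C}$ does not alter the underlying arrows of $\mathbb{S}\mathcal{C}$.
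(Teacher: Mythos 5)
Your proof is correct and follows essentially the same route as the paper's: the whole weight is carried by the containment $\mathcal{R}_{(u^*S,F)} \leq u^*\mathcal{R}_{(S,F')}$ for $S \in F'$, which is exactly the paper's key step (there written as $\mathcal{R}_{(v^*S,G)} \leq v^*[\mathcal{R}_{(S,F)}]$ for a map $v:(d,G)\to(c,F)$), and in both cases it is established by using the morphism hypothesis to show the stabilized domain $(b,w^*[F])$ maps into the stabilization $(b,(uw)^*[F'])$. The extra verifications you include (well-definedness of the generators and of the filter $\mathcal{F}_{(c,F)}$, functoriality on identities and composites) are routine bookkeeping the paper leaves implicit.
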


\begin{proof}
  We have to check that a map $ v: (d,G) \rightarrow (c,F)$ with $ v^*[F] \leq G$ will induce a map
\[\begin{tikzcd}
	{(d,G, \mathcal{F}_{(d,G)})} & {(c,F, \mathcal{F}_{(c,F)})}
	\arrow["v", from=1-1, to=1-2]
\end{tikzcd}\]
which is to say that $v^*[\mathcal{F}_{(c,F)}] \leq \mathcal{F}_{(d,G)} $. It suffices to check that if $S \in F$, then the image of its stabilizing sieve $ v^*[\mathcal{R}_{(S,F)}]$ contains a stabilizing sieve of the form $\mathcal{R}_{(R,G)} $ for $ R \in G$. The sieve $ v^*[\mathcal{R}_{(S,F)}]$ is the sieve on $(d,G)$ defined as
\[ v^*[\mathcal{R}_{(S,F)}] = \bigg{\{} w: (a,H) \rightarrow (d,G)  \mid \exists u \in S \textup{ such that } 
\begin{tikzcd}[sep=small]
	{(d,G)} & {(c,F)} \\
	{(a,H)} & {(b,u^*[F])}
	\arrow["v", from=1-1, to=1-2]
	\arrow["w", from=2-1, to=1-1]
	\arrow["{\exists x}"', dashed, from=2-1, to=2-2]
	\arrow["{u \in S}"', from=2-2, to=1-2]
\end{tikzcd} \bigg{\}} \]
so that in particular, if $w \in v^*[\mathcal{R}_{(S,F)}]$, then in particular the underlying map $w $ is in $ v^*[S]$ for it factorizes through some $ u \in S$, and moreover one has $x^*[u^*[F]] \leq H$. But from the very condition that $ v$ was a map in $ \mathbb{S}\mathcal{C}$ it is stipulated that $ v^*[F] \leq G$, so in particular $ v^*S \in G$, and we can then consider its stabilization $ \mathcal{R}_{(v^*[S],G)}$, which is then in $ \mathcal{F}_{(d,G)}$: but we can see that $ \mathcal{R}_{(v^*[S], G)} \in v^*[\mathcal{R}_{(S,F)}]$: indeed, if $w \in v^*[S]$, then there is some factorization as below with $u \in S$
\[\begin{tikzcd}
	d & c \\
	a & b
	\arrow["v", from=1-1, to=1-2]
	\arrow["w", from=2-1, to=1-1]
	\arrow["x"', from=2-1, to=2-2]
	\arrow["{u \in S}"', from=2-2, to=1-2]
\end{tikzcd}\]
and from $v^*F \leq G$, we have $ x^*[u^*[F]] = w^*[v^*[F]] \leq w^*[G]$ so that the square above lifts to a square in $ \mathbb{S}\mathcal{C}$
\[\begin{tikzcd}
	{(d,G)} & {(c,F)} \\
	{(a,w^*G)} & {(b,u^*[F])}
	\arrow["v", from=1-1, to=1-2]
	\arrow["w", from=2-1, to=1-1]
	\arrow["x"', from=2-1, to=2-2]
	\arrow["{u \in S}"', from=2-2, to=1-2]
\end{tikzcd}\]
which ensures that $w$ is in $v^*[\mathcal{R}_{(S,F)}]$ by its characterization above. This ensures functoriality of the construction $\delta_\mathcal{C}$.
\end{proof}

\begin{lemma}
    The data of the functors $ \delta_\mathcal{C}$ define altogether a natural transformation $\delta : \mathbb{S} \Rightarrow \mathbb{S}\mathbb{S}$.
\end{lemma}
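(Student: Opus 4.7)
The plan is to reduce naturality of $\delta$ to a single identity of filters and then verify both inclusions on generators. Unpacking the two composites at $(c, F) \in \mathbb{S}\mathcal{C}$ gives $\delta_\mathcal{D}\mathbb{S}f(c, F) = (f(c), \lext_f[F], \mathcal{F}_{(f(c), \lext_f[F])})$ and $\mathbb{S}\mathbb{S}f\delta_\mathcal{C}(c, F) = (f(c), \lext_f[F], \lext_{\mathbb{S}f}[\mathcal{F}_{(c, F)}])$. Since morphisms in $\mathbb{S}\mathbb{S}\mathcal{D}$ carry no extra data beyond the underlying morphism and an (automatically satisfied) inequality condition between filters, naturality on morphisms will follow automatically from agreement on objects, which reduces to the identity
\[ \mathcal{F}_{(f(c), \lext_f[F])} = \lext_{\mathbb{S}f}[\mathcal{F}_{(c, F)}]. \]

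Both sides are upsets of stabilizing sieves: the LHS is generated by $\mathcal{R}_{(T, \lext_f[F])}$ for $T \in \lext_f[F]$, the RHS by $\lext_{\mathbb{S}f}(\mathcal{R}_{(S, F)})$ for $S \in F$. For the inclusion $\mathcal{F}_{(f(c), \lext_f[F])} \subseteq \lext_{\mathbb{S}f}[\mathcal{F}_{(c, F)}]$, I would observe that by the very characterization of $\lext_f[F]$ each $T \in \lext_f[F]$ admits a generator $S \in F$ with $\lext_f S \leq T$; monotonicity of $\mathcal{R}_{(-,-)}$ in its first argument gives $\mathcal{R}_{(T, \lext_f[F])} \supseteq \mathcal{R}_{(\lext_f S, \lext_f[F])}$, and a direct unfolding of the stabilization definitions shows the latter contains $\lext_{\mathbb{S}f}(\mathcal{R}_{(S, F)})$, placing the LHS generator in the RHS upset.

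For the reverse inclusion, given $S \in F$, I would take $T = \lext_f S_0$ for a minimal generating sieve $S_0 \subseteq S$ of $F$; this $T$ lies in $\lext_f[F]$, and the claim to verify is $\mathcal{R}_{(T, \lext_f[F])} \subseteq \lext_{\mathbb{S}f}(\mathcal{R}_{(S_0, F)}) \subseteq \lext_{\mathbb{S}f}(\mathcal{R}_{(S, F)})$. The key computation is that for any $u \in S_0$ the pullback $u^* S_0$ contains $1$ (since $u \cdot 1 = u \in S_0$) and is therefore maximal, whence $\lext_f(u^* S_0) = \max = f(u)^*(\lext_f S_0)$; this forces the source filters $\lext_f[u^*[F]]$ and $f(u)^*[\lext_f[F]]$ to coincide on the relevant generator, so that the $\mathbb{S}f$-image of the stabilization of $u$ agrees with the $\mathbb{S}\mathcal{D}$-stabilization of $f(u)$, and the two $\mathcal{R}$-sieves coincide.

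The hard part will be navigating the general failure of $\lext_f$ to commute with pullback: one only has the one-sided inequality $\lext_f(u^* S) \leq f(u)^*(\lext_f S)$, so $\mathbb{S}f$ does not send stabilizations to stabilizations on the nose. If the inequality were an equality throughout, both filters would coincide elementwise and naturality would be immediate. The proof instead exploits that equality is restored precisely on those arrows $u$ which already belong to a generator $S_0$ of $F$, and then uses upclosure of the filter to extend the identification from these "diagonal" generators to the full filter on either side.
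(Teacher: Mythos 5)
The paper states this lemma without proof, so I can only assess your argument on its own terms. Your reduction of naturality to the single identity $\mathcal{F}_{(f(c),\lext_f[F])}=\lext_{\mathbb{S}f}[\mathcal{F}_{(c,F)}]$ is correct, and your first inclusion is sound: for $S\in F$ with $\lext_f S\leq T$ one indeed gets $\lext_{\mathbb{S}f}(\mathcal{R}_{(S,F)})\subseteq \mathcal{R}_{(T,\lext_f[F])}$, because the inequality $\lext_f(u^*S')\leq f(u)^*(\lext_f S')$ (valid for all $S'\in F$) makes the identity of $f(b)$ a morphism $(f(b),\lext_f[u^*[F]])\to (f(b),f(u)^*[\lext_f[F]])$ in $\mathbb{S}\mathcal{D}$; this is exactly the naturality analogue of \cref{inequality}.

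The reverse inclusion is where the proposal breaks down. First, a ``minimal generating sieve $S_0\subseteq S$ of $F$'' need not exist, since the fibres $\mathbb{F}_{\mathcal{C}}(c)$ are arbitrary upsets. More seriously, your key computation only compares the two domain filters on the generator $S_0$ itself, where $u^*S_0$ is trivially maximal; the morphism condition in $\mathbb{S}\mathcal{D}$ must be checked against \emph{every} $S'\in F$, and there one only has the wrong-way inequality $\lext_f(u^*S')\leq f(u)^*(\lext_f S')$, which makes $\lext_f[u^*[F]]$ a \emph{larger} filter than $f(u)^*[\lext_f[F]]$; upclosure cannot reverse this, and your argument never treats the stabilizations of arrows $t\in\lext_f S$ not of the form $f(u)$. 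These are precisely where the inclusion genuinely fails with the paper's definitions: take $\mathcal{C}$ the cospan $b_1\xrightarrow{u_1}c\xleftarrow{u_2}b_2$, let $F$ be the upset generated by the two sieves $\{u_1\}$ and $\{u_2\}$ (their intersection is not required to lie in $F$), and let $f$ be the inclusion into the commutative square $U_1w_1=U_2w_2=\theta$. Every $T\in\lext_f[F]$ contains $\theta$, so the stabilization $\theta:(E,\{\max_E\})\to(C,\lext_f[F])$ is a generator of every $\mathcal{R}_{(T,\lext_f[F])}$; but it does not lie in $\lext_{\mathbb{S}f}(\mathcal{R}_{(\{u_1\},F)})$, whose only generator is $U_1:(B_1,\lext_f[u_1^*[F]])\to(C,\lext_f[F])$ with $\emptyset=\lext_f(u_1^*\{u_2\})$ in its domain filter, and $w_1^*\emptyset=\emptyset\notin\{\max_E\}$, so the unique candidate factorization through $w_1$ is not a morphism of $\mathbb{S}\mathcal{D}$. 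Hence $\lext_{\mathbb{S}f}(\mathcal{R}_{(\{u_1\},F)})$ lies in $\lext_{\mathbb{S}f}[\mathcal{F}_{(c,F)}]$ but not in $\mathcal{F}_{(f(c),\lext_f[F])}$. So only the one-sided comparison of your first inclusion is available in general (a lax/oplax naturality cell), and the strict equality you aim for would need extra hypotheses (for instance principal or meet-closed filters, or the saturation built into the indexed construction $\mathbb{T}$ of the last section); as written, the second half of the proposal does not go through.
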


\begin{division}[Iteration of counit]
    In the following we are also going to consider the two possible iterations of the free construction at this projection:\begin{itemize}
    \item the functor $ \pi_{\mathbb{S}\mathcal{C}} : \mathbb{S}\mathbb{S}\mathcal{C} \rightarrow \mathbb{S}\mathcal{C}$ sends an object $ (c,F,\mathcal{F})$ to the underlying $ (c,F)$
    \item the functor $ \mathbb{S}\pi_\mathcal{C} : \mathbb{S}\mathbb{S}\mathcal{C} \rightarrow \mathbb{S}\mathcal{C}$ sends $ (c,F, \mathcal{F})$ to the pair $ (c,\lext_{\pi_\mathcal{C}}(\mathcal{F}))$, where $ \lext_{\pi_\mathcal{C}}(\mathcal{F})$ is the filter generated from sieve of the form $ \lext_{\pi_\mathcal{C}}(\mathcal{R})$ for $ \mathcal{R}$ a sieve in $\mathcal{F}$: those image sieve simply are the underlying sieves in $\mathcal{C}$ generated from the underlying sieve of $\mathcal{R}$ over $c$. 
\end{itemize}
\end{division}

\begin{proposition}
    The data $(\mathbb{S}, \pi, \delta)$ make $ \mathbb{S}$ a comonad on $\Cat$.
\end{proposition}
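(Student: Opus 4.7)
There are three identities to verify: the two counit triangles $\pi_{\mathbb{S}\mathcal{C}} \circ \delta_\mathcal{C} = \id_{\mathbb{S}\mathcal{C}}$ and $\mathbb{S}\pi_\mathcal{C} \circ \delta_\mathcal{C} = \id_{\mathbb{S}\mathcal{C}}$, together with coassociativity $\delta_{\mathbb{S}\mathcal{C}} \circ \delta_\mathcal{C} = \mathbb{S}\delta_\mathcal{C} \circ \delta_\mathcal{C}$. The first counit triangle is immediate from the definitions: $\pi_{\mathbb{S}\mathcal{C}}$ forgets the outermost filter, and this is exactly the filter $\mathcal{F}_{(c,F)}$ appended by $\delta_\mathcal{C}$, so the composite returns $(c,F)$ on the nose, and similarly acts as the identity on morphisms.

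For the second triangle, I would compute the underlying-projection of a stabilizing sieve. By definition $\lext_{\pi_\mathcal{C}}\mathcal{R}_{(S,F)}$ is the sieve on $c$ generated by the underlying arrows $u: b \to c$ with $u \in S$, because every $v: (d,G)\to(c,F)$ in $\mathcal{R}_{(S,F)}$ projects to an arrow in $\mathcal{C}$ factoring through some $u \in S$, and conversely any $u \in S$ admits the tautological lift $u: (b, u^*[F]) \to (c,F)$ into $\mathcal{R}_{(S,F)}$. Hence $\lext_{\pi_\mathcal{C}}\mathcal{R}_{(S,F)} = S$, and since $\mathcal{F}_{(c,F)}$ is generated by $\{\mathcal{R}_{(S,F)} \mid S \in F\}$, the image filter $\lext_{\pi_\mathcal{C}}[\mathcal{F}_{(c,F)}]$ is generated by $F$ and equals $F$ by upclosure.

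Coassociativity is the substantive step, and I expect it to be the main obstacle. Both composites land on $(c,F,\mathcal{F}_{(c,F)})$ at the level of the underlying object in $\mathbb{S}\mathbb{S}\mathcal{C}$, so what must be compared are the two resulting filters of sieves on this object, namely $\mathcal{F}_{((c,F),\mathcal{F}_{(c,F)})}$ from $\delta_{\mathbb{S}\mathcal{C}}$ versus $\lext_{\delta_\mathcal{C}}[\mathcal{F}_{(c,F)}]$ from $\mathbb{S}\delta_\mathcal{C}$. The plan is to compare generating families: a generator of the first filter has the form $\mathcal{R}_{(\mathcal{S},\mathcal{F}_{(c,F)})}$ for some $\mathcal{S} \in \mathcal{F}_{(c,F)}$, which by the definition of $\mathcal{F}_{(c,F)}$ means $\mathcal{S}$ contains some stabilizing sieve $\mathcal{R}_{(S,F)}$ with $S \in F$; a generator of the second filter has the form $\lext_{\delta_\mathcal{C}}[\mathcal{R}_{(S,F)}]$ for $S \in F$. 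Both should be shown to contain, and be contained in, the sieve $\mathcal{R}_{(\mathcal{R}_{(S,F)}, \mathcal{F}_{(c,F)})}$ up to the filter upclosure: unwinding the definitions, a morphism $v: (d,G,\mathcal{G}) \to (c,F,\mathcal{F}_{(c,F)})$ belongs to either side precisely when the underlying $v: d \to c$ factors through some $u \in S$ and the third-level datum $\mathcal{G}$ refines the stabilization $v^*[\mathcal{F}_{(c,F)}]$. The check that $\delta_\mathcal{C}$ stabilizes correctly, i.e.\ that the assignment $(c,F) \mapsto (c,F,\mathcal{F}_{(c,F)})$ commutes with the pullback-and-restrict operation used to form stabilizing sieves at the next level, is where care is needed; this uses the already-established fact that $v^*[\mathcal{F}_{(c,F)}] \leq \mathcal{F}_{(d,G)}$ whenever $v^*[F] \leq G$, from the preceding lemma. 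Once this double inclusion of generators is established, naturality in $\mathcal{C}$ is routine since every construction is functorial in $\mathcal{C}$ via $\lext_f$ and $f^*$, concluding the verification.
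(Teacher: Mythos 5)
Your handling of the two counit triangles is complete and essentially identical to the paper's, and your reduction of coassociativity to comparing $\mathcal{F}_{((c,F),\mathcal{F}_{(c,F)})}$ with $\lext_{\delta_\mathcal{C}}[\mathcal{F}_{(c,F)}]$, via the pivot sieve $\mathcal{R}_{(\mathcal{R}_{(S,F)},\mathcal{F}_{(c,F)})}$, is the right comparison and the same route the paper takes. The gap is in how you propose to justify that comparison. The fact you cite — $v^*[\mathcal{F}_{(c,F)}] \leq \mathcal{F}_{(d,G)}$ whenever $v : (d,G) \rightarrow (c,F)$ — only yields one of the two inclusions: it shows that each generator $\delta_\mathcal{C}(v) = ((d,G),\mathcal{F}_{(d,G)}) \rightarrow ((c,F),\mathcal{F}_{(c,F)})$ factors, via the identity of $(d,G)$, through the stabilization $((d,G),v^*[\mathcal{F}_{(c,F)}])$, hence $\lext_{\delta_\mathcal{C}}(\mathcal{R}_{(S,F)}) \subseteq \mathcal{R}_{(\mathcal{R}_{(S,F)},\mathcal{F}_{(c,F)})}$ and therefore $\mathcal{F}_{((c,F),\mathcal{F}_{(c,F)})} \leq \lext_{\delta_\mathcal{C}}[\mathcal{F}_{(c,F)}]$. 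But that is exactly the inequality which holds for \emph{any} coalgebra of the copointed endofunctor (the ``normal lax'' direction), so it cannot by itself give equality: the substantive half of your claimed membership criterion is the converse, namely that every morphism $v : (d,G,\mathcal{G}) \rightarrow (c,F,\mathcal{F}_{(c,F)})$ whose underlying arrow factors through some $u \in S$ and which satisfies $v^*[\mathcal{F}_{(c,F)}] \leq \mathcal{G}$ actually factors through a $\delta_\mathcal{C}$-image, i.e.\ $\mathcal{R}_{(\mathcal{R}_{(S,F)},\mathcal{F}_{(c,F)})} \subseteq \lext_{\delta_\mathcal{C}}(\mathcal{R}_{(S,F)})$. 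Since this is precisely the direction that fails for a general coalgebra (this failure is why coverages end up being only normal lax coalgebras), it does not follow from the lemma you invoke and needs its own argument, which your plan does not supply.

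To close it, factor the underlying arrow as $v = uw$ with $u \in S$, and lift $w$ to a morphism $((d,G),\mathcal{G}) \rightarrow \delta_\mathcal{C}(b,u^*[F]) = ((b,u^*[F]),\mathcal{F}_{(b,u^*[F])})$. Because $v^*[\mathcal{F}_{(c,F)}] \leq \mathcal{G}$, it suffices to show that $w^*[\mathcal{F}_{(b,u^*[F])}]$ lands in the filter generated by $v^*[\mathcal{F}_{(c,F)}]$, and for this one checks $v^*[\mathcal{R}_{(S',F)}] \subseteq w^*[\mathcal{R}_{(u^*S',\,u^*[F])}]$ for every $S' \in F$: if $y$ is such that $vy$ factors through some $s \in S'$ compatibly with the filters, then $wy$ already lies in the sieve $u^*S'$ and $(wy)^*[u^*[F]] = (vy)^*[F]$, so the tautological factorization exhibits $y$ in the right-hand sieve. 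Note that this uses the special shape $u^*[F]$ of the filter carried by the stabilization; the lemma you cite, applied to $w$, only bounds $w^*[\mathcal{F}_{(b,u^*[F])}]$ by the possibly larger filter $\mathcal{F}_{(d,G)}$, which is not enough. For what it is worth, the paper's own proof is also terse at exactly this point (it simply asserts that $\lext_{\delta_\mathcal{C}}(\mathcal{R})$ coincides with the sieve generated by the stabilizations), but the factorization above is the content behind that assertion, and your appeal to the functoriality lemma does not substitute for it.
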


\begin{proof}
    We have to validate the axioms of comonads. We fix some category $ \mathcal{C}$.\begin{itemize}
        \item For the first axioms: the composite $ \pi_{\mathbb{S}\mathcal{C}}\delta_\mathcal{C} $ is trivially the identity for the underling object of $ \delta_\mathcal{C}(c,F)$ is $(c,F)$ itself: so we do have the first retraction condition
\[\begin{tikzcd}
	{\mathbb{S}\mathbb{S}\mathcal{C}} & {\mathbb{S}\mathcal{C}} \\
	{\mathbb{S}\mathcal{C}}
	\arrow["{\pi_{\mathbb{S}\mathcal{C}}}", from=1-1, to=1-2]
	\arrow["{\delta_\mathcal{C}}", from=2-1, to=1-1]
	\arrow[equals, from=2-1, to=1-2]
\end{tikzcd}\] 
        \item For the second axiom, the composite $ \mathbb{S}\pi_{\mathcal{C}} \delta_\mathcal{C}$ sends $ (c,F)$ to the pair $ (c,\lext_{\pi_{\mathcal{C}}}[\mathcal{F}_{(c,F)}])$; but $ \lext_{\pi_{\mathcal{C}}}[\mathcal{F}_{(c,F)}]$ is the filter generated from underlying sieves $ \lext_{\pi_\mathcal{C}}(\mathcal{R}_{(S,F)})$, for $ S \in F$, but $ \mathcal{R}_{(S,F)}$ is precisely defined as the sieve $S$ itself equiped with stabilization of $F$, so its underlying sieve is $S$ itself. So we do have the second retraction condition
    \[\begin{tikzcd}
	{\mathbb{S}\mathbb{S}\mathcal{C}} & {\mathbb{S}\mathcal{C}} \\
	{\mathbb{S}\mathcal{C}}
	\arrow["{\mathbb{S}\pi_{\mathcal{C}}}", from=1-1, to=1-2]
	\arrow["{\delta_\mathcal{C}}", from=2-1, to=1-1]
	\arrow[equals, from=2-1, to=1-2]
    \end{tikzcd}\]
        \item For the last axioms, we compute first $ \delta_{\mathbb{S}\mathcal{C}} (c,F,\mathcal{F}) = ((c,F,\mathcal{F}), \mathcal{F}_{(c,F,\mathcal{F})})$ where  \(\mathcal{F}_{(c,F,\mathcal{F})} \) is the filter generated from sieves of the form $ \langle \{ (d,G, v^*[\mathcal{F}]) \rightarrow (c,F,\mathcal{F}) \}, v \in \mathcal{R} \rangle $ with $\mathcal{R} \in \mathcal{F}$. 
        On the other hand, $ \mathbb{S}\delta_{\mathcal{C}}(c,F,\mathcal{F}) = (c,F, \mathcal{F}, \lext_{\delta_\mathcal{C}}(\mathcal{F}))$, where $\lext_{\delta_\mathcal{C}}(\mathcal{F})$ is the filter generated from sieves of the form $ \lext_{\delta_\mathcal{C}}(\mathcal{R})$, which are precisely sieves generated from families of the form $\{ (d,G, v^*[\mathcal{F}]) \rightarrow (c,F,\mathcal{F}) , v \in \mathcal{R}\}$. Hence applying those composites to $ \delta_\mathcal{C}(c,F)$ clearly begets the same value for $ \lext_{\delta_\mathcal{C}}(\mathcal{F}_{(c,F)}) = \mathcal{F}_{((c,F), \mathcal{F}_{(c,F)})}$. This ensures the last condition 
\[\begin{tikzcd}
	{\mathbb{S}\mathcal{C}} & {\mathbb{S}\mathbb{S}\mathcal{C}} \\
	{\mathbb{S}\mathbb{S}\mathcal{C}} & {\mathbb{S}\mathbb{S}\mathbb{S}\mathcal{C}}
	\arrow["{\delta_\mathcal{C}}", from=1-1, to=1-2]
	\arrow["{\delta_\mathcal{C}}"', from=1-1, to=2-1]
	\arrow["{\delta_{\mathbb{S}\mathcal{C}}}", from=1-2, to=2-2]
	\arrow["{\mathbb{S}\delta_{\mathcal{C}}}"', from=2-1, to=2-2]
\end{tikzcd}\]
    \end{itemize}    
\end{proof}

\subsection{Sites morphisms and comorphisms as (co)lax morphisms}

We established that $ \mathbb{S}$ is a comonad on $\Cat$. This comonad sends a category to the category of all possible choices of filter of subobjects over its objects: as coverage and Grothendieck topologies pick exactly such a filter at each object, we can foreseen that they have something to do with coalgebra structures. This is what we discuss in this subsection; as a first step, it is quite easy to see that coverages are the same as sections of the projection maps (forgetting the comultiplication):

\begin{proposition}\label{sites as coalg for the endofunctor}
    A Grothendieck coverage on $\mathcal{C}$ is the same as a coalgebra structure on $\mathcal{C}$ for the underlying copointed endo-2-functor $(\mathbb{S}, \pi)$. 
\end{proposition}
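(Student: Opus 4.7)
The proof will be a direct unpacking of what the section condition $\pi_\mathcal{C} \gamma = 1_\mathcal{C}$ forces on a functor $\gamma : \mathcal{C} \to \mathbb{S}\mathcal{C}$, then a term-by-term matching with the defining axioms of a Grothendieck coverage.

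First, at the level of objects: since $\pi_\mathcal{C}$ sends $(c,F) \mapsto c$, the section condition forces $\gamma(c) = (c, J(c))$ for some $J(c) \in \mathbb{F}_\mathcal{C}(c)$, that is, an upward-closed set of sieves on $c$ containing the maximal sieve $\hirayo_c$. This recovers the object-wise data of a coverage, and the fact that $\hirayo_c \in J(c)$ is exactly the maximality axiom.

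Second, at the level of morphisms: for $u : c \to c'$, the section condition forces the underlying arrow of $\gamma(u)$ to be $u$ itself. By the explicit description of morphisms in the Grothendieck-style category $\mathbb{S}\mathcal{C}$, such a lift $(c,J(c)) \to (c',J(c'))$ exists iff $J(c') \leq (u^*)^{-1}(J(c))$, equivalently $u^*R \in J(c)$ for every $R \in J(c')$. This is exactly the stability axiom.

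Third, functoriality of $\gamma$ is automatic: the fibres of $\pi_\mathcal{C}$ are posetal, so once the underlying data of $\gamma$ is determined by the section condition, the morphism component is unique when it exists, and the compositionality of stability inequalities is immediate. The construction is plainly invertible: given a Grothendieck coverage $J$, define $\gamma$ on objects by $c \mapsto (c, J(c))$ and identically on morphisms; maximality makes each $J(c)$ a filter and stability makes every $u$ lift.

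There is no real obstacle to this verification; the only subtlety worth spelling out is the reconciliation between the literal definition of $\mathbb{F}_\mathcal{C}(c)$ (upsets containing the top) and the bare definition of a Grothendieck coverage (maximality plus stability, no explicit up-closure). This discrepancy is harmless, since replacing a coverage by its up-closure does not affect the induced notion of sheaves or of continuity, as noted earlier; if one wants a literal bijection on the nose one can equivalently use the variant $\mathbb{S}_0$ of \cref{more general version}.
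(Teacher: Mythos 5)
Your proof is correct and takes essentially the same route as the paper: unpacking the section condition for $\pi_\mathcal{C}$ at the level of objects yields the filters $J(c)$ containing the maximal sieve (maximality), and functoriality of the section over $\mathcal{C}$, where lifts of arrows are unique when they exist, yields exactly the stability condition. Your closing remark reconciling the up-closure built into $\mathbb{F}_\mathcal{C}(c)$ with the bare maximality-plus-stability definition of a Grothendieck coverage (and the pointer to the $\mathbb{S}_0$ variant) is a precision the paper glosses over, and is welcome rather than a deviation.
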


\begin{proof}
    A coalgebra defines a section $ J : \mathcal{C} \rightarrow \mathbb{S}\mathcal{C}$ of $\pi_\mathcal{C}$. This means that for any $c$ the object $J(c)$ is of the form $(c, J(c))$ with $J(c)$ a filter of $\Sub_{\widehat{\mathcal{C}}}\hirayo_c$, that is, consist of a collection of sieves on $c$ such that $\hirayo_c$ is in $J(c)$ (that is, the maximal sieve $ \max(c)$ is in $J(c)$), and $J(c)$ is up-closed for inclusion. Moreover functoriality says that for any $ u : c \rightarrow c'$, one has a restriction
\[\begin{tikzcd}
	{J(c)} & {\Sub_{\widehat{\mathcal{C}}}\hirayo_c} \\
	{J(c')} & {\Sub_{\widehat{\mathcal{C}}}\hirayo_{c'}}
	\arrow[hook, from=1-1, to=1-2]
	\arrow[dashed, from=2-1, to=1-1]
	\arrow[hook, from=2-1, to=2-2]
	\arrow["{u^*}"', from=2-2, to=1-2]
\end{tikzcd}\]
expressing that for any $R$ in $J(c')$ the pullback sieve $ u^*R$ is in $J(c)$. This is exactly what a coverage is. 
\end{proof}

\begin{remark}
    For the same reason, one can easily see why a general coverage (not necessarily Grothendieck) is a coalgebra structure for the modified version of the cofree site comonad $ \mathbb{S}_0$ described at \cref{more general version}.
\end{remark}

\begin{proposition}\label{inequality}
    For any Grothendieck coverage $ (\mathcal{C},J)$, we have at each $c$ of $\mathcal{C}$ an inequality
    \[ \mathcal{F}_{(c,J(c))} \leq \lext_J[J(c)] \]
    In other word, any Grothendieck coverage defines a structure of normal lax coalgebra for the comonad $(\mathcal{S}, \pi, \delta)$.
\end{proposition}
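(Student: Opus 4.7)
The plan is to unfold the two filters as upclosures of generating families of sieves over $(c,J(c))$ in $\mathbb{S}\mathcal{C}$ and reduce the inequality $\mathcal{F}_{(c,J(c))}\leq \lext_J[J(c)]$ to a single sieve inclusion that turns out to be the pullback-stability axiom for $J$ in disguise. The generators of $\mathcal{F}_{(c,J(c))}$ are the stabilizing sieves $\mathcal{R}_{(S,J(c))}$ for $S\in J(c)$, while those of $\lext_J[J(c)]$ are the image sieves $\lext_J(R)$ for $R\in J(c)$; since both filters are upclosed, it suffices to prove that for each $S\in J(c)$ one has an inclusion of sieves
\[ \lext_J(S)\;\subseteq\;\mathcal{R}_{(S,J(c))}. \]

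Second, I would verify this inclusion directly from the definitions. A member of $\lext_J(S)$ is an arrow $v:(d,G)\to(c,J(c))$ in $\mathbb{S}\mathcal{C}$ admitting a factorization $v = u\circ w$ with $u\in S$ and $w:(d,G)\to(c',J(c'))$ a morphism in $\mathbb{S}\mathcal{C}$, the latter condition amounting to $J(c')\leq (w^*)^{-1}(G)$. A member of $\mathcal{R}_{(S,J(c))}$, on the other hand, is an arrow factoring as $v = u\circ w'$ through $u:(c',u^*[J(c)])\to(c,J(c))$, which only demands the weaker condition $u^*[J(c)]\leq (w'^*)^{-1}(G)$ on the underlying $w' = w$. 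Now the pullback-stability axiom for $J$ is exactly the containment $u^*[J(c)]\subseteq J(c')$ of subposets of $\Sub_{\widehat{\mathcal{C}}}\hirayo_{c'}$, so the stronger condition automatically entails the weaker one with the same underlying $w$; this yields the claimed sieve inclusion, and upclosure of $\lext_J[J(c)]$ then promotes it to the desired filter inequality $\mathcal{F}_{(c,J(c))}\leq \lext_J[J(c)]$.

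The normal lax coalgebra interpretation follows formally. Both $\delta_\mathcal{C}J(c) = (c,J(c),\mathcal{F}_{(c,J(c))})$ and $\mathbb{S}J\cdot J(c) = ((c,J(c)),\lext_J[J(c)])$ project via $\pi_{\mathbb{S}\mathcal{C}}$ to the same object $(c,J(c))$, and a morphism over the identity of $(c,J(c))$ in $\mathbb{S}\mathbb{S}\mathcal{C}$ between them is precisely the datum of our inequality: this defines the 2-cell $\lambda_c$ of the normal lax coalgebra structure. Naturality in $c$ and the coherence with $\pi$ and $\delta$ reduce to inequalities in posetal fibers and so are automatic, as flagged in the preceding remark. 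The only potential obstacle is bookkeeping of which category each filter lives over and in which direction the various $\leq$ point; once one observes that enlarging the filter on the domain of a morphism in $\mathbb{S}\mathcal{C}$ \emph{weakens} the morphism condition, the whole proposition reduces to a direct application of the coverage axiom.
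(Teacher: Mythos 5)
Your proof is correct and follows essentially the same route as the paper's: both reduce the filter inequality to the single sieve inclusion $\lext_J(S)\subseteq\mathcal{R}_{(S,J(c))}$ for each $S\in J(c)$, and both obtain it from pullback-stability, which says $u^*[J(c)]\leq J(c')$ and hence lets every factorization through $J(u)=u:(c',J(c'))\rightarrow(c,J(c))$ be refactored through the stabilized map $u:(c',u^*[J(c)])\rightarrow(c,J(c))$. The concluding remarks on the posetal fibers making the coherence automatic likewise match the paper's treatment.
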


\begin{proof}
    If $J$ is a coverage, then for any $ u : b \rightarrow c$ in $\mathcal{C}$, one has $ u^*[J(c)] \leq J(b)$ as for any $ S \in J(c)$, the pullback sieve $ u^*S \in J(b)$, and this exactly says that for any $u \in S$, one has a factorization in $\mathbb{S}\mathcal{C}$
\[\begin{tikzcd}
	{(b, J(b))} & {(c,J(c))} \\
	{(b, u^*[J(c)])}
	\arrow["{u }", from=1-1, to=1-2]
	\arrow[dashed, from=1-1, to=2-1]
	\arrow["{u \in S}"', from=2-1, to=1-2]
\end{tikzcd}\]
    This means that the sieve $ \lext_J(S)$ is contained in the stabilizing sieve $ \mathcal{R}_{(S,J(c))}$, which hence must be in the generated filter $ \lext_J[J(c)]$. Hence for any $S \in J(c)$, one has $ \mathcal{R}_{(S,J(c))} \in \lext_J[J(c)]$, and as $\mathcal{F}_{(c,J(c))}$ is generated from those stabilizing filters, one has the desired inequality.\end{proof} 

\begin{division}
    The missing coherence condition in \cref{sites as coalg for the endofunctor} is the comultiplication condition that the following square commutes strictly
\[\begin{tikzcd}
	{\mathcal{C}} & {\mathbb{S}\mathcal{C}} \\
	{\mathbb{S}\mathcal{C}} & {\mathbb{S}\mathbb{S}\mathcal{C}}
	\arrow["J", from=1-1, to=1-2]
	\arrow["J"', from=1-1, to=2-1]
	\arrow["{\delta_\mathcal{C}}", from=1-2, to=2-2]
	\arrow["{\mathbb{S}J}"', from=2-1, to=2-2]
\end{tikzcd}\]
which amounts to having at any $c$ in $\mathcal{C}$ an equality 
\[   \mathcal{F}_{(c,J(c))} =  \lext_J[J(c)] \]
We saw at \cref{inequality} that the left to right inequality is always true for a coverage; we show here that the converse inequality is related to transitivity. Asking that $ \lext_J[J(c)] \leq \mathcal{F}_{(c,J(c))}$ amounts to asking that for any $R \in J(c)$, the sieve $ \lext_J(R)$ contains a generating sieve of $ \mathcal{F}_{(c,J(c))}$, that is, there is a sieve $S \in J(c)$ such that $ \mathcal{R}_{(S,J(c)) \leq } \lext_J(R) $; hence for such a sieve $S$ one will have that for any $ v : d \rightarrow c $ in $S$, there will be a $u : b \rightarrow c$ in $R$ and a factorization $ v = uw$ such that one has a factorization in $ \mathbb{S}\mathcal{C}$
\[\begin{tikzcd}
	{(d,v^*[J(c)])} & {(c,J(c))} \\
	{(b,J(b))}
	\arrow["{v \in S}", from=1-1, to=1-2]
	\arrow["w"', from=1-1, to=2-1]
	\arrow["{u \in R}"', from=2-1, to=1-2]
\end{tikzcd}\]
expressing that $ w^*[J(b)] \leq v^*[J(c)]$ on $d$. This later condition says that for any $T \in J(b)$, $w^*T \in v^*[J(c)]$, which is to say that there is some $Q \in J(c)$ such that $ v^*Q \leq w^*T$, that is, that for any $f $ on $d $ such that $vf$ factorizes through some $f' \in Q$, there is some $g $ such that $wy$ factorizes through some $g' \in T$ and a factorization as below
\[\begin{tikzcd}[sep=large]
	& {d'} & {c'} \\
	{d''} & d & c \\
	{b'} & b
	\arrow["x", from=1-2, to=1-3]
	\arrow["{\exists z}"', dashed, from=1-2, to=2-1]
	\arrow["f"{description}, from=1-2, to=2-2]
	\arrow["{f' \in Q}", from=1-3, to=2-3]
	\arrow["g"{description}, from=2-1, to=2-2]
	\arrow["y"', from=2-1, to=3-1]
	\arrow["{v \in S}"{description}, from=2-2, to=2-3]
	\arrow["w"{description}, from=2-2, to=3-2]
	\arrow["{g' \in T}"', from=3-1, to=3-2]
	\arrow["{u \in R}"', from=3-2, to=2-3]
\end{tikzcd}\]

  The condition we obtain by unfolding the coherence condition can itself be seen as a local form of transitivity: it says that for any $J$-coviering sieve $R$ there is another test $J$-covering sieve $S$ such that for any choice of $J$-covering sieves on domains of maps in $R$, the composite sieve will ``look" covering locally in $S$. This condition is rather strong and does not seems to be satisfied by Grothendieck coverages nor topologies, as it forces somewhat the filter to be related in a very tight manner through pullback. 
\end{division}

At the level of morphisms and comorphisms however there is a good correspondence with morphisms of coalgebras, without complications arising from the comultiplication:

\begin{proposition}
   Let $(\mathcal{C},J)$, $(\mathcal{D}, K)$ be coalgebras for $(\mathbb{S},\pi, \delta)$ and $ f: \mathcal{C} \rightarrow \mathcal{D}$ a functor: \begin{itemize}
        \item $f$ supports a lax morphism of coalgebras if and only if it is cover preserving;
        \item $f$ supports a colax morphism of coalgebras if and only if it is cover reflecting.
    \end{itemize}
   This remains true if $(\mathcal{C},J)$, $(\mathcal{D},K)$ are coalgebras for the underlying copointed endofunctor $(\mathbb{S},\pi)$.
\end{proposition}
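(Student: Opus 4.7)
The key structural fact I would exploit is that the counit $\pi_\mathcal{D} : \mathbb{S}\mathcal{D} \to \mathcal{D}$ has posetal fibres: a morphism in $\mathbb{S}\mathcal{D}$ is uniquely determined by its image under $\pi_\mathcal{D}$ together with a filter inequality, and the same holds for the iterated projections. Consequently, any natural transformation satisfying the counit coherence $\pi_\mathcal{D} \ast \phi = 1_f$ is rigidly prescribed component-wise; only existence is in question, and the naturality and comonad coherences will follow for free.

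For the lax case, I would unfold the definition: a lax coalgebra-morphism structure on $f$ is a natural transformation $\phi : K \circ f \Rightarrow \mathbb{S}f \circ J$ with $\pi_\mathcal{D} \ast \phi = 1_f$. The coherence forces $\phi_c$ to lift $\mathrm{id}_{f(c)}$ to a morphism $(f(c), K(f(c))) \to (f(c), \lext_f[J(c)])$ in $\mathbb{S}\mathcal{D}$, which by the very definition of $\mathbb{S}\mathcal{D}$ exists if and only if $\lext_f[J(c)] \leq K(f(c))$. Since $\lext_f[J(c)]$ is the filter generated by $\{\lext_f(S) \mid S \in J(c)\}$ and $K(f(c))$ is up-closed, this reduces to $\lext_f(S) \in K(f(c))$ for every $S \in J(c)$, which is precisely cover-preservation. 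Naturality of $\phi$ is automatic: any two parallel morphisms in $\mathbb{S}\mathcal{D}$ lifting the same arrow of $\mathcal{D}$ coincide.

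The colax case is dual: a colax structure is a $\phi : \mathbb{S}f \circ J \Rightarrow K \circ f$ whose components are identity-lifts $(f(c), \lext_f[J(c)]) \to (f(c), K(f(c)))$, existing iff $K(f(c)) \leq \lext_f[J(c)]$, i.e. every $S \in K(f(c))$ contains some $\lext_f(R)$ with $R \in J(c)$. I would then check this is equivalent to cover-lifting, namely $f^{-1}(S) \in J(c)$ for all $S \in K(f(c))$: if $f^{-1}(S) \in J(c)$, the sieve $R = f^{-1}(S)$ satisfies $\lext_f(R) \leq S$ directly from the definitions; conversely, if $\lext_f(R) \leq S$ for some $R \in J(c)$, then every $u \in R$ has $f(u) \in S$, so $R \leq f^{-1}(S)$, and up-closure of the filter $J(c)$ forces $f^{-1}(S) \in J(c)$. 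Finally, for the extension to the full comonad setting, the additional coherence of a (co)lax coalgebra morphism with respect to $\delta$ equates two 2-cells in $\mathbb{S}\mathbb{S}\mathcal{D}$ both projecting to $1_f$ in $\mathcal{D}$, hence automatic by posetality of the iterated fibres; the characterization is therefore unchanged. No real obstacle arises; the only mildly delicate step is the translation between cover-lifting and the filter inequality $K(f(c)) \leq \lext_f[J(c)]$, everything else being formal.
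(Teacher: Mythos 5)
Your proof is correct and follows essentially the same route as the paper: use the counit coherence and the posetal fibres of $\pi$ to reduce the existence of the (co)lax structure cell to the fibrewise inequalities $\lext_f[J(c)] \leq K(f(c))$ (resp. $K(f(c)) \leq \lext_f[J(c)]$), identify these with cover-preservation and cover-lifting, and observe that naturality and the comultiplication coherence are automatic because all the relevant 2-cells are fibrewise inequalities. Your explicit translation between the containment-of-$\lext_f(R)$ formulation and the $f^{-1}(S)\in J(c)$ formulation of cover-lifting is a small addition the paper leaves to an earlier remark, but it is not a different argument.
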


\begin{proof}
For the first item, suppose that one has a 2-cell 
\[\begin{tikzcd}
	{\mathcal{C}} & {\mathcal{D}} \\
	{\mathbb{S}\mathcal{C}} & {\mathbb{S}\mathcal{D}}
	\arrow["f", from=1-1, to=1-2]
	\arrow[""{name=0, anchor=center, inner sep=0}, "J"', from=1-1, to=2-1]
	\arrow[""{name=1, anchor=center, inner sep=0}, "K", from=1-2, to=2-2]
	\arrow["{\mathbb{S}f}"', from=2-1, to=2-2]
	\arrow["\phi"', shorten <=6pt, shorten >=6pt, Rightarrow, from=1, to=0]
\end{tikzcd}\]
This consists for each $c$ in $\mathcal{C}$ of a morphism $(f(c), K(f(c))) \rightarrow (f(c), \lext_f[J(c)])$; moreover, from the axiom of retraction
\[\begin{tikzcd}
	{\mathcal{C}} & {\mathcal{D}} \\
	{\mathbb{S}\mathcal{C}} & {\mathbb{S}\mathcal{D}} \\
	{\mathcal{C}} & {\mathcal{D}}
	\arrow["f", from=1-1, to=1-2]
	\arrow[""{name=0, anchor=center, inner sep=0}, "J"', from=1-1, to=2-1]
	\arrow[""{name=1, anchor=center, inner sep=0}, "K", from=1-2, to=2-2]
	\arrow["{\mathbb{S}f}"{description}, from=2-1, to=2-2]
	\arrow["{\pi_\mathcal{C}}"', from=2-1, to=3-1]
	\arrow["{\pi_\mathcal{D}}", from=2-2, to=3-2]
	\arrow["f"', from=3-1, to=3-2]
	\arrow["\phi"', shorten <=6pt, shorten >=6pt, Rightarrow, from=1, to=0]
\end{tikzcd} = 
\begin{tikzcd}
	{\mathcal{C}} & {\mathcal{D}} \\
	{\mathcal{C}} & {\mathcal{D}}
	\arrow["f", from=1-1, to=1-2]
	\arrow[""{name=0, anchor=center, inner sep=0}, equals, from=1-1, to=2-1]
	\arrow[""{name=1, anchor=center, inner sep=0}, equals, from=1-2, to=2-2]
	\arrow["f"', from=2-1, to=2-2]
	\arrow["\begin{array}{c} 1_f \\ = \end{array}"{description}, draw=none, from=0, to=1]
\end{tikzcd}\]
we know that the underlying morphism $\pi_{\mathcal{D}}(\phi_c)$ in $ \mathcal{D}$ is the identity of $ f(c)$, so it has to live in the fiber $\mathbb{F}_{\mathcal{D}}(f(c))$; but as $ \mathbb{S}\mathcal{D}$ has posetal fibers, the existence of such a 2-cell amounts to an inequality between subobjects
\[ \lext_f[J(c)] \leq K(f(c))  \]
which means that if a sieve $ R$ contains the image of a sieve of $J(c)$, then it is in $K(f(c))$. Hence the sieve generated from the arrows of the form $ f(u): f(c') \rightarrow f(c)$ for $u \in S$ with $S$ in $J(c)$ is in particular in $K(f(c))$, which makes $ f$ a morphism of sites $ (\mathcal{C},J) \rightarrow (\mathcal{D},K)$.  

For the second item, suppose that one has a 2-cell 
\[\begin{tikzcd}
	{\mathcal{C}} & {\mathcal{D}} \\
	{\mathbb{S}\mathcal{C}} & {\mathbb{S}\mathcal{D}}
	\arrow["f", from=1-1, to=1-2]
	\arrow[""{name=0, anchor=center, inner sep=0}, "J"', from=1-1, to=2-1]
	\arrow[""{name=1, anchor=center, inner sep=0}, "K", from=1-2, to=2-2]
	\arrow["{\mathbb{S}f}"', from=2-1, to=2-2]
	\arrow["\phi", shorten <=6pt, shorten >=6pt, Rightarrow, from=0, to=1]
\end{tikzcd}\]
This consists for each $c$ in $\mathcal{C}$ of a morphism $\phi_c :(f(c), \lext_f[J(c)]) \Rightarrow  (f(c), K(f(c))) $, whose underlying map in $\mathcal{D}$ is the identity of $f(c)$ by the same argument as above: but as $ \mathbb{S}\mathcal{D}$ has posetal fibers, the existence of such a 2-cell amounts to an inequality between subobjects
\[ K(f(c)) \leq \lext_f[J(c)] \]
which means any sieve $ R$ in $K(f(c))$ contains a sieve of the form $ \lext_fS$ for some $ S $ in $J(c)$, so that there is a sieve $ S$ on $c$ such that the sieve generated from the $f(u)$ with $u$ in $S$ is contained in $R$.

To finish, observe that if $ (\mathcal{C}, J)$, $(\mathcal{D},K)$ are coalgebras for the whole comonad structure of $ \mathbb{S}$, that is, satisfy the comultiplication axiom, then a structure on $f$ of lax or colax morphism for the copointed endofunctor will lift automatically to a structure of lax or colax morphism for the whole comonad: indeed, the counit condition ensures that the structure map of a lax or a colax morphism is an inequality. As there is at most one inequality, one forcibly have the coherence 
\[\begin{tikzcd}
	{\mathcal{C}} && {\mathcal{D}} \\
	{\mathbb{S}\mathcal{C}} & {\mathbb{S}\mathcal{C}} && {\mathbb{S}\mathcal{D}} \\
	& {\mathbb{S}\mathbb{S}\mathcal{C}} && {\mathbb{S}\mathbb{S}\mathcal{D}}
	\arrow["f", from=1-1, to=1-3]
	\arrow["J"', from=1-1, to=2-1]
	\arrow[""{name=0, anchor=center, inner sep=0}, "J"{description}, from=1-1, to=2-2]
	\arrow[""{name=1, anchor=center, inner sep=0}, "K", from=1-3, to=2-4]
	\arrow["{\delta_\mathcal{C}}"', from=2-1, to=3-2]
	\arrow["{\mathbb{S}f}"{description}, from=2-2, to=2-4]
	\arrow[""{name=2, anchor=center, inner sep=0}, "{\mathbb{S}J}"{description}, from=2-2, to=3-2]
	\arrow[""{name=3, anchor=center, inner sep=0}, "{\mathbb{S}K}", from=2-4, to=3-4]
	\arrow["{\mathbb{S}\mathbb{S}f}"', from=3-2, to=3-4]
	\arrow["\phi", shorten <=13pt, shorten >=13pt, Rightarrow, from=0, to=1]
	\arrow["{\mathbb{S}\phi}"', shorten <=13pt, shorten >=13pt, Rightarrow, from=2, to=3]
\end{tikzcd} = 
\begin{tikzcd}
	{\mathcal{C}} && {\mathcal{D}} \\
	{\mathbb{S}\mathcal{C}} && {\mathbb{S}\mathcal{D}} & {\mathbb{S}\mathcal{D}} \\
	& {\mathbb{S}\mathbb{S}\mathcal{C}} && {\mathbb{S}\mathbb{S}\mathcal{D}}
	\arrow["f", from=1-1, to=1-3]
	\arrow[""{name=0, anchor=center, inner sep=0}, "J"', from=1-1, to=2-1]
	\arrow[""{name=1, anchor=center, inner sep=0}, "K"{description}, from=1-3, to=2-3]
	\arrow["K", from=1-3, to=2-4]
	\arrow["{\mathbb{S}f}"{description}, from=2-1, to=2-3]
	\arrow["{\delta_\mathcal{C}}"', from=2-1, to=3-2]
	\arrow["{\delta_\mathcal{D}}"{description}, from=2-3, to=3-4]
	\arrow["{\mathbb{S}K}"{description}, from=2-4, to=3-4]
	\arrow["{\mathbb{S}\mathbb{S}f}"', from=3-2, to=3-4]
	\arrow["\phi", shorten <=13pt, shorten >=13pt, Rightarrow, from=0, to=1]
\end{tikzcd}\]
because the cell $\phi$ is a fiberwise inequality as well as its image $ \mathbb{S}\phi$, so both of the pasting above are fiberwise inequality, hence are equal. 
\end{proof}

\begin{remark}
Observe that a structure of lax or colax morphism on a functor, if it exists, has to be unique; however the comonad $\mathbb{S}$ is far from being lax-idempotent because such a structure of lax morphism, though unique, exists only for some functors rather than being freely constructible for any functor. Accordingly, there are many possible coverages on a same category $\mathcal{C}$, which are as many possible coalgebra structures, none of them is adjoint to the projection.
\end{remark}

\begin{remark}
   {We only have a double category whose horizontal morphisms are \emph{cover preserving} functors rather than morphisms of sites, for the structure involved there will have no special relation with flatness. Though this does not change anything for vertical morphisms as comorphisms are not required to be flat, the resulting double category of coalgebra has more horizontal 1-cells than $ \Site^\natural$ and the sheafification double functor does not properly extends to it as flatness is crucial to restrict between sheaf topoi. }
\end{remark}

\begin{proposition}
    The 2-category $ \mathbb{S}\hy\textbf{coAlg}_\ps$ does not have a terminal object. Hence no site is terminal relative to morphisms that are both morphisms and comorphisms of sites. 
\end{proposition}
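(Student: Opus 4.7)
The plan is to derive a contradiction by forcing any hypothetical terminal $(T,M)$ to carry only the maximal sieve as a cover at every object, and then to exhibit a site that cannot admit a pseudo morphism into such a target.

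First, since a terminal $(T,M)$ must receive a pseudo morphism from the site $(\mathbf{1},\{\hirayo_*\})$ — the one-object category with only the maximal sieve covering — pick such a morphism $f_0$ and set $t_0 = f_0(*)$. Cover-reflection applied to the unique object of $\mathbf{1}$ yields $M(t_0) \subseteq \lext_{f_0}[\{\hirayo_*\}] = \{\hirayo_{t_0}\}$, which combined with the maximality axiom forces $M(t_0) = \{\hirayo_{t_0}\}$.

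Next, I would verify that the constant endofunctor $c_{t_0} : T \to T$ is itself a pseudo morphism $(T,M) \to (T,M)$: cover-preservation holds because for any sieve $S$ on $x$ the image $\lext_{c_{t_0}}(S)$ is generated by $\{1_{t_0}\}$ and hence equals $\hirayo_{t_0} \in M(t_0)$; cover-reflection holds because the only $M$-cover of $t_0$ is $\hirayo_{t_0}$, whose preimage $c_{t_0}^{-1}(\hirayo_{t_0})$ is the maximal sieve on $x$. By terminality the hom-category $\mathbb{S}\hy\textbf{coAlg}_\ps((T,M),(T,M))$ is contractible, so $c_{t_0}$ is uniquely naturally isomorphic to $\mathrm{id}_T$; the components of this natural isomorphism supply isomorphisms $t_0 \cong x$ for every $x \in T$. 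Transporting $M(t_0) = \{\hirayo_{t_0}\}$ along these isomorphisms using stability of coverages, one concludes $M(x) = \{\hirayo_x\}$ for every $x \in T$.

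Finally, I would invoke the site $(\mathbf{1},\{\hirayo_*,\emptyset\})$, which is a bona fide Grothendieck coverage (maximality holds by construction and stability is vacuous on $\mathbf{1}$). A pseudo morphism from this site to $(T,M)$ would select some $t \in T$ and, by cover-preservation applied to the empty sieve on $*$, force $\emptyset \in M(t)$; this directly contradicts the previous paragraph. Hence no such terminal $(T,M)$ can exist, and the second sentence of the proposition follows because morphisms that are simultaneously morphisms and comorphisms of sites constitute a subclass of the pseudo morphisms of coalgebras.

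The main technical step to verify carefully is that the constant functor $c_{t_0}$ genuinely lifts to a pseudo morphism, since the whole argument hinges on invoking terminality for this endofunctor; this lift relies crucially on the initial conclusion $M(t_0) = \{\hirayo_{t_0}\}$. Once this is secured, the contradiction collapses to the observation that the two mutually incomparable one-object sites $(\mathbf{1},\{\hirayo_*\})$ and $(\mathbf{1},\{\hirayo_*,\emptyset\})$ cannot both accept pseudo morphisms into a common target whose coverage is everywhere trivial.
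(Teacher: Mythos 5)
Your opening and closing steps are sound: terminality does force, via a pseudo morphism from $(\mathbf{1},\{\hirayo_*\})$, an object $t_0$ with $M(t_0)=\{\hirayo_{t_0}\}$, and a pseudo morphism from $(\mathbf{1},\{\hirayo_*,\emptyset\})$ does force every sieve, in particular the empty one, to be $M$-covering on its image point $t_1$. The genuine gap is the middle step, exactly the one you flag as delicate: the constant functor $c_{t_0}$ is in general \emph{not} cover-preserving, because your computation of $\lext_{c_{t_0}}(S)$ only works for non-empty $S$; for $S=\emptyset$ one has $\lext_{c_{t_0}}(\emptyset)=\emptyset$, which does not lie in $M(t_0)=\{\hirayo_{t_0}\}$. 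You cannot dismiss this case: on the contrary, your own final step shows that a putative terminal $(T,M)$ \emph{must} contain an object $t_1$ with $\emptyset\in M(t_1)$, so under the standing hypothesis $c_{t_0}$ provably fails the lax (cover-preservation) condition at $x=t_1$. (The constant functor at $t_1$ fails dually, since cover-reflection at $x=t_0$ would require $\emptyset\in M(t_0)$.) Hence the appeal to contractibility of the endo-hom is unavailable, and what survives of your argument --- one object whose only cover is maximal and a possibly different object where every sieve covers --- is not yet a contradiction, since a coverage may vary from object to object; the bridge identifying these two constraints on a single object is precisely what is missing.

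For comparison, the paper takes the candidate terminal to be the one-object category $\mathbf{1}$ and enumerates its only two possible structures, which are exactly your two test sites: $(\mathbf{1},\{\hirayo_*\})$ cannot receive a morphism of sites from any site having an empty cover (since $\lext_!(\emptyset)=\emptyset$ is not covering there), and $(\mathbf{1},\{\hirayo_*,\emptyset\})$ cannot receive a comorphism from any site having no empty covers. Your idea of running the argument for an arbitrary underlying category $T$ is more ambitious (the paper implicitly reduces to $\mathbf{1}$), but to make it work you would need either a different device to transport the constraint at $t_0$ to $t_1$ (the constant-functor trick cannot do it), or a preliminary argument that a terminal coalgebra would have to have underlying category $\mathbf{1}$, after which the paper's two-case analysis finishes the proof.
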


\begin{proof}
    This means that the terminal category with one object 1 cannot bear a structure of site that make it terminal relatively to functor that are \emph{both} morphisms and comorphisms. The category $ \mathbb{S}1$ is the two-object lattice $ \mathbb{F}_1(*) = 2$ for there are two subobjects of $ \hirayo_*$, namely the identity singleton $ \{ \id_* \}$ and the empty sieve. There are two filters in it : the singleton $ \{ 1 \}$ and the maximal filter 2. Hence there are at most two possible site structures on $ 1$: the topology $ T$ where only the singleton $ \{ \id_* \}$ is covering, and the topology $T'$ where both the singleton and the empty sieve are covering. In both case there is an obstruction to making either $(1, T)$ or $(1,T')$ terminals:\begin{itemize}
        \item suppose that $(1,T)$ is the terminal site; for any site $ (C,J)$, the unique functor $ ! : \mathcal{C} \rightarrow 1$ must bear a structure of pseudomorphism, hence be both a morphism and a comorphism; but suppose that there is an object $ c$ in $\mathcal{C}$ for which the empty sieve $ \emptyset$ is $J$-covering on $c$; then for $!$ is a morphism of site, the sieve $ \lext_f(\emptyset)$ is $T$-covering, so it cannot be empty as the singleton is the unique covering sieve; but $ \lext_f(\emptyset) $ is empty, because $\lext_f $ is a left adjoint and hence preserve bottom element (also, this is the sieve of arrows for which there exist an arrow in $\emptyset$ such that there factorize through its image, and by definition there is not such arrow), so $!$ cannot be a morphism of site. Hence $ (1,T)$ is not terminal for pseudomorphisms.
        \item suppose that $ (1,T')$ is the terminal site; again for any site $ ! : \mathcal{C} \rightarrow 1$ must bear then a structure of pseudomorphism, so be both a morphism and a comorphism; for any $c$ in $\mathcal{C}$, one has $ !(c)=1$ is covered by the empty sieve; but for $!$ must be cover lifting, there must be a $J$-cover for $c$ whose image under $!$ is contained in the empty set, so this cover must be empty: but there are many sites where no object is covered by the empty sieve, and hence $(1,T')$ cannot be terminal either for pseudomorphisms
    \end{itemize}
\end{proof}

\begin{remark}
    This obstruction is related to Lambek theorem: the terminal coalgebra, if it existed, would have an equivalence as structure map, and we saw that this is not the case for the terminal category. 
\end{remark}

\subsection{Indexed version}

Defining a suited comultiplication structure for $\mathbb{S}$ happens to be problematic, as sites are only lax coalgebras for the only plausible choice of $\delta$. This problem vanishes if one considers the following indexed version. 

\begin{division}
    Consider the strict slice $ \Cat/\mathcal{C}$ of functors over $\mathcal{C}$ together with strict triangle between them. For a functor $ p : \mathcal{D} \rightarrow \mathcal{C}$, we say that a set $F$ of sieves on $d$ in $\mathcal{D}$ is $p$-saturated if for any two sieves $R,S$ such that $ R \in F$ and $\lext_p(R) = \lext_p(S)$ as sieves on $p(d)$ in $\mathcal{C}$ then $S \in F$: in other words $F$ does not distinguishes between sieves that have the same projection in $\mathcal{C}$. Equivalently those are filters $F$ such that $ F = \rest_{p}\lext_{p}$ in the sense that if $R$ has $ \lext_{p}(R) = \langle \{ p(u) : p(d') \rightarrow p(d) \mid u \in R \} \langle $ is in $ \lext_p(F)$ then $ R \in F$. 
    \end{division}

    \begin{division}
    For any set of sieves $F$ on $d$ one can take its $p$-saturation $ \overline{F}$ containing all $S$ such that $ p(S) = p(R)$ for some $R \in F$. Then define $ \mathbb{T}(\mathcal{D},p)$ as having\begin{itemize}
        \item as objects pairs $(d,F)$ with $d $ an object of $\mathcal{D}$ and $ F$ a $p$-saturated sieve on $d$;
        \item as morphisms $(d',F') \rightarrow (d,F)$ any $u : d' \rightarrow d$ in $\mathcal{D}$ such that $u^*F \leq F'$. 
    \end{itemize}
    Then we have a projection $ \pi_{(\mathcal{D},p)} : \mathbb{T}(\mathcal{D},p) \rightarrow \mathcal{D}$ sending $(d,F)$ to $d$: this functor is a fibration, as for any $ u:d' \rightarrow d$ with $F$ on $d$, the morphism $u : (d,\overline{u^*F}) \rightarrow F$ is cartesian. 
    We can then define the following composite together with its composition 2-cell to exhibit $\pi_{(\mathcal{D},p)}$ as a 1-cell of $\Cat/\mathcal{C}$
\[\begin{tikzcd}
	{\mathbb{T}{(\mathcal{D},p)}} && {\mathcal{D}} \\
	& {\mathcal{C}}
	\arrow["{\pi_{(\mathcal{D},p)}}", from=1-1, to=1-3]
	\arrow["{p\pi_{(\mathcal{D},p)}}"', from=1-1, to=2-2]
	\arrow["p", from=1-3, to=2-2]
\end{tikzcd}\]
\end{division}

\begin{division}
    Then for $ m : (\mathcal{D},p ) \rightarrow (\mathcal{B},q)$ a strict triangle over $\mathcal{C}$, then define $ \mathbb{T}(m) : \mathbb{T}(\mathcal{D},p ) \rightarrow \mathbb{T}(\mathcal{B},q)$ sending $ (d,F)$ to the pair $ (m(d), \overline{\lext_m[F]})$. 
    We have a copointed 2-endofunctor $ \mathbb{T}_\mathcal{C}: \Cat/\mathcal{C} \rightarrow \Cat/\mathcal{C}$ with $ \pi$ as the copointer.

    Then for each $(\mathcal{D},p )$ the category $ \mathbb{T}\mathbb{T}(\mathcal{D},p )$ contains all triples $(d,F,\mathcal{F})$ where $ F$ is a $p$-saturated filter of sieves on $d$, and $ \mathcal{F}$ is a $p\pi_{(\mathcal{D},p )}$-saturated filter of sieves in $\mathbb{T}(\mathcal{D},p )$ over $ (d,F)$. 
    
    Then define a comultiplication as follows: for any $(\mathcal{D},p )$ define $ \mathcal{F}_{(c,F)}$ as the saturated filter generated from sieves of the form $ \mathcal{R}_{S,F}$. Then observe that equivalently, by saturation, $ \mathcal{F}_{(c,F)}$ contains all sieves generated from families of the form $\{ u : (d',F') \rightarrow (d,F) \mid u^*F \leq F', u \in R \}$ for $R \in F$, that are, sieves $ \mathcal{R}$ such that $ \lext_{\pi_{(\mathcal{D},p )}}(\mathcal{R}) \in F$. Indeed, for such a $ \mathcal{R}$, one has then $ p \pi_{(\mathcal{D},p )}(\mathcal{R}) \in \lext_p[F] = \lext_{p}\lext_{\pi_{(\mathcal{D},p)}}(\mathcal{F}_{(c,F)})$, so $ \mathcal{R}$ is in $ \mathcal{F}_{(c,F)}$ by saturation.

    Then we have a functor $ \delta_{(\mathcal{D},p)} : \mathbb{T}(\mathcal{D},p) \rightarrow \mathbb{T}\mathbb{T}(\mathcal{D},p)$ sending $ (c,F)$ to $ (c,F, \mathcal{F}_{(c,F)})$. This defines a strict triangle 
\[\begin{tikzcd}
	{\mathbb{T}(\mathcal{D},p)} && {\mathbb{T}\mathbb{T}(\mathcal{D},p)} \\
	& {\mathcal{C}}
	\arrow["{ \delta_{(\mathcal{D},p)}}", from=1-1, to=1-3]
	\arrow["{p\pi_{(\mathcal{D},p)}}"', from=1-1, to=2-2]
	\arrow["{p\pi_{(\mathcal{D},p)}\pi_{\mathbb{T}(\mathcal{D},p)}}", from=1-3, to=2-2]
\end{tikzcd}\]
\end{division}

\begin{definition}
    For $ p : \mathcal{D} \rightarrow \mathcal{C}$ a fibration, a \emph{$p$-coverage} on $\mathcal{D}$ will be a coverage such that for each $d$ in $\mathcal{D}$, the filter $J(d)$ is $p$-saturated.
    
\end{definition}

\begin{proposition}
    For $p : \mathcal{D} \rightarrow\mathcal{C}$, a $ \mathbb{T}_\mathcal{C}$ coalgebra structure on $(\mathcal{D},p)$ is the same as a $p$-coverage on $\mathcal{D}$.
\end{proposition}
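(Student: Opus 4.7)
The plan is to follow the strategy of \cref{sites as coalg for the endofunctor}, reading off a coverage from the section data of a coalgebra, and then to exploit $p$-saturation to upgrade the lax comultiplication phenomenon of \cref{inequality} to a strict equality. First I would unpack a coalgebra structure $J : \mathcal{D} \to \mathbb{T}(\mathcal{D},p)$: being a section of $\pi_{(\mathcal{D},p)}$ in $\Cat/\mathcal{C}$ forces $J(d) = (d, J_d)$ with $J_d$ a $p$-saturated filter of sieves on $d$, which is precisely the data carried by an object of $\mathbb{T}(\mathcal{D},p)$; the triangle condition $p\pi_{(\mathcal{D},p)} \circ J = p$ is then automatic. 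Functoriality of $J$ imposes $u^*J_d \leq J_{d'}$ for each $u : d' \to d$, which is the stability axiom. So the underlying data of a coalgebra for the copointed endofunctor $(\mathbb{T}_\mathcal{C}, \pi)$ is exactly a $p$-coverage, and conversely any $p$-coverage reassembles into such a section.

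The genuine content of the statement lies in verifying that the comultiplication axiom $\delta_{(\mathcal{D},p)} \circ J = \mathbb{T}_\mathcal{C}(J) \circ J$ is automatic. Applied at $d$, both sides have first components $(d, J_d)$, so the axiom reduces to the equality of $p\pi_{(\mathcal{D},p)}$-saturated filters
\[ \mathcal{F}_{(d, J_d)} = \overline{\lext_J[J_d]}. \]
For the inclusion $\supseteq$, each generator $\lext_J(S)$ satisfies $\lext_{\pi_{(\mathcal{D},p)}}(\lext_J(S)) = S \in J_d$, so by the characterization of $\mathcal{F}_{(d, J_d)}$ recalled in the excerpt it lies in $\mathcal{F}_{(d, J_d)}$; being itself $p\pi$-saturated, this filter absorbs the whole saturated closure. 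For the converse, given $\mathcal{R} \in \mathcal{F}_{(d, J_d)}$, its projection $S := \lext_{\pi_{(\mathcal{D},p)}}(\mathcal{R})$ lies in $J_d$, and $\lext_J(S) \in \lext_J[J_d]$ shares the same image as $\mathcal{R}$ under $\lext_{p\pi_{(\mathcal{D},p)}}$, namely $\lext_p(S)$, so $p\pi$-saturation places $\mathcal{R}$ in $\overline{\lext_J[J_d]}$.

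The main obstacle is precisely this equality of filters, and it is exactly where passing from $\mathbb{S}$ to the indexed variant $\mathbb{T}_\mathcal{C}$ pays off: $p\pi$-saturation identifies the sieves $\lext_J(S)$ with the larger stabilizing sieves $\mathcal{R}_{(S, J_d)}$—both project to $\lext_p(S)$ in $\mathcal{C}$—so the strict one-way inequality of \cref{inequality}, which was what prevented coverages from being genuine $\mathbb{S}$-coalgebras, is promoted here to an equality of $p\pi$-saturated filters. Everything else is routine bookkeeping about strict triangles in $\Cat/\mathcal{C}$.
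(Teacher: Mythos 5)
Your proof is correct and follows essentially the same route as the paper's: unpack the section of $\pi_{(\mathcal{D},p)}$ as a coverage by $p$-saturated filters, then reduce the comultiplication axiom to the equality $\mathcal{F}_{(d,J_d)} = \overline{\lext_J[J_d]}$ of saturated filters, proved by observing that $\lext_J(S)$ and the stabilizing sieve $\mathcal{R}_{(S,J_d)}$ have the same projection $\lext_p(S)$, so $p\pi$-saturation identifies them. The only cosmetic difference is that you re-derive the inclusion $\mathcal{F}_{(d,J_d)} \subseteq \overline{\lext_J[J_d]}$ directly from saturation, where the paper instead invokes its earlier lax-coalgebra inequality (\cref{inequality}); the substance is unchanged.
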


\begin{proof}
     For $p : \mathcal{D} \rightarrow\mathcal{C}$, a $ \mathbb{T}_\mathcal{C}$ coalgebra structure on $(\mathcal{D},p)$ is a strict triangle $ J : (\mathcal{D}, p) \rightarrow \mathbb{T}(\mathcal{D},p)$ satisfying the equations
\[\begin{tikzcd}
	{\mathcal{D}} & {\mathbb{T}(\mathcal{D},p)} & {\mathcal{D}} \\
	& {\mathcal{C}}
	\arrow["J", from=1-1, to=1-2]
	\arrow["p"', from=1-1, to=2-2]
	\arrow["{\pi_{(\mathcal{D},p)}}", from=1-2, to=1-3]
	\arrow["{p\pi_{(\mathcal{D},p)}}"{description}, from=1-2, to=2-2]
	\arrow["p", from=1-3, to=2-2]
\end{tikzcd} = 
\begin{tikzcd}
	{\mathcal{D}} && {\mathcal{D}} \\
	& {\mathcal{C}}
	\arrow[equals, from=1-1, to=1-3]
	\arrow["p"', from=1-1, to=2-2]
	\arrow["p", from=1-3, to=2-2]
\end{tikzcd}  \hskip1cm 
\begin{tikzcd}
	{(\mathcal{D},p)} & {\mathbb{T}(\mathcal{D},p)} \\
	{\mathbb{T}(\mathcal{D},p)} & {\mathbb{T}\mathbb{T}(\mathcal{D},p)}
	\arrow["J", from=1-1, to=1-2]
	\arrow[""{name=0, anchor=center, inner sep=0}, "J"', from=1-1, to=2-1]
	\arrow[""{name=1, anchor=center, inner sep=0}, "{\delta_{(\mathcal{D},p)}}", from=1-2, to=2-2]
	\arrow["{\mathbb{T}J}"', from=2-1, to=2-2]
	\arrow["{=}"{description}, draw=none, from=0, to=1]
\end{tikzcd}\]
Being a section of $ \pi_{(\mathcal{D},p)}$ implies to be also a $p$-local section of $p\pi_{(\mathcal{D},p)}$, but we also know that it amounts for $J$ to be a coverage such that $ J(d)$ is a $p$-saturated filter for each $d$. Conversely we see that any coverage on $\mathcal{D}$ that consists in $p$-saturated filters statisfies the first condition. Now let us show that the comultiplication condition entails from the saturation: for any $d$ in $\mathcal{D}$, we know already even before saturation that $\mathcal{F}_{(c,J(c)} \leq \lext_J[J(c)]$; if now one has $\mathcal{R} \in \lext_J[J(d)]$, there is $R \in J(d)$ such that $$\lext_{p\pi_{(\mathcal{D},p)}}(\mathcal{R}) = \lext_{p\pi_{(\mathcal{D},p)}}(\lext_{J}(R)) $$
But we always have that $$\lext_{p\pi_{(\mathcal{D},p)}}(\lext_{J}(R)) = \lext_{p\pi_{(\mathcal{D},p)}}(\mathcal{R}_{R,J(c)}) = \lext_p(R) $$
so $ \lext_J(R) \in \mathcal{F}_{(c,J(c))}$ and so does $\mathcal{R}$. Hence one has $ \lext_{J}[J(c)] = \mathcal{F}_{(c,J(c))}$.
\end{proof}

\begin{lemma}
    Any filter $ F$ on an object $c$ in $\mathcal{C}$ is $ 1_\mathcal{C}$-saturated. 
\end{lemma}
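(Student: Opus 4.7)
The proof is essentially a direct unfolding of definitions. The plan is to observe that for the identity functor $1_\mathcal{C}$, the left extension operation $\lext_{1_\mathcal{C}}$ acts as the identity on sieves: for any sieve $R \rightarrowtail \hirayo_c$, one has $\lext_{1_\mathcal{C}}(R) = R$, since this is the sieve of arrows factorizing through an image $1_\mathcal{C}(u) = u$ of some $u \in R$, which is just $R$ itself (by functoriality of $\lext_{(-)}$ and the fact that $\lext_{1_\mathcal{C}}$ is the identity functor on $\widehat{\mathcal{C}}$).

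Given this, the saturation condition becomes vacuous. Explicitly, suppose $F$ is a filter on $c$, let $R \in F$ and let $S$ be any sieve on $c$ with $\lext_{1_\mathcal{C}}(R) = \lext_{1_\mathcal{C}}(S)$. By the observation above, this equation reduces to $R = S$, so $S \in F$ trivially. Hence $F$ is $1_\mathcal{C}$-saturated, as required. There is no obstacle here; the lemma is a sanity check that the indexed construction $\mathbb{T}_\mathcal{C}$ recovers the absolute construction $\mathbb{S}$ when one specializes to $\mathcal{C}$ itself via $p = 1_\mathcal{C}$, since in that case every filter automatically satisfies the saturation constraint.
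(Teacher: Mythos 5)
Your proof is correct and is exactly the intended argument: since $\lext_{1_\mathcal{C}}$ is the identity (the sieve generated by the arrows $1_\mathcal{C}(u)=u$ for $u\in R$ is $R$ itself, $R$ being already a sieve), the saturation condition degenerates to a tautology. The paper omits the proof as immediate, so your unfolding of the definitions matches what was intended.
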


\begin{theorem}
    A coverage $ J$ on $\mathcal{C} $ is the same as a coalgebra structure for $ (\mathbb{T}, \pi, \delta)$ on $ 1_\mathcal{C}$. 
\end{theorem}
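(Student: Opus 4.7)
The plan is to specialize the preceding proposition to the object $(\mathcal{D}, p) = (\mathcal{C}, 1_\mathcal{C})$ of $\Cat/\mathcal{C}$ and then eliminate the saturation requirement using the lemma just established. The proposition identifies $\mathbb{T}_\mathcal{C}$-coalgebra structures on $(\mathcal{C}, 1_\mathcal{C})$ with $1_\mathcal{C}$-coverages on $\mathcal{C}$, that is, coverages $J$ such that each filter $J(c)$ is $1_\mathcal{C}$-saturated.

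The lemma then trivializes this saturation constraint at the base level: since $\lext_{1_\mathcal{C}}$ is the identity on sieves, two sieves agree on their $1_\mathcal{C}$-extensions iff they coincide, so the saturation condition is vacuous and every filter qualifies. Consequently the notion of $1_\mathcal{C}$-coverage collapses to the ordinary notion of coverage on $\mathcal{C}$, and this yields the claimed bijection between coverages and $\mathbb{T}_\mathcal{C}$-coalgebra structures on $1_\mathcal{C}$ once one applies the preceding proposition. In particular, the comultiplication condition for such a coalgebra structure, which was obstructed for the comonad $\mathbb{S}$ of the previous subsection, is automatically satisfied here because it was already verified in full generality by the preceding proposition for arbitrary $p$-coverages.

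The conceptual point worth emphasizing is that while the underlying category of $\mathbb{T}(\mathcal{C}, 1_\mathcal{C})$ coincides with the earlier construction $\mathbb{S}\mathcal{C}$ (by the lemma, no filter is discarded by the saturation requirement), the comultiplication provided by the $\mathbb{T}_\mathcal{C}$-structure is computed with respect to saturation along the fibration $\pi_{(\mathcal{C},1_\mathcal{C})}$, and this saturation step is precisely what absorbs the local transitivity obstruction that previously forced sites to be only normal lax coalgebras for $\mathbb{S}$. No substantive obstacle arises in the verification itself: both ingredients are in place, and the proof amounts to unwinding the definitions in the special case $p = 1_\mathcal{C}$.
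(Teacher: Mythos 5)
Your proof is correct and follows essentially the same route as the paper: specialize the proposition identifying $\mathbb{T}_\mathcal{C}$-coalgebra structures with $p$-coverages to $p=1_\mathcal{C}$, and use the lemma that every filter is $1_\mathcal{C}$-saturated to collapse $1_\mathcal{C}$-coverages to ordinary coverages. The paper's own proof merely re-verifies the comultiplication square in this special case (via the observation that $\lext_J[J(c)]$ and $\mathcal{F}_{(c,J(c))}$ have the same image under $\lext_{\pi_{(\mathcal{C},1_\mathcal{C})}}$, hence the same saturation), which is the same computation already contained in the proposition you cite, so nothing is missing from your argument.
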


\begin{proof}
    Let $ J$ be a coverage on $\mathcal{C}$; then it is automatically $1_\mathcal{C}$-saturated. Moreover, from our previous consideration on the comonad $ \mathbb{S}$, $J$ defines a section of $\pi_{(\mathcal{C}, 1_\mathcal{C})}$. Moreover we know that for any $c$, one has $ \mathcal{F}_{(c,J(c)} \leq \lext_J[J(c)]$ even before saturation, hence after saturation; it suffices to show they have the same saturation: but both have the same reduce
    \[ \lext_{\pi_{(\mathcal{C},1_\mathcal{C})}}[\lext_J[J(c)]] = J(c) = \lext_{\pi_{(\mathcal{C},1_\mathcal{C})}}[\mathcal{F}_{(c,J(c))}] \]
   whence a commutation
\[\begin{tikzcd}
	{(\mathcal{C},1_{\mathcal{C}})} & {\mathbb{T}(\mathcal{C},1_{\mathcal{C}})} \\
	{\mathbb{T}(\mathcal{C},1_{\mathcal{C}})} & {\mathbb{T}\mathbb{T}(\mathcal{C},1_{\mathcal{C}})}
	\arrow["J", from=1-1, to=1-2]
	\arrow[""{name=0, anchor=center, inner sep=0}, "J"', from=1-1, to=2-1]
	\arrow[""{name=1, anchor=center, inner sep=0}, "{ \delta_{(\mathcal{C},1_{\mathcal{C}})}}", from=1-2, to=2-2]
	\arrow["{\lext_J}"', from=2-1, to=2-2]
	\arrow["{=}"{description}, draw=none, from=0, to=1]
\end{tikzcd}\]
\end{proof}

\section*{Acknowledgement}

The second author is grateful to Nathanael Arkor for the reference \cite{paregrandismultiple} concerning double categories of algebras. 

\printbibliography

@article{guitart1980carrésexact,
  title={Relations et carr{\'e}s exacts},
  author={Guitart, Ren{\'e}},
  journal={Ann. Sci. Math. Qu{\'e}bec},
  volume={4},
  number={2},
  pages={103--125},
  year={1980}
}

@article{adamek2001laxepi,
  title={On functors which are lax epimorphisms},
  author={Adamek, Jiri and Bashir, Robert El and Sobral, Manuela and Velebil, Jiri},
  year={2001},
  publisher={Centro de Matem{\'a}tica da Universidade de Coimbra}
}

@book{elephant,
author = {Johnstone, Peter Tenant},
title = { Sketches of an Elephant: A Topos Theory Compendium },
volume = {1},
year = {2002},
publisher = {Oxford University press},
series = {Oxford logic guides}
}

@book{maclane&moerdijk,
  title={Sheaves in geometry and logic: A first introduction to topos theory},
  author={MacLane, Saunders and Moerdijk, Ieke},
  year={2012},
  publisher={Springer Science \& Business Media}
}

@misc{caramello2020denseness,
      title={Denseness conditions, morphisms and equivalences of toposes}, 
      author={Olivia Caramello},
      year={2020},
      eprint={1906.08737},
      archivePrefix={arXiv},
      primaryClass={math.CT}
}

@article{marmolejo1999distributive,
  title={Distributive laws for pseudomonads},
  author={Marmolejo, Francisco},
  journal={Theory and Applications of Categories},
  volume={5},
  number={5},
  pages={91--147},
  year={1999},
  publisher={Citeseer}
}

@misc{paregrandismultiple,
  title={Multiple categories of generalized quintets},
  author={Grandis, Marco and Par{\'e}, Robert}
}

@article{pare2011yoneda,
  title={Yoneda theory for double categories},
  author={Par{\'e}, Robert},
  journal={Theory and Applications of Categories},
  volume={25},
  number={17},
  pages={436--489},
  year={2011}
}

@misc{niefield2011glueing,
      title={The Glueing Construction and Double Categories}, 
      author={Susan Niefield},
      year={2011},
      eprint={1112.1307},
      archivePrefix={arXiv},
      primaryClass={math.CT}
}

@article{shulman2012exact,
  doi = {10.48550/ARXIV.1203.4318},
  url = {https://arxiv.org/abs/1203.4318},
  author = {Shulman, Michael},
  keywords = {Category Theory (math.CT), FOS: Mathematics, FOS: Mathematics, 18B25},
  title = {Exact completions and small sheaves},
  publisher = {arXiv},
  year = {2012}
}

\end{document}